\providecommand*{\red}[1]{{{\leavevmode\color{red}#1}}}
  \newcommand{\IQ}{\ensuremath\mathds{Q}}                        % Set of rational numbers.
  \newcommand{\IR}{\ensuremath\mathds{R}}                        % Set of real numbers.
\newcommand*{\setE}{\ensuremath{\mathcal{T}}}                    % Partition of the domain.
\newcommand*{\Gammah}{\Gamma_h}                                  % Interior faces.
\newcommand*{\Nel}{{N_\mathrm{el}}}                              % Number of elements.
\newcommand*{\Nloc}{{N_\mathrm{loc}}}                            % Number of local degrees of freedom.
\renewcommand*{\vec}[1]{{\boldsymbol{#1}}}                       % Vector.
\DeclareMathAlphabet{\mathbfsf}{\encodingdefault}{\sfdefault}{bx}{n}
\newcommand*{\vecc}[1]{\mathbfsf{#1}}                            % Tensor or matrix.
\newcommand*{\transpose}[1]{{#1}^\mathrm{T}}                     % Transposition.
\newcommand*{\normal}{\vec{n}}                                   % Unit outward normal.
\newcommand*{\dd}{\mathrm{d}}                                    % Differential d for ``int dx''.
\newcommand*{\grad}{\vec{\nabla}}                                % Gradient.
\renewcommand*{\div}{\vec{\nabla}\cdot}                          % Divergence.
\newcommand*{\laplace}{\upDelta}                                 % Laplace operator. 
\newcommand*{\strain}{{\boldsymbol{\varepsilon}}}                % Deformation tensor in NS equation.
\newcommand*{\llbrace}{\lbrace\hspace*{-0.18em}\vert}
\newcommand*{\rrbrace}{\vert\hspace*{-0.18em}\rbrace}
\newcommand*{\avg}[1]{\llbrace{#1}\rrbrace}                      % Average operator.
\newcommand*{\jump}[1]{\left\llbracket{#1}\right\rrbracket}      % Jump operator.
\newcommand*{\abs}[1]{\ensuremath{|#1|}}                         % Absolute value or volume.
\newcommand*{\norm}[2]{\|#1\|_{#2}}                              % Norm.
\newcommand*{\on}[2]{\left.#1\right\vert_{#2}}                   % restriction on a set, i.e., eg u|G.
\newcommand*{\Rey}{\mathrm{Re}}                                  % Reynolds number. 
  \newcolumntype{R}{>{\raggedleft\arraybackslash}X}
  \newcolumntype{L}{>{\raggedright\arraybackslash}X}
  \newcolumntype{C}{>{\centering\arraybackslash}X}
\newtheorem{lemma}{Lemma}
\newtheorem{theorem}{Theorem}
\newtheorem{remark}{Remark}
\journal{ }
\begin{document}
%\linenumbers

\begin{frontmatter}
\title{A positivity-preserving implicit-explicit scheme with high order polynomial basis for compressible Navier--Stokes equations}
\author[label1]{Chen Liu}\ead{liu3373@purdue.edu}
\author[label1]{Xiangxiong Zhang\corref{cor1}}\ead{zhan1966@purdue.edu}
\address[label1]{Department of Mathematics, Purdue University, 150 North University Street, West Lafayette, Indiana 47907.} 

\begin{abstract}

In this paper, we are interested in constructing a scheme solving compressible Navier--Stokes equations, with desired properties including high order spatial accuracy, conservation, and positivity-preserving of density and internal energy under a standard hyperbolic type CFL constraint on the time step size, e.g., 
$\Delta t=\mathcal O(\Delta x)$. Strang splitting is used to approximate convection and diffusion operators separately. For the convection part, i.e., the compressible Euler equation,  the high order accurate postivity-preserving Runge--Kutta discontinuous Galerkin method can be used. For the diffusion part, the equation of internal energy instead of the total energy is considered, and a first order semi-implicit time discretization is used for the ease of achieving positivity.  A suitable interior penalty discontinuous Galerkin method for the stress tensor can ensure the conservation of momentum and total energy for any high order polynomial basis. In particular, positivity can be proven with $\Delta t=\mathcal{O}(\Delta x)$ if the Laplacian operator of internal energy is approximated by the $\IQ^k$ spectral element method with $k=1,2,3$. So the full scheme with $\IQ^k$ ($k=1,2,3$) basis is conservative and positivity-preserving with $\Delta t=\mathcal{O}(\Delta x)$, which is robust for demanding problems such as solutions with low density and low pressure induced by high-speed shock diffraction. 
Even though the full scheme is only first order accurate in time, numerical tests indicate that higher order polynomial basis produces much better numerical solutions, e.g., better resolution for capturing the roll-ups during shock reflection. 
\end{abstract}

\begin{keyword}
%% keywords here, in the form: keyword \sep keyword
compressible Navier--Stokes \sep discontinuous Galerkin \sep spectral element \sep implicit-explicit \sep high-order accuracy  \sep positivity-preserving
%% PACS codes here, in the form: \PACS code \sep code

%% MSC codes here, in the form: \MSC code \sep code
%% or \MSC[2008] code \sep code (2000 is the default)
\vspace{.5\baselineskip}
\MSC 35L65 \sep 65M12 \sep 65M60 \sep 65N30
\end{keyword}
\end{frontmatter}

%% The major content of the artical.
%%%%%%%%%%%%%%%%%%%%%%%%%%%%%%%%%%%%%%%%%%%%%%%%%%%%%%%%%%%%%%%%%%%%%%%%%%%%%%%%%%%%%%%%%%%%%%%%%%%%%%%%%%%%%%%%%%%%%%%
\section{Introduction}
\subsection{Motivation of positivity}
%%%%%%%%%%%%%%%%%%%%%%%%%%%%%%%%%%%%%%%%%%%%%%%%%%%%%%%%%%%%%%%%%%%%%%%%%%%%%%%%%%%%%%%%%%%%%%%%%%%%%%%%%%%%%%%%%%%%%%%
%\red{Remember to add entropy minimum principle}

%It is desired to construct robust, accurate, and efficient numerical algorithms, which poses significant challenges.

The compressible Navier--Stokes (NS) equations are one of the 
most popular and important models in gas dynamics as well as computational fluid dynamics applications.
The  equations in dimensionless form  on a bounded spatial domain $\Omega \subset \IR^d$ over the time interval $[0,T]$ are given  by:
\begin{subequations}\label{eq:CNS:model}
\begin{align}
\partial_t{\rho} + \div{(\rho\vec{u})} = 0 && \text{in}~[0,T]\times\Omega,\label{eq:CNS:model_1}\\
\partial_t{(\rho\vec{u})} + \div{(\rho\vec{u}\otimes\vec{u})} + \grad{p} - {\textstyle\frac{1}{\Rey}}\div{\vec{\tau}(\vec{u})} = \vec{0} && \text{in}~[0,T]\times\Omega,\\
\partial_t{E} + \div{((E+p) \vec{u})} + {\textstyle\frac{1}{\Rey}}\div{\vec{q}} - {\textstyle\frac{1}{\Rey}}\div{(\vec{\tau}(\vec{u})\vec{u})} = 0 && \text{in}~[0,T]\times\Omega,\label{eq:CNS:model_3}
\end{align}
\end{subequations}
where $\rho$, $\vec{u}$, $p$, and $E$ are the density, velocity, pressure, and total energy respectively, and $\Rey$ denotes the Reynolds number.
Let $\vec{m} = \rho\vec{u}$ denote the momentum, then the conservative variables are $\vec{U}=\transpose{[\rho,\vec{m},E]}$.
Assume the fluid is Newtonian, as well as the Stokes hypothesis, which states that the bulk viscosity equals to zero. 
Then the shear stress tensor is given by $\vec{\tau}(\vec{u}) = 2\strain(\vec{u}) - \frac{2}{3}(\div{\vec{u}})\vecc{I}$, where $\strain{(\vec{u})} = \frac{1}{2}(\grad{\vec{u}} + \transpose{(\grad{\vec{u}})})$ and $\vecc{I}\in\IR^{d\times d}$ is an identity matrix.
The total energy can be expressed as $E = \rho e + \frac{1}{2}\rho\norm{\vec{u}}{}^2$, where $e$ denotes the internal energy.
For simplicity, we consider the ideal gas equation of state $p = (\gamma-1) \rho e$, with parameter $\gamma > 0$ where $\gamma=1.4$ for air.
With the Fourier's heat conduction law, the heat flux $\vec{q}$ is defined by $\vec{q} = -\lambda\grad{e}$, where parameter $\lambda = \frac{\gamma}{\Pr}>0$ and $\Pr$ denotes the Prandtl number.
\par
Physically meaningful solutions $\vec{U}$ should have positive density and positive internal energy. 
Define the set of admissible states as:
\begin{align*}
G = \{\vec{U}=\transpose{[\rho, \vec{m}, E]}\!:~\rho>0,~ \rho e(\vec{U})=E-\frac{\|\vec m\|^2}{2\rho} > 0\}.
\end{align*}
The set $G$ is convex and the $\rho e$ is a concave function with respect to $\vec{U}$, see \cite{zhang2010positivity}.  
With an initial condition $\vec{U}_0 = \transpose{[\rho_0,\vec{m}_0,E_0]} \in G$, it is a wide open question whether the solution of compressible NS equations \eqref{eq:CNS:model} should have positive density and internal energy for a given positive initial data, though it is partially justified for special systems, e.g., see \cite{LAI2022555, liu2021brinkman} and the references therein.
On the other hand, empirically we would expect a reasonable numerical solution to this initial value problem should belong to the set $G$ for any time $t>0$. 
\par
In general, classical numerical methods for a convection-diffusion system like \eqref{eq:CNS:model} are not positivity-preserving without any limiters.  In practice, one often observes blow-ups once negative density or negative pressure (corresponding to negative internal energy) is generated during numerical simulations.  
The linearized compressible Euler equations with negative density or negative internal energy will no longer be hyperbolic thus its initial value problem becomes ill-posed \cite{zhang2010positivity}.
When negative values emerge, the simple ad-hoc approach of truncating negative values to zero destroys conservation, which is equivalent to adding mass or internal energy into a conservative system, thus the computation will eventually still blow up.
Therefore for the sake of robustness, it is desired to construct a numerical scheme which is both conservative and  positivity-preserving.

\subsection{Existing positivity-preserving schemes for compressible Navier--Stokes equations}\label{sec-approaches}

\par
In the literature there are many different methods to construct positivity-preserving schemes for compressible Euler equations. However, it is much more difficult to construct a conservative and positive scheme for the compressible NS equations in multiple dimensions due to the mixed second order derivatives in the diffusion operator. In the past decade, significant progress of practical conservative and positive schemes has been made for the fully nonlinear compressible NS equations \eqref{eq:CNS:model}. Notable efforts include at least the following three different kinds of schemes.
\par
The first approach proposed by Grapas et al. in \cite{grapsas2016unconditionally} is to solve the internal energy equation directly 
instead of solving the total energy equation \eqref{eq:CNS:model_3}. 
By solving the internal energy equation, preserving positivity of internal energy becomes simpler but conservation of total energy becomes difficult.
The fully implicit pressure correction scheme on staggered grids in \cite{grapsas2016unconditionally} can be proven unconditionally stable, positivity-preserving and conservative. Nonlinear equations must be solved in the implementation. The spatial accuracy of this approach is at most second order accurate and it seems difficult to extend it to higher order spatial accuracy especially for a fully implicit scheme on a staggered grid. 
\par
The second approach is a fully explicit scheme proposed by the second author  in \cite{zhang2017positivity}. By solving the conservative system \eqref{eq:CNS:model}, conservation is straightforward to achieve but positivity of internal energy is difficult to enforce.
With a simple nonlinear diffusion numerical flux, it was proven in \cite{zhang2017positivity} that 
arbitrarily high order Runge--Kutta discontinuous Galerkin (DG) schemes solving \eqref{eq:CNS:model} can be rendered positivity-preserving without losing conservation and accuracy by a simple limiter,
which can be regarded as an easy extension of the Zhang--Shu method  for conservation laws in \cite{zhang2010maximum, zhang2010positivity, zhang2012maximum} to the compressible NS equations. 
The advantages of such a fully explicit approach include easy extensions to  general shear stress models and heat fluxes, and possible extensions to other type of schemes such as high order accurate finite volume schemes \cite {fan2021positivity} and the high order accurate finite difference WENO (weighted essentially nonoscillatory) scheme \cite{fan2022positivity}. 
However, the major drawback of any fully explicit scheme for the convection diffusion system \eqref{eq:CNS:model} in \cite{zhang2017positivity, fan2021positivity, fan2022positivity} is a time step constraint like $\Delta t = \mathcal{O}(\Rey\,\Delta x^2)$, which is suitable and practical only for high Reynolds number problems.  
\par
The third approach proposed by Guermond et al. in \cite{guermond2021second} introduces a semi-implicit continuous finite element scheme with positivity-preserving property under standard hyperbolic CFL condition like $\mathcal{O}(\Delta x)$. 
By applying the Strang splitting to the compressible NS model \cite{demkowicz1990new}, the equations \eqref{eq:CNS:model} are splitted into a hyperbolic subproblem $(\mathrm{H})$ and a parabolic subproblem $(\mathrm{P})$, which represent two asymptotic regimes, namely the vanishing viscosity limit, i.e., the compressible Euler equations, and the dominant of diffusive terms.
The definition of these subproblems is as follows: 
\begin{align}
(\mathrm{H})~\!
\begin{cases}
\partial_t{\rho} + \div{(\rho\vec{u})} = 0\\
\partial_t(\rho\vec{\vec{u}}) + \div{(\rho\vec{u}\otimes\vec{u} + p\vecc{I})} = \vec{0}\\
\partial_t{E} + \div{((E+p)\vec{u})} = 0
\end{cases},
&&
(\mathrm{P})~\!
\begin{cases}
\partial_t{\rho} = 0\\
\partial_t(\rho\vec{u}) - \frac{1}{\Rey}\div{\vec{\tau}(\vec{u})} = \vec{0}\\
\partial_t{E} + \frac{1}{\Rey}\div{(\vec{q} - \vec{\tau}(\vec{u})\vec{u})} = 0
\end{cases}.
\label{strang-splitting}
\end{align}
The first equation in $(\mathrm{P})$ implies variable $\rho$ in parabolic subproblem is time independent. Multiply the second equation in $(\mathrm{P})$ by $\vec{u}$, use the heat flux $\vec{q}=-\lambda\grad{e}$ and the identity 
$\div{(\vec{\tau}(\vec{u})\vec{u})} = (\div{\vec{\tau}(\vec{u})})\cdot\vec{u} + \vec{\tau}(\vec{u}):\grad{\vec{u}}$, 
we obtain the equivalent non-conservative form of equations for $(\mathrm{P})$: 
\begin{subnumcases}{(\mathrm{P})~\label{eqn-P-nonconservative}}
\partial_t{\rho} = 0, \label{eqn-internal-rho}\\
\rho\partial_t\vec{\vec{u}} - {\textstyle\frac{1}{\Rey}}\div{\vec{\tau}(\vec{u})} = \vec{0}, \label{eqn-internal-u}\\
\rho\partial_t{e} -{\textstyle\frac{\lambda}{\Rey}}\laplace{e} = {\textstyle\frac{1}{\Rey}}\vec{\tau}(\vec{u}):\grad{\vec{u}}. \label{eqn-internal-e}
\end{subnumcases}
In \cite{guermond2021second}, a semi-implicit time discretization is used for the internal energy equation \eqref{eqn-internal-e} such that only a linear system 
needs to be solved for implementing the scheme, without affecting the conservation of momentum and total energy.
The positivity of internal energy in piecewise linear finite element method can also be easily proven due to the well-known fact that piecewise linear methods can form an M-matrix for the Laplacian operator. 

\subsection{Motivation and difficulty of high order spatial accuracy in implicit schemes}
\par Even though schemes constructed from high order polynomials are high order accurate on a uniform or quasi-uniform mesh only for smooth solutions,
they produce less artificial viscosity thus resolve small scale structures  better than first order and second order schemes even for the gas dynamics problems involving with strong shocks, see examples in \cite{zhang2017positivity, fan2022positivity}. In other words, less artificial viscosity is the main motivation of pursuing a high order scheme, e.g., DG methods with polynomial basis of degree at least two. 
\par 
To see the key challenge in constructing a positivity-preserving high order scheme for compressible NS equations, we consider the heat equation $\partial_t e = \partial_{xx}e$ with homogeneious Dirichlet boundary conditions as a simplification of equation \eqref{eqn-internal-e}. The simple second order centered difference $\partial_{xx}e \approx \frac{e_{i-1}-2e_i+
e_{i+1}}{\Delta x^2}$ is monotone with both explicit and implicit time stepping. With forward Euler time stepping, the scheme 
\begin{align*}
e_{i}^{n+1} = e_i^n+\Delta t\frac{e^n_{i-1}-2e^n_i+e^n_{i+1}}{\Delta x^2} 
= \frac{\Delta t}{\Delta x^2} e^n_{i-1}+\Big(1-2\frac{\Delta t}{\Delta x^2} \Big)e^n_i+\frac{\Delta t}{\Delta x^2} e^n_{i+1} 
\end{align*}
is monotone in the sense that $e_{i}^{n+1}$ is a convex combination of $e_{i}^{n}$ and $e_{i\pm 1}^{n}$ if $\frac{\Delta t}{\Delta x^2} \leq \frac12$. Such monotonicity is in general not true for high order schemes, but some explicit high order schemes in \cite{zhang2012maximum-liu, yan2014maximum,chen2016third,   srinivasan2018positivity, sun2018discontinuous, li2018high} were shown to have weak monotonicity for the parabolic equations, which means that the cell averages can still be a monotone function. In principle, all these explicit schemes can be applied to \eqref{eqn-internal-e} for constructing a positivity-preserving scheme for \eqref{eq:CNS:model} but under a small time step constraint $\Delta t = \mathcal{O}(\Rey\,\Delta x^2)$. 
\par
With backward Euler time stepping, the scheme 
\[e_{i}^{n+1}=e_i^n+\Delta t\frac{e^{n+1}_{i-1}-2e^{n+1}_i+e^{n+1}_{i+1}}{\Delta x^2} \]
gives a linear system $\vecc{A} \mathbf e^{n+1}=\mathbf e^{n}$, where $\vecc{A}$ is a tridiagonal matrix with $\lambda=\frac{\Delta t}{\Delta x^2}$,
\begin{align*}
\vecc{A} = 
\begin{pmatrix}
1+2\lambda & -\lambda & & & \\
-\lambda & 1+2\lambda & -\lambda & & \\
 & \ddots & \ddots & \ddots & \\
 & & -\lambda & 1+2\lambda & -\lambda \\
 & & & -\lambda & 1+2\lambda
\end{pmatrix}.
\end{align*}
This implicit scheme is monotone because $\vecc{A}^{-1}$ has nonnegative entries thus one can also show $e^{n+1}_i$ is a convex combination of $e^n_j$ for all $j$ without any time step constraint. The matrix $\vecc{A}$ is diagonally dominant with non-positive off diagonal entries, so $\vecc{A}$ is an M-matrix \cite{plemmons1977m} thus $\vecc{A}^{-1}\geq 0$.  It is well-known that the monotonicity in implicit schemes holds in piecewise linear finite element method, e.g., \cite{guermond2021second}. In general, the monotonicity is not true in implicit high order schemes, e.g., the continuous finite element method with quadratic polynomials cannot be monotone on unstructured meshes \cite{hohn1981some}.
However, it is possible to show that  continuous finite element method with quadratic and cubic polynomial basis can still be monotone on a uniform rectangular mesh under practical time step and mesh constraints
\cite{li2020monotonicity, cross2020monotonicity}. 

\subsection{The main results}

\par In this paper, we are interested in constructing a conservative and positivity-preserving scheme which is high order accurate for spatial variables, without a restrictive time step constraint such as $\Delta t= \mathcal{O}(\Rey\,\Delta x^2)$. 
For problems involved with low density and low pressure, loss of positivity is the main source of instabilities of high order schemes.
In order to avoid small time steps like $\Delta{t} = \mathcal{O}(\Rey\,\Delta x^2)$, we follow the third approach in Section~\ref{sec-approaches} by solving the non-conservative form of diffusion equations \eqref{eqn-P-nonconservative}.
\par
We will mainly consider the high order DG methods, which have a lot of advantages and have been successful in many scientific and industrial applications.
In particular, high order DG methods have been quite popular for the compressible NS equations since the pioneering work in \cite{bassi1997high}. For the sake of easy extensions to arbitrarily high order polynomial basis, we use the positivity-preserving Runge--Kutta DG method for the compressible Euler equations \cite{zhang2010positivity, zhang2012maximum,  zhang2017positivity} for solving the hyperbolic subproblem (H) in \eqref{strang-splitting}. 
\par
For shear stress tensor terms $ \div{\vec{\tau}(\vec{u})}$ and $\vec{\tau}(\vec{u}):\grad{\vec{u}}$ in the parabolic subproblem (P) in \eqref{eqn-P-nonconservative}, we will also use a DG method. In the literature, many different types of DG methods have been developed for solving diffusion equations, including local DG \cite{cockburn1998local,castillo2000priori}, compact DG \cite{peraire2008compact,uranga2009implicit}, direct DG \cite{liu2010direct,zhang2012fourier,liu2015optimal}, hybridizable DG \cite{cockburn2009hybridizable, peraire2010hybridizable,nguyen2011implicit}, interior penalty DG (IPDG) \cite{Rivierebook,riviere1999improved,riviere2001priori,masri2022discontinuous}, weak Galerkin methods \cite{wang2013weak,wang2016weak}, and many others \cite{arnold2002unified,shu2014discontinuous}.  
In particular, we will use the IPDG method since the global conservation of momentum and total energy can be easily achieved via a proper choice of IPDG discretizations for approximating $ \div{\vec{\tau}(\vec{u})}$ 
and $\vec{\tau}(\vec{u}):\grad{\vec{u}}$. 
\par
In order to achieve positivity of internal energy for solving equation \eqref{eqn-internal-e}, we can utilize either IPDG with $\IQ^1$ element or spectral element method with $\IQ^2$ or $\IQ^3$ element on uniform rectangular meshes for the Laplace operator $-\laplace{e}$. The monotonicity of spectral element method with $\IQ^2$ and $\IQ^3$ element for Laplacian has recently been proven in \cite{li2020monotonicity, cross2020monotonicity}. 
\par
To summarize, our numerical scheme for solving \eqref{eq:CNS:model} consists of the following main ingredients:
\begin{enumerate}
\item With Strang splitting, the compressible Euler equations, i.e., the hyperbolic subproblem in \eqref{strang-splitting} and parabolic subproblem \eqref{eqn-P-nonconservative} are solved separately. The compressible Euler equations are solved by the positivity-preserving Runge--Kutta DG method with $\IQ^k$ element on rectangular meshes \cite{zhang2010positivity}.
\item The time stepping for the parabolic subproblem consists of Crank--Nicolson method to \eqref{eqn-internal-u} and a first order semi-implicit time discretization to \eqref{eqn-internal-e}. When a proper IPDG method is used for $\div{\vec{\tau}(\vec{u})}$ and $\vec{\tau}(\vec{u}):\grad{\vec{u}}$, global conservation of momentum and total energy is ensured. 
\item The diffusion term $-\laplace{e}$ is treated implicitly. We will prove positivity of IPDG method with $\IQ^1$ element. For positivity of higher order elements, we use the spectral element method with $\IQ^2$ and $\IQ^3$ element (i.e., continuous finite element method with Gauss--Labotto quadrature), for which monotonicity has been proven in \cite{li2020monotonicity, cross2020monotonicity}.
We emphasize that no limiters are used at all in the fully discretized scheme for solving the parabolic subproblem.
\end{enumerate}
\par
So the overall scheme is at most first order accurate in time for the system \eqref{eq:CNS:model} but fourth order accurate in space when $\IQ^3$ element is used. At first glance, the high order spatial accuracy may not look necessary since the order of time accuracy is low. However, empirically the spatial resolution is more important than the temporal for many fluid dynamics problems. In particular, computational evidence often suggests that a spatially higher order accurate scheme can produce better solutions even if the temporal order of accuracy is low. For instance, see Figure~\ref{fig:shock_reflection_diffraction} for results of our schemes solving a Mach $10$ shock reflection-diffraction problem, which involves strong shock, very low density and pressure, as well as Kelvin--Helmholtz instability. In Figure~\ref{fig:shock_reflection_diffraction}, the  $\IQ^3$ scheme with less degrees of freedom  can better capture the instability roll-ups than the $\IQ^1$ scheme, even though both schemes are first order accurate in time for the internal energy equation \eqref{eqn-internal-e}. See also the numerical examples for the superiority of $\IQ^2$ element over $\IQ^1$ element for scalar convection-diffusion problems in \cite{hu2021positivity, shen2021discrete, liu-2022-monotone}. 

\begin{figure}[ht!]
\begin{center}
\begin{tabularx}{0.8\linewidth}{@{~}C@{~}C@{~}c@{~}}
\includegraphics[width=0.33\textwidth]{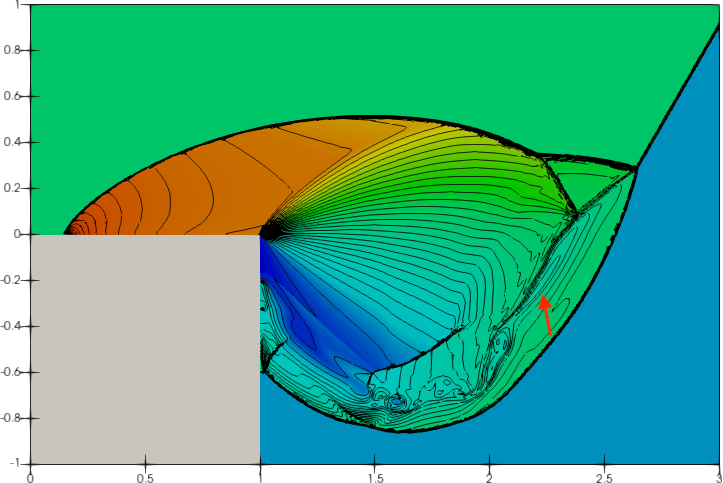} &
\includegraphics[width=0.33\textwidth]{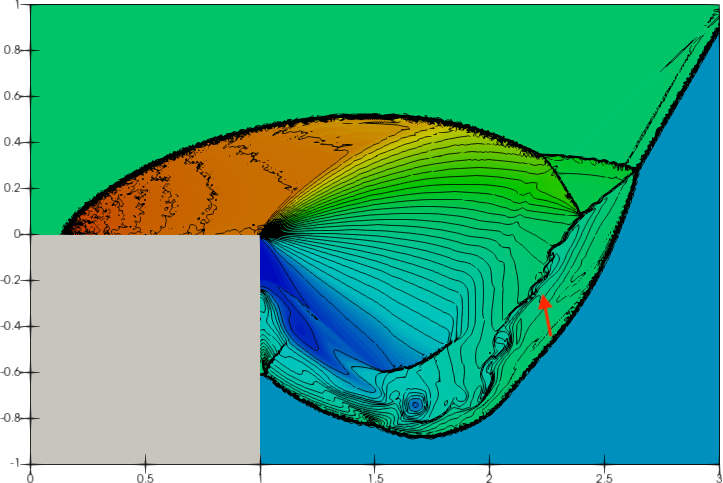} &
\includegraphics[width=0.0435\textwidth]{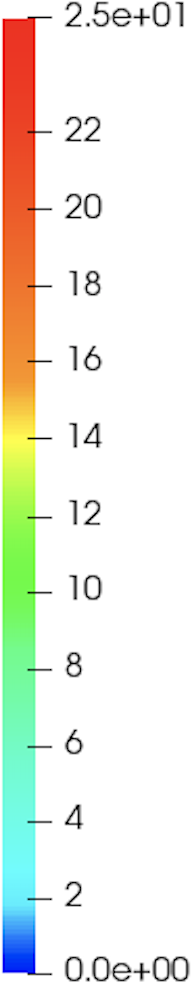}
\end{tabularx}
\caption{Mach $10$ shock reflection and diffraction with Reynolds number $1000$. Plot of density: 50 equally space contour lines from $0$ to $25$. Left snapshot from $\IQ^1$ scheme in this paper on a uniform mesh with mesh resolution $1/480$. Right snapshot from $\IQ^3$ scheme in this paper on a uniform mesh with mesh resolution $1/120$.}
\label{fig:shock_reflection_diffraction}
\end{center}
\end{figure}

\subsection{Contributions and organization of this paper}
\par 
To the best of our knowledge, this is the first time that an implicit conservative positivity-preserving scheme with high order elements like $\IQ^2$ and $\IQ^3$ elements is constructed for the compressible NS equations. Morever, numerical tests suggest that the $\IQ^3$ scheme is indeed robust with much better resolutions. 
\par
It is in general nontrivial to achieve global conservation when solving equations of the non-conservative form \eqref{eqn-P-nonconservative}. 
Even though we only consider rectangular meshes in this paper, the global conservation of IPDG methods for the parabolic subproblem \eqref{eqn-P-nonconservative}  can be easily extended to unstructured meshes. There are many variants of IPDG methods, including the
symmetric version (SIPG), the nonsymmetric version (NIPG), and the incomplete version (IIPG). In particular, we prove that the global conservation can be achieved if the shear stress tensor terms are discretized by the NIPG method.  
\par
We also prove that the second order accurate IIPG method with $\IQ^1$ element for the Laplacian term $-\laplace{e}$ forms an M-matrix. 
Even though it is well known that it is possible to achieve an M-matrix structure when using piecewise linear finite element method,
to the best of our knowledge this is the first time that such an M-matrix structure is proven among the family of IPDG methods beyond one dimension.   
\par
The rest of this paper is organized as follows. In Section~\ref{sec:numerical_scheme}, we introduce the fully discrete numerical scheme and discuss the conservation property. In Section~\ref{sec:positivity}, we discuss the positivity-preserving property. In particular we prove that the IPDG method with $\IQ^1$ element forms an M-matrix thus is monotone in \ref{sec:positivity:parabolic}.
Numerical tests are shown in Section~\ref{sec:experiments}. Concluding remarks are given in Section \ref{sec-remark}.

%%%%%%%%%%%%%%%%%%%%%%%%%%%%%%%%%%%%%%%%%%%%%%%%%%%%%%%%%%%%%%%%%%%%%%%%%%%%%%%%%%%%%%%%%%%%%%%%%%%%%%%%%%%%%%%%%%%%%%%
\section{The full numerical scheme}\label{sec:numerical_scheme}
%%%%%%%%%%%%%%%%%%%%%%%%%%%%%%%%%%%%%%%%%%%%%%%%%%%%%%%%%%%%%%%%%%%%%%%%%%%%%%%%%%%%%%%%%%%%%%%%%%%%%%%%%%%%%%%%%%%%%%%
In this section, we describe the fully discretized numerical scheme for solving the compressible NS equations \eqref{eq:CNS:model} that utilizes DG discretization in space within the Strang splitting framework. Then we show that our method preserves the global conservation. 

\subsection{Time discretization}

\par Given the conserved variables $\vec{U}^n$ at time step $t_n$ ($n\geq0$), the Strang splitting for evolving to time step $t_{n+1}=t_n+\Delta t$ for the system \eqref{eq:CNS:model} is to solve $(\mathrm{P})$ and $(\mathrm{H})$ in \eqref{strang-splitting} separately:
\begin{align}
\vec{U}^n
\xrightarrow[\text{step size}~\frac{\Delta t}{2}]{\text{solve}~(\mathrm{H})} \vec{U}^\mathrm{H}
\xrightarrow[\text{step size}~\Delta t]{\text{solve}~(\mathrm{P})} \vec{U}^\mathrm{P}
\xrightarrow[\text{step size}~\frac{\Delta t}{2}]{\text{solve}~(\mathrm{H})} \vec{U}^{n+1}.
\label{algorithm-splitting}
\end{align}
Define the advection flux as 
\begin{align*}
\vec{F}^\mathrm{a} = \transpose{[\rho\vec{u}, \rho\vec{u}\otimes\vec{u} + p\vecc{I}, (E+p)\vec{u}]}.
\end{align*}
For any $n\geq 0$, the time discretization methods in one time step of Strang splitting consists of the following steps: 
\begin{itemize}[leftmargin=0.5cm]
\item[] Step~1. Given $\vec{U}^n=\transpose{[\rho^n,\vec{m}^n,E^n]}$, 
we use the third order strong stability preserving (SSP) Runge--Kutta method \cite{shu1988total}  to obtain
$\vec{U}^{\mathrm{H}}=\transpose{[\rho^{\mathrm{H}}, \vec{m}^{\mathrm{H}}, E^{\mathrm{H}}]}$ in the first step in Strang splitting \eqref{algorithm-splitting},
\begin{subequations}\label{rk3}
\begin{align}
\vec{U}^{\mathrm{(1)}} &= \vec{U}^n - \frac{\Delta t}{2}\div{\vec{F}^\mathrm{a}(\vec{U}^n)},\\
\vec{U}^{\mathrm{(2)}} &= \frac{3}{4} \vec{U}^{n}+\frac{1}{4}\Big[\vec{U}^{(1)} - \frac{\Delta t}{2}\div{\vec{F}^\mathrm{a}(\vec{U}^{(1)})}\Big],\\
\vec{U}^{\mathrm{H}} &= \frac{1}{3} \vec{U}^{n}+\frac{2}{3}\Big[\vec{U}^{(2)} - \frac{\Delta t}{2}\div{\vec{F}^\mathrm{a}(\vec{U}^{(2)})}\Big].
\end{align}
\end{subequations}
\item[] Step~2. Given $\vec{U}^{\mathrm{H}} = \transpose{[\rho^{\mathrm{H}}, \vec{m}^{\mathrm{H}}, E^{\mathrm{H}}]}$, compute $(\vec{u}^\mathrm{H}, e^\mathrm{H})$ by solving
\begin{align*}
\vec{m}^\mathrm{H} = \rho^\mathrm{H} \vec{u}^\mathrm{H} \quad\text{and}\quad
E^\mathrm{H} = \rho^\mathrm{H}e^\mathrm{H} + \frac{1}{2}\rho^\mathrm{H}\norm{\vec{u}^{\mathrm{H}}}{}^2.
\end{align*}
\item[] Step~3.
Notice that equation \eqref{eqn-internal-rho} implies that $\rho^{\mathrm{P}}=\rho^{\mathrm{H}}$ in the second step in Strang splitting 
\eqref{algorithm-splitting}.
Apply the second order Crank--Nicolson method to \eqref{eqn-internal-u} and a first order semi-implicit time discretization to \eqref{eqn-internal-e},
\begin{align*}
&\vec{u}^\ast=\frac{1}{2} \vec{u}^\mathrm{P}+\frac12 \vec{u}^\mathrm{H},\\
&\rho^{\mathrm{P}}\frac{\vec{u}^\mathrm{P}-\vec{u}^\mathrm{H}}{\Delta t} - \frac{1}{\Rey}\div{\vec{\tau}(\vec{u}^\ast)} = \vec{0}, \\
&\rho^{\mathrm{P}} \frac{e^\mathrm{P}-e^\mathrm{H}}{\Delta t} -\frac{1}{\Rey}\vec{\tau}(\vec{u}^\ast):\grad{\vec{u}^\ast}= \frac{\lambda}{\Rey}\laplace{e}^\mathrm{P}, 
\end{align*}
which can be implemented as first solving two decoupled linear systems for $\vec{u}^{\ast}$ and  $e^{\mathrm{P}}$ \begin{subequations}
\begin{align}
\rho^{\mathrm{P}}\vec{u}^\ast - \frac{\Delta t}{2\Rey}\div{\vec{\tau}(\vec{u}^\ast)} &= \rho^{\mathrm{H}}\vec{u}^{\mathrm{H}},\label{eq:time:step3_1}\\
\rho^{\mathrm{P}} e^\mathrm{P} - \frac{{\Delta t}\,\lambda}{\Rey}\laplace{e}^\mathrm{P} &= \rho^{\mathrm{H}}e^{\mathrm{H}} + \frac{\Delta t}{\Rey}\vec{\tau}(\vec{u}^\ast):\grad{\vec{u}^\ast},\label{eq:time:step3_2}
\end{align}
\end{subequations}
then setting $\vec{u}^\mathrm{P} = 2\vec{u}^\ast - \vec{u}^\mathrm{H}$.
\item[] Step~4. Given $(\rho^{\mathrm{P}}, \vec{u}^{\mathrm{P}}, e^{\mathrm{P}})$, compute $(\vec{m}^\mathrm{P}, E^\mathrm{P})$ by
\begin{align*}
\vec{m}^\mathrm{P} = \rho^\mathrm{P}\vec{\vec{u}}^\mathrm{P}
\quad\text{and}\quad
E^\mathrm{P} = \rho^\mathrm{P}e^\mathrm{P} + \frac{1}{2}\rho^\mathrm{P}\norm{\vec{u}^{\mathrm{P}}}{}^2.
\end{align*}
\item[] Step~5. Given $\vec{U}^{\mathrm{P}}=\transpose{[\rho^{\mathrm{P}}, \vec{\vec{m}}^\mathrm{P}, E^{\mathrm{P}}]}$, to obtain $\vec{U}^{n+1}=\transpose{[\rho^{n+1},\vec{m}^{n+1},E^{n+1}]}$ in \eqref{algorithm-splitting}, solve $(\mathrm{H})$ for another $\frac12\Delta t$ by the third order SSP Runge--Kutta.
\end{itemize}

\subsection{Space discretization}
Let $\setE_h$ be a polygonal mesh of the computational domain $\Omega$, where each element $K$ is a square in two dimension and degenerates to an interval in one dimension. 
Let $h$ denote the mesh size, namely the diagonal length of a square element in two dimension and the interval length in one dimension.
\par
Let $\IQ^k(K)$ be the space of tensor product of one-dimensional polynomials of degree $k$ on an element $K$. 
Define the following discontinuous polynomial spaces: 
\begin{align*}
M_h^k &= \big\{\chi_h\in L^2(\Omega):~\forall K \in \setE_h,\, \on{\chi_h}{K} \in \IQ^k(K) \big\}, \\
\mathbf{X}_h^k &= \big\{\vec{\theta}_h \in L^2(\Omega)^d:~\forall K \in \setE_h,\, \on{\vec{\theta}_h}{K} \in \IQ^k(K)^d \big\}.
\end{align*}
We first briefly review the Runge--Kutta DG scheme for Euler equations, then we describe the IPDG scheme for the parabolic subproblem.
\paragraph{\bf Hyperbolic subproblem}
For solving $(\mathrm{H})$, we utilize the same scheme as described in \cite{zhang2010positivity}, in which a simple limiter can preserve positivity without destroying conservation and accuracy in high order DG methods. The positivity-preserving property will be reviewed in Section~\ref{sec:positivity}. Here we briefly review the scheme.
\par The semi-discrete DG scheme on an element $K$  for the compressible Euler equations $\partial_t\vec{U} + \div{\vec{F}^{\mathrm{a}}(\vec{U})} = \vec{0}$ is defined by
finding the piecewise polynomial solution $\vec{U}_h$ satisfying
\begin{align}\label{eq:hyper_space_dis}
\frac{\dd}{\dd t}\int_K \vec{U}_h  \varPsi_h 
=  \int_K \vec{F}^{\mathrm{a}}(\vec{U}_h)\cdot \grad{\varPsi_h} - \int_{\partial{K}} \widehat{\vec{F}^{\mathrm{a}}\cdot\normal_K}(\vec{U}_h^{-},\vec{U}_h^{+})\varPsi_h,
\end{align}
for any piecewise polynomial test function $\varPsi_h$ on any element $K$, 
where $\normal_{K}$ is the unit outward normal of $K$ and the $\widehat{\vec{F}^\mathrm{a}\cdot\normal_K}$ is a Lax--Friedrichs flux for $\vec{F}^{\mathrm{a}}$. On a face or an edge $e\subset \partial K$, the local Lax--Friedrichs flux is defined by
\begin{align*}
\widehat{\vec{F}^\mathrm{a}\cdot\normal_K}(\vec{U}_h^-,\vec{U}_h^+) 
= \frac{\vec{F}^{\mathrm{a}}(\vec{U}_h^-) + \vec{F}^{\mathrm{a}}(\vec{U}_h^+)}{2}\cdot\normal_K 
- \frac{\alpha_e}{2}(\vec{U}_h^+ - \vec{U}_h^-),
\end{align*}
where  the $\vec{U}_h^-$ (resp. $\vec{U}_h^+$) denotes the trace of a function $\vec{U}_h$ on the face $\partial{K}$ coming from the interior (resp. exterior) of $K$. 
Here, $\alpha_e$ denotes the maximum wave speed with maximum taken over all $\vec{U}_h^-$ and $\vec{U}_h^+$ along the face or edge $e$, i.e., the largest magnitude of the eigenvalues of the Jacobian matrix $\frac{\partial \vec{F}^\mathrm{a}}{\partial \vec{U}}$, which equals to the wave speed $\abs{\vec{u}\cdot\normal_K}+\sqrt{\gamma\frac{p}{\rho}}$ for ideal gas equation of state. 

By convention, we replace $\vec{U}_h^+$ by an appropriate boundary function which realizes the boundary conditions when $\partial{K}\cap\partial{\Omega}\neq\emptyset$. For instance, if purely inflow condition $\vec{U} = \vec{U}_\mathrm{D}$ is imposed on $\partial{K}$, then $\vec{U}_h^+$ is replaced by $\vec{U}_\mathrm{D}$; if purely outflow condition is imposed on $\partial{K}$, then set $\vec{U}_h^+=\vec{U}_h^-$; and if reflective boundary condition for fluid--solid interfaces is imposed on $\partial{K}$, then set $\vec{U}_h^+ = \transpose{[\rho_h^-,\vec{m}_h^{-}-2(\vec{m}_h^-\cdot\normal_K)\normal_K,E_h^-]}$.

\paragraph{\bf Parabolic subproblem}
We use the IPDG method for discretizing $\mathrm{(P)}$.
For convenience of introducing discrete forms in parabolic subproblem, we partition the boundary of the domain $\Omega$ into the union of two disjoint sets, namely $\partial{\Omega} = \partial{\Omega}_\mathrm{D} \cup \partial{\Omega}_\mathrm{N}$, where the Dirichlet boundary conditions ($\vec{u}=\vec{u}_\mathrm{D}$ and $e=e_\mathrm{D}$) are applied on $\partial{\Omega}_\mathrm{D}$ and the Neumann-type boundary conditions ($\vec{\tau}(\vec{u})\cdot\normal=\vec{0}$ and $\grad{e}\cdot\normal=0$) are applied on $\partial{\Omega}_\mathrm{N}$. Here, $\normal$ denotes the unit outer normal of domain $\Omega$.
\par
Let $\Gammah$ denote the set of interior faces. For each interior face $e \in \Gammah$ shared by elements $K_{i^-}$ and $K_{i^+}$, with $i^- < i^+$, we define a unit normal vector $\normal_e$ that points from $K_{i^-}$ into $K_{i^+}$. For a boundary face $e$, i.\,e., $e = \partial K_{i^-} \cap \partial\Omega$, the normal $\normal_e$ is taken to be the unit outward vector to $\partial\Omega$. We define the broken Sobolev spaces, for any $r\geq1$,
\begin{align*}
H^r(\setE_h) = \{\omega\in L^2(\Omega):~ \forall K\in\setE_h,\, \on{\omega}{K}\in H^r(K)\}.
\end{align*}
The average and jump operators of any scalar quantity $\omega\in H^1(\setE_h)$ are defined for each interior face $e\in\Gammah$ by
\begin{align*}
\on{\avg{\omega}}{e} = \frac{1}{2}\on{\omega}{K_{i^-}} + \frac{1}{2}\on{\omega}{K_{i^+}}, \quad
\on{\jump{\omega}}{e} = \on{\omega}{K_{i^-}} - \on{\omega}{K_{i^+}}, \quad 
e = \partial K_{i^-} \cap \partial K_{i^+}.
\end{align*}
If a face $e$ belongs to the boundary $\partial\Omega$, the jump and average of $\omega$ coincide with its trace on face $e$. The related definitions of any vector quantity are similar. For more details see \cite{Rivierebook}.
\par
The main focus here is the conservation of momentum and total energy, since we solve the non-conservative form of the parabolic subproblem \eqref{eqn-P-nonconservative}. The fluxes across the element interfaces should be designed such that no extra discrete momentum or discrete total energy is created or eliminated over the whole domain. 
We utilize the NIPG method to discretize \eqref{eq:time:step3_1}. 
The bilinear forms $a_\strain: H^2(\setE_h)^d\times H^2(\setE_h)^d\rightarrow \IR$ and $a_\lambda: H^2(\setE_h)^d\times H^2(\setE_h)^d\rightarrow \IR$ associated with terms $-2\div{\strain{(\vec{u})}}$ and $\div{((\div{\vec{u})\vecc{I}})}$ are defined as follows:
\begin{align*}
a_\strain(\vec{u}, \vec{\theta}) &= 
2\sum_{K\in\setE_h} \int_K \strain{(\vec{u})}:\strain{(\vec{\theta})} 
- 2\sum_{e\in\Gammah\cup\partial\Omega_\mathrm{D}} \int_e\avg{\strain{(\vec{u})}\,\normal_e}\cdot\jump{\vec{\theta}}\nonumber\\
&+2\sum_{e\in\Gammah\cup\partial\Omega_\mathrm{D}} \int_e\avg{\strain{(\vec{\theta})}\,\normal_e}\cdot\jump{\vec{u}}
+ \frac{\sigma}{h}\sum_{e\in\Gammah\cup\partial\Omega_\mathrm{D}} \int_e \jump{\vec{u}}\cdot\jump{\vec{\theta}},\\
a_\lambda(\vec{u}, \vec{\theta}) &= -\sum_{K\in\setE_h}\int_K(\div{\vec{u}})(\div{\vec{\theta}})
+\!\! \sum_{e\in\Gammah\cup\partial\Omega_\mathrm{D}}\!\int_e\avg{\div{\vec{u}}}\jump{\vec{\theta}\cdot\normal_e} 
-\!\! \sum_{e\in\Gammah\cup\partial\Omega_\mathrm{D}}\!\int_e\avg{\div{\vec{\theta}}}\jump{\vec{u}\cdot\normal_e}.
\end{align*}
And the linear form $b_{\vec{\tau}}: H^2(\setE_h)^d\rightarrow \IR$ associated with the term $-\div{\vec{\tau}(\vec{u})}$ for the Dirichlet boundary $\partial \Omega_\mathrm{D}$ in \eqref{eq:time:step3_1} is defined by
\begin{align*}
b_{\vec{\tau}}(\vec{\theta}) 
= 2\sum_{e\in\partial\Omega_\mathrm{D}} \int_e (\strain{(\vec{\theta})}\,\normal)\cdot\vec{u}_\mathrm{D}
+ \frac{\sigma}{h}\sum_{e\in\partial\Omega_\mathrm{D}} \int_e \vec{u}_\mathrm{D}\cdot\vec{\theta}
- \frac{2}{3}\sum_{e\in\partial\Omega_\mathrm{D}}\int_e\div{\vec{\theta}}\,(\vec{u}_\mathrm{D}\cdot\normal).
\end{align*}
\par
In order to achieve monotonicity for at least $\IQ^1$ element, we employ the IIPG method to discretize the term $-\laplace{e}$ in \eqref{eq:time:step3_2}. In \ref{sec:positivity:parabolic}, we will prove that the $\IQ^1$ IIPG discretization enjoys an M-matrix structure unconditionally.
For the IIPG discretization, we define the bilinear form $a_{\mathcal{D}}: H^2(\setE_h)\times H^2(\setE_h)\rightarrow \IR$ and the linear form $b_{\mathcal{D}}: H^2(\setE_h)\rightarrow \IR$ for term $-\laplace{e}$ as follows:
\begin{align*}
a_{\mathcal{D}}(e,\chi) &=
\sum_{K\in\setE_h} \int_K \grad e \cdot \grad \chi
-\sum_{e\in\Gammah\cup\partial{\Omega_{\mathrm{D}}}} \int_e \avg{\grad e \cdot \normal_e} \jump{\chi}
+ \frac{\tilde{\sigma}}{h} \sum_{e\in\Gammah\cup\partial{\Omega_{\mathrm{D}}}}\int_e \jump{e}\jump{\chi},\\
b_{\mathcal{D}}(\chi) &=
\frac{\tilde{\sigma}}{h}\sum_{e\in\partial{\Omega_{\mathrm{D}}}}\int_e e_\mathrm{D}\chi.
\end{align*}
\par
For the sake of global conservation of total energy, to discrete term $\vec{\tau}(\vec{u}):\grad{\vec{u}} = 2\strain{(\vec{u})}:\grad{\vec{u}} - \frac{2}{3}((\div{\vec{u}})\vecc{I}):\grad{\vec{u}}$ in \eqref{eq:time:step3_2}, by using the tensor identity $\strain{(\vec{u})}:\grad{\vec{u}} = \strain{(\vec{u})}:\strain{(\vec{u})}$, the DG forms $b_\strain: H^2(\setE_h)^d\times H^2(\setE_h)\rightarrow \IR$ and $b_\lambda: H^2(\setE_h)^d\times H^2(\setE_h)\rightarrow \IR$ are designed for terms $2\strain{(\vec{u})}:\grad{\vec{u}}$ and $-((\div{\vec{u})\vecc{I}}):\grad{\vec{u}}$, respectively.
\begin{align*}
b_\strain(\vec{u}, \chi) &=
2\sum_{K\in\setE_h} \int_K \strain{(\vec{u})}:\strain{(\vec{u})}\chi
+ \frac{\sigma}{h}\sum_{e\in\Gammah} \int_e \jump{\vec{u}}\cdot\jump{\vec{u}}\avg{\chi}
+ \frac{\sigma}{h}\sum_{e\in\partial\Omega_\mathrm{D}} \int_e (\vec{u}-\vec{u}_\mathrm{D})\cdot(\vec{u}-\vec{u}_\mathrm{D}) \chi,\\
b_\lambda(\vec{u}, \chi) &= -\sum_{K\in\setE_h}\int_K(\div{\vec{u}})(\div{\vec{u}})\chi.
\end{align*}
We note that the DG forms above employ penalty parameters $\sigma$ and $\tilde{\sigma}$. For any $\sigma\geq0$, the bilinear form of the NIPG method is coercive. In particular, NIPG0 refers to the choice $\sigma=0$, namely the penalty term is removed. The NIPG0 method is convergent for polynomial degrees greater than or equal to two in two dimension \cite{Rivierebook}.   For IIPG method, the penalty $\tilde{\sigma}$ needs to be large enough for coercivity. 
The penalty parameters used in our numerical tests will be given in Section~\ref{sec:experiments}.
Next we summarize our fully discrete scheme.

\paragraph{\bf The fully discrete scheme}
Let $(\cdot,\cdot)$ and $\langle\cdot,\cdot\rangle$ denote the $L^2$ inner products associated with the quadrature rules which are employed in hyperbolic and parabolic subproblems, respectively. The quadrature rules should be accurate enough for $\IQ^k$ polynomial basis. On a rectangular cell, the $(k+1)^d$-point Gauss quadrature and $(k+1)^d$-point Gauss--Lobatto quadrature  are accurate for integrating $(2k+1)^\mathrm{th}$-order polynomials and $(2k-1)^\mathrm{th}$-order polynomials, respectively.
The quadrature rule for solving $(\mathrm{H})$ can be the same as in \cite{zhang2010maximum,zhang2010positivity, zhang2017positivity}.
\par
Our fully discrete scheme for solving \eqref{eq:CNS:model} can be stated as follows:
\begin{itemize}[leftmargin=0.5cm]
\item[] Step~1. Given $\vec{U}_h^n \in M_h^k\times \mathbf{X}_h^k\times M_h^k$, compute $\vec{U}_h^\mathrm{H}\in M_h^k\times \mathbf{X}_h^k\times M_h^k$ by the DG method \eqref{eq:hyper_space_dis} with the positivity-preserving SSP Runge--Kutta \eqref{rk3} \cite{zhang2010positivity, zhang2017positivity} using step size $\frac{\Delta t}{2}$.
\item[] Step~2. Given $\vec{U}_h^\mathrm{H} \in M_h^k\times \mathbf{X}_h^k\times M_h^k$, compute $(\vec{u}_h^\mathrm{H}, e_h^\mathrm{H}) \in \mathbf{X}_h^k\times M_h^k$ by $L^2$ projection
\begin{align}\label{eq:CNS:P_full_dis_L2proj1}
\langle\vec{m}_h^\mathrm{H},\vec{\theta}_h \rangle = \langle\rho_h^\mathrm{H} \vec{u}_h^\mathrm{H},\vec{\theta}_h \rangle,~~
\forall \vec{\theta}_h \in \mathbf{X}_h^k \quad\text{and}\quad
\langle E_h^\mathrm{H},\chi_h \rangle = \langle\rho_h^\mathrm{H}e_h^\mathrm{H},\chi_h \rangle + \frac{1}{2} \langle\rho_h^\mathrm{H}\vec{u}_h^{\mathrm{H}},\vec{u}_h^{\mathrm{H}}\chi_h \rangle,~~ \forall \chi_h\in M_h^k.
\end{align}
\item[] Step~3. First given $\rho_h^{\mathrm{H}} \in M_h^k$, and set $\rho^{\mathrm{P}}_h = \rho^{\mathrm{H}}_h$. Given $(\rho^{\mathrm{H}}_h, \rho^{\mathrm{P}}_h, \vec{u}^{\mathrm{H}}_h) \in M_h^k\times M_h^k\times \mathbf{X}_h^k$, solve for $(\vec{u}^{\ast}_h, \vec{u}^{\mathrm{P}}_h)\in \mathbf{X}_h^k\times\mathbf{X}_h^k$, such that for all $\vec{\theta}_h\in \mathbf{X}_h^k$
\begin{subequations}
\begin{align}
\langle\rho_h^\mathrm{P}\vec{u}_h^\ast,\vec{\theta}_h\rangle + \frac{\Delta t}{2\Rey}a_\strain(\vec{u}^{\ast}_h,\vec{\theta}_h) + \frac{\Delta t}{3\Rey}a_\lambda(\vec{u}^{\ast}_h,\vec{\theta}_h)
&= \langle\rho_h^\mathrm{H}\vec{u}_h^\mathrm{H},\vec{\theta}_h\rangle +\frac{\Delta t}{2\Rey} b_{\vec{\tau}}(\vec{\theta}_h),\label{eq:CNS:P_full_dis2_1}\\
\vec{u}^{\mathrm{P}}_h &= 2\vec{u}^{\ast}_h - \vec{u}^{\mathrm{H}}_h.\label{eq:CNS:P_full_dis2_2}
\end{align}
Then given $(\rho^{\mathrm{H}}_h, \rho^{\mathrm{P}}_h, \vec{u}^{\ast}_h, e^{\mathrm{H}}_h) \in M_h^k\times M_h^k\times \mathbf{X}_h^k\times M_h^k$, solve for $e^{\mathrm{P}}_h \in M_h^k$, such that for all $\chi_h\in M_h^k$
\begin{align}\label{eq:CNS:P_full_dis3}
\langle\rho^\mathrm{P}_h e_h^\mathrm{P},\chi_h\rangle + \frac{\Delta t\lambda}{\Rey}a_{\mathcal{D}}(e_h^\mathrm{P},\chi_h) = \langle\rho_h^\mathrm{H} e_h^\mathrm{H},\chi_h\rangle + \frac{\Delta t}{\Rey}b_\strain(\vec{u}_h^{\ast},\chi_h) 
+ \frac{2\Delta t}{3\Rey}b_\lambda(\vec{u}_h^{\ast},\chi_h) + \frac{\Delta t\lambda}{\Rey}b_\mathcal{D}(\chi_h).
\end{align}
\end{subequations}
\item[] Step~4. Given $(\rho_h^{\mathrm{P}}, \vec{u}_h^{\mathrm{P}}, e_h^{\mathrm{P}})\in M_h^k\times \mathbf{X}_h^k\times M_h^k$, compute $(\vec{m}_h^\mathrm{P}, E_h^\mathrm{P})\in \mathbf{X}_h^k\times M_h^k$ by $L^2$ projection
\begin{align}\label{eq:CNS:P_full_dis_L2proj2}
\langle\vec{m}_h^\mathrm{P},\vec{\theta}_h\rangle = \langle\rho_h^\mathrm{P}\vec{\vec{u}}_h^\mathrm{P},\vec{\theta}_h\rangle,~~
\forall \vec{\theta}_h \in \mathbf{X}_h^k
\quad\text{and}\quad
\langle E_h^\mathrm{P},\chi_h\rangle = \langle\rho_h^\mathrm{P}e_h^\mathrm{P},\chi_h\rangle + \frac{1}{2}\langle\rho_h^\mathrm{P}\vec{u}_h^{\mathrm{P}},\vec{u}_h^{\mathrm{P}}\chi_h\rangle,~~ \forall \chi_h\in M_h^k.
\end{align}
Postprocess $\vec{U}_h^\mathrm{P}$ by the positivity-preserving limiter in \cite{zhang2010positivity}.
\item[] Step~5. Given $\vec{U}_h^\mathrm{P}\in M_h^k\times \mathbf{X}_h^k\times M_h^k$, compute $\vec{U}_h^{n+1}\in M_h^k\times \mathbf{X}_h^k\times M_h^k$ by \eqref{eq:hyper_space_dis} with step size $\frac{\Delta t}{2}$.
Postprocess $\vec{U}_h^\mathrm{n+1}$ by
the positivity-preserving limiter in \cite{zhang2010positivity}.
\end{itemize}
The initial value $\vec{U}_h^0$ is obtained via postprocessing the $L^2$ projection of $\vec{U}_0$ by the positivity-preserving limiter \cite{zhang2010positivity}. % on quadrature points for $\IQ^k$ ($k\geq2$) scheme. 
The positivity-preserving limiter will be briefly reviewed in Section~\ref{sec:positivity}.
\par
In Step~3, the two linear systems \eqref{eq:CNS:P_full_dis2_1} and \eqref{eq:CNS:P_full_dis3} are solved sequentially. The unique solvability of the linear systems is a straightforward conclusion due to the coercivity, see Chapter~2 and Chapter~5 in \cite{Rivierebook}.

\subsection{The global conservation}
Next we show that the fully discrete scheme preserves the global conservation of conserved variables. For simplicity, we discuss the conservation only for  periodic boundary conditions. It is straightforward to extend the discussion to many other types of boundary conditions, such as the ones implemented in the numerical tests in this paper.
\par
Both the explicit Runge--Kutta DG scheme for the compressible Euler equations and the positivity-preserving limiter conserve mass, momentum, and total energy \cite{zhang2010positivity,zhang2017positivity}. Thus we have 
\begin{align*}
(\rho_h^{n},1) = (\rho_h^{\mathrm{H}},1), \quad
(\vec{m}_h^{n},\vec{1}) = (\vec{m}_h^{\mathrm{H}},\vec{1}),\quad
(E_h^{n},1) = (E_h^{\mathrm{H}},1),
\end{align*}
and $(\rho_h^{n+1},1) = (\rho_h^{\mathrm{P}},1)$. 
Therefore, $(\rho_h^{n},1) = (\rho_h^{n+1},1)$ holds, since in Step~3, we set $\rho_h^{\mathrm{H}} = \rho_h^{\mathrm{P}}$.
\par
Notice that we have $(\vec{m}_h^{n},\vec{1}) = (\vec{m}_h^{\mathrm{H}},\vec{1})$ and $(\vec{m}_h^{n+1},\vec{1}) = (\vec{m}_h^{\mathrm{P}},\vec{1})$.
Assume that $(\cdot, \cdot)$ and $\langle \cdot, \cdot\rangle$ are accurate enough quadratures, we  also have $(\vec{m}_h^{\mathrm{H}},\vec{1}) = \langle\vec{m}_h^{\mathrm{H}},\vec{1}\rangle$ and $(\vec{m}_h^{\mathrm{P}},\vec{1}) = \langle\vec{m}_h^{\mathrm{P}},\vec{1}\rangle$.
Take $\vec{\theta}_h = \vec{1}$ in \eqref{eq:CNS:P_full_dis_L2proj1} and \eqref{eq:CNS:P_full_dis_L2proj2}, we get $\langle\vec{m}_h^{\mathrm{H}},\vec{1}\rangle = \langle\rho_h^\mathrm{H} \vec{u}_h^\mathrm{H},\vec{1}\rangle$ and $\langle\vec{m}_h^{\mathrm{P}},\vec{1}\rangle = \langle\rho_h^\mathrm{P} \vec{u}_h^\mathrm{P},\vec{1}\rangle$.
Thus, above identities indicate $(\vec{m}_h^{n},\vec{1}) = \langle\rho_h^\mathrm{H} \vec{u}_h^\mathrm{H},\vec{1}\rangle$ and $(\vec{m}_h^{n+1},\vec{1}) = \langle\rho_h^\mathrm{P} \vec{u}_h^\mathrm{P},\vec{1}\rangle$. 
By selecting $\vec{\theta}_h = \vec{1}$ in \eqref{eq:CNS:P_full_dis2_1}, we obtain $\langle\rho_h^\mathrm{H} \vec{u}_h^\mathrm{H},\vec{1}\rangle = \langle\rho_h^\mathrm{P} \vec{u}_h^\mathrm{P},\vec{1}\rangle$, namely the discrete momentum conservation holds.
\par
Similarly, we have $(E_h^{n},1) = \langle\rho_h^\mathrm{H} e_h^\mathrm{H}, 1\rangle + \frac{1}{2}\langle\rho_h^\mathrm{H}\vec{u}_h^{\mathrm{H}},\vec{u}_h^{\mathrm{H}}\rangle$ and $(E_h^{n+1},1) = \langle\rho_h^\mathrm{P} e_h^\mathrm{P},1\rangle + \frac{1}{2}\langle\rho_h^\mathrm{P}\vec{u}_h^{\mathrm{P}},\vec{u}_h^{\mathrm{P}}\rangle$.
Recall that $b_{\vec{\tau}}(\vec{\theta})=0$ and $b_\mathcal{D}(\chi)=0$ for periodic boundary conditions, thus
by \eqref{eq:CNS:P_full_dis2_2} and $\rho_h^\mathrm{H} = \rho_h^\mathrm{P}$, the step \eqref{eq:CNS:P_full_dis2_1} can be written as
\begin{align*}
\langle\rho_h^\mathrm{P}\vec{u}_h^\mathrm{P},\vec{\theta}_h\rangle + \frac{\Delta t}{\Rey}a_\strain(\vec{u}^{\ast}_h,\vec{\theta}_h) + \frac{2\Delta t}{3\Rey}a_\lambda(\vec{u}^{\ast}_h,\vec{\theta}_h) &= \langle\rho_h^\mathrm{H}\vec{u}_h^\mathrm{H},\vec{\theta}_h\rangle.
\end{align*}
Plugging in $\vec{\theta}_h = (\vec{u}^{\mathrm{P}}_h + \vec{u}^{\mathrm{H}}_h)/2 = \vec{u}^{\ast}_h$, we have
\begin{align}
\frac{1}{2}\langle\rho_h^\mathrm{P}\vec{u}_h^\mathrm{P},\vec{u}_h^\mathrm{P}\rangle + \frac{\Delta t}{\Rey}a_\strain(\vec{u}^{\ast}_h,\vec{u}^{\ast}_h) + \frac{2\Delta t}{3\Rey}a_\lambda(\vec{u}^{\ast}_h,\vec{u}^{\ast}_h) = \frac{1}{2}\langle\rho_h^\mathrm{H}\vec{u}_h^\mathrm{H},\vec{u}_h^\mathrm{H}\rangle.
\end{align}
With $\chi_h = 1$ in \eqref{eq:CNS:P_full_dis3}, we have
\begin{align}
\langle\rho^\mathrm{P}_h e_h^\mathrm{P},1\rangle + \frac{\Delta t\lambda}{\Rey}a_{\mathcal{D}}(e_h^\mathrm{P},1) = \langle\rho_h^\mathrm{H} e_h^\mathrm{H},1\rangle + \frac{\Delta t}{\Rey}b_\strain(\vec{u}_h^{\ast},1) + \frac{2\Delta t}{3\Rey}b_\lambda(\vec{u}_h^{\ast},1).
\end{align}
By adding two equations above, with the fact that $a_{\mathcal{D}}(e_h^\mathrm{P},1) = 0$ and the identities $a_\strain(\vec{u}_h^{\ast},\vec{u}_h^{\ast}) = b_\strain(\vec{u}_h^{\ast},1)$ and $a_\lambda(\vec{u}_h^{\ast},\vec{u}_h^{\ast}) = b_\lambda(\vec{u}_h^{\ast},1)$,
we obtain
\begin{align*}
\langle\rho^{\mathrm{H}}_h e^{\mathrm{H}}_h, 1\rangle + \frac{1}{2}\langle\rho^{\mathrm{H}}_h\vec{u}^{\mathrm{H}}_h, \vec{u}_h^{\mathrm{H}}\rangle = \langle\rho^{\mathrm{P}}_h e^{\mathrm{P}}_h, 1\rangle + \frac{1}{2}\langle\rho^{\mathrm{P}}_h\vec{u}^{\mathrm{P}}_h, \vec{u}^{\mathrm{P}}_h\rangle.
\end{align*}

\begin{theorem}\label{thm:dis_momentum_conv}
For $\IQ^k$ scheme, assume the quadrature rules in hyperbolic and parabolic subproblems are both exact for integrating polynomials of degree $k$, then the fully discrete scheme conserves density, momentum, and total energy,
$$(\rho_h^{n},1) = (\rho_h^{n+1},1),\quad (\vec{m}_h^{n},\vec{1}) = (\vec{m}_h^{n+1},\vec{1}), \quad (E_h^{n},1) = (E_h^{n+1},1).$$
\end{theorem}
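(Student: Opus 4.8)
\emph{Plan of proof.} The plan is to follow the three conserved integrals $(\rho_h,1)$, $(\vec m_h,\vec1)$, $(E_h,1)$ through the five steps of one Strang sweep \eqref{algorithm-splitting}. Steps~1 and~5 are half-steps of the explicit positivity-preserving Runge--Kutta DG scheme followed by the limiter, and these are already known to conserve mass, momentum and total energy \cite{zhang2010positivity,zhang2017positivity}, so they contribute for free the identities $(\rho_h^n,1)=(\rho_h^{\mathrm H},1)$, $(\vec m_h^n,\vec1)=(\vec m_h^{\mathrm H},\vec1)$, $(E_h^n,1)=(E_h^{\mathrm H},1)$ as well as $(\rho_h^{\mathrm P},1)=(\rho_h^{n+1},1)$, $(\vec m_h^{\mathrm P},\vec1)=(\vec m_h^{n+1},\vec1)$, $(E_h^{\mathrm P},1)=(E_h^{n+1},1)$. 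Thus everything reduces to showing that the parabolic step (Steps~2--4) sends the $\mathrm H$-state integrals to the $\mathrm P$-state integrals. A preliminary remark to record is that every integrand occurring below is a $\IQ^k$ function times a constant, hence of degree at most $k$, so the quadrature hypothesis lets me identify $(\cdot,\cdot)$ with $\langle\cdot,\cdot\rangle$ on all of them, e.g.\ $(\vec m_h^{\mathrm H},\vec1)=\langle\vec m_h^{\mathrm H},\vec1\rangle$ and $(E_h^{\mathrm H},1)=\langle E_h^{\mathrm H},1\rangle$.

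\emph{Density and momentum.} Density is trivial since Step~3 sets $\rho_h^{\mathrm P}=\rho_h^{\mathrm H}$. For momentum, I would take $\vec\theta_h=\vec1$ in the $L^2$ projections \eqref{eq:CNS:P_full_dis_L2proj1}, \eqref{eq:CNS:P_full_dis_L2proj2} to pass from $\vec m_h$ to $\rho_h\vec u_h$, and then take $\vec\theta_h=\vec1$ in the combined velocity update (the rewriting of \eqref{eq:CNS:P_full_dis2_1}--\eqref{eq:CNS:P_full_dis2_2} using $\rho_h^{\mathrm H}=\rho_h^{\mathrm P}$) and observe that $a_\strain$, $a_\lambda$, $b_{\vec\tau}$ all vanish on a constant: the volume integrands vanish because $\strain(\vec1)=\vec0$ and $\div\vec1=0$, the face and penalty integrands vanish because $\jump{\vec1}=\vec0$ and $\avg{\strain(\vec1)\normal_e}=\vec0$, and the boundary form vanishes because $\partial\Omega_{\mathrm D}=\emptyset$ for periodic data. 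This leaves $\langle\rho_h^{\mathrm P}\vec u_h^{\mathrm P},\vec1\rangle=\langle\rho_h^{\mathrm H}\vec u_h^{\mathrm H},\vec1\rangle$, which together with the two projections gives momentum conservation.

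\emph{Total energy.} After using the projections with $\chi_h=1$, the task is to prove $\langle\rho_h^{\mathrm P}e_h^{\mathrm P},1\rangle+\tfrac12\langle\rho_h^{\mathrm P}\vec u_h^{\mathrm P},\vec u_h^{\mathrm P}\rangle=\langle\rho_h^{\mathrm H}e_h^{\mathrm H},1\rangle+\tfrac12\langle\rho_h^{\mathrm H}\vec u_h^{\mathrm H},\vec u_h^{\mathrm H}\rangle$. The kinetic-energy balance comes from testing the combined velocity update with $\vec\theta_h=\vec u_h^\ast=\tfrac12(\vec u_h^{\mathrm P}+\vec u_h^{\mathrm H})$ and invoking the Crank--Nicolson telescoping identity $\langle\rho(\vec u^{\mathrm P}-\vec u^{\mathrm H}),\tfrac12(\vec u^{\mathrm P}+\vec u^{\mathrm H})\rangle=\tfrac12\langle\rho\vec u^{\mathrm P},\vec u^{\mathrm P}\rangle-\tfrac12\langle\rho\vec u^{\mathrm H},\vec u^{\mathrm H}\rangle$ (with $\rho=\rho_h^{\mathrm H}=\rho_h^{\mathrm P}$ and symmetry of $\langle\cdot,\cdot\rangle$), which produces
\[ \tfrac12\langle\rho_h^{\mathrm P}\vec u_h^{\mathrm P},\vec u_h^{\mathrm P}\rangle-\tfrac12\langle\rho_h^{\mathrm H}\vec u_h^{\mathrm H},\vec u_h^{\mathrm H}\rangle+\frac{\Delta t}{\Rey}a_\strain(\vec u_h^\ast,\vec u_h^\ast)+\frac{2\Delta t}{3\Rey}a_\lambda(\vec u_h^\ast,\vec u_h^\ast)=0. \]
The internal-energy balance comes from testing \eqref{eq:CNS:P_full_dis3} with $\chi_h=1$ and using $a_{\mathcal D}(e_h^{\mathrm P},1)=0$ (since $\jump{1}=0$) and $b_{\mathcal D}(1)=0$ (periodic), giving $\langle\rho_h^{\mathrm P}e_h^{\mathrm P},1\rangle-\langle\rho_h^{\mathrm H}e_h^{\mathrm H},1\rangle=\frac{\Delta t}{\Rey}b_\strain(\vec u_h^\ast,1)+\frac{2\Delta t}{3\Rey}b_\lambda(\vec u_h^\ast,1)$. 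Adding the two balances, the claim follows provided the viscous terms match, i.e.\ $a_\strain(\vec v,\vec v)=b_\strain(\vec v,1)$ and $a_\lambda(\vec v,\vec v)=b_\lambda(\vec v,1)$ for $\vec v\in\mathbf{X}_h^k$.

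\emph{Main obstacle.} These two compatibility identities are where the specific construction of the discrete viscous forms is used, and I expect them to be the only nonroutine point. On the diagonal, the symmetrization face term $-2\int_e\avg{\strain(\vec v)\normal_e}\cdot\jump{\vec v}$ and the adjoint-consistency face term $+2\int_e\avg{\strain(\vec v)\normal_e}\cdot\jump{\vec v}$ of $a_\strain$ cancel exactly, leaving only the volume term $2\sum_K\int_K\strain(\vec v):\strain(\vec v)$ and the penalty $\tfrac{\sigma}{h}\sum_{e\in\Gammah}\int_e\jump{\vec v}\cdot\jump{\vec v}$ (using $\partial\Omega_{\mathrm D}=\emptyset$), which is exactly $b_\strain(\vec v,1)$ once one uses the tensor identity $\strain(\vec v):\grad\vec v=\strain(\vec v):\strain(\vec v)$ and $\avg{1}=1$; similarly the two off-diagonal face terms of $a_\lambda$ cancel, leaving $-\sum_K\int_K(\div\vec v)(\div\vec v)=b_\lambda(\vec v,1)$. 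With this in hand the viscous contributions cancel when the two balances are added, so $(E_h^{\mathrm P},1)=(E_h^{\mathrm H},1)$ and hence $(E_h^n,1)=(E_h^{n+1},1)$. For nonperiodic data the same argument works, but one must additionally track the Dirichlet boundary parts of $b_{\vec\tau}$, $b_\strain$, $b_\lambda$, $b_{\mathcal D}$ and check that the discrete boundary work also cancels between the kinetic- and internal-energy equations.
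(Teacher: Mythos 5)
Your proposal is correct and follows essentially the same route as the paper: conservation of Steps 1 and 5 is inherited from the positivity-preserving Runge--Kutta DG scheme and limiter, density is trivial since $\rho_h^{\mathrm P}=\rho_h^{\mathrm H}$, momentum follows from testing the combined velocity update with $\vec\theta_h=\vec 1$, and total energy follows from testing with $\vec\theta_h=\vec u_h^\ast$ and $\chi_h=1$ and cancelling the viscous terms via $a_\strain(\vec u_h^\ast,\vec u_h^\ast)=b_\strain(\vec u_h^\ast,1)$ and $a_\lambda(\vec u_h^\ast,\vec u_h^\ast)=b_\lambda(\vec u_h^\ast,1)$. Your explicit verification of these two compatibility identities (the cancellation of the antisymmetric NIPG face terms on the diagonal) is a detail the paper states without proof, and your remarks on quadrature exactness and on the boundary terms in the nonperiodic case are consistent with the paper's treatment.
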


%%%%%%%%%%%%%%%%%%%%%%%%%%%%%%%%%%%%%%%%%%%%%%%%%%%%%%%%%%%%%%%%%%%%%%%%%%%%%%%%%%%%%%%%%%%%%%%%%%%%%%%%%%%%%%%%%%%%%%%
\section{The positivity-preserving property}\label{sec:positivity}
%%%%%%%%%%%%%%%%%%%%%%%%%%%%%%%%%%%%%%%%%%%%%%%%%%%%%%%%%%%%%%%%%%%%%%%%%%%%%%%%%%%%%%%%%%%%%%%%%%%%%%%%%%%%%%%%%%%%%%%  
\par From Section~\ref{sec:numerical_scheme}, a schematic flowchart of our fully discrete scheme at step $n\geq0$ is as follows:
\begin{align*}
\vec{U}_h^n
\xrightarrow[\text{step size}~\frac{\Delta t}{2}]{\text{solve}~(\mathrm{H})} \vec{U}_h^\mathrm{H}
\xrightarrow[]{\text{$L^2$ proj.}} (\vec{u}_h^\mathrm{H},e_h^\mathrm{H})
\xrightarrow[\text{step size}~\Delta t]{\text{solve}~(\mathrm{P})} (\vec{u}_h^\mathrm{P},e_h^\mathrm{P})
\xrightarrow[]{\text{$L^2$ proj.}} \vec{U}_h^\mathrm{P}
\xrightarrow[\text{step size}~\frac{\Delta t}{2}]{\text{solve}~(\mathrm{H})} \vec{U}_h^{n+1}.
\end{align*}
At a given time step $n$, the numerical solution $\vec{U}_h^{n}$ is a piecewise polynomial. Usually it is impractical to have $\vec{U}_h^{n}(\vec{x})\in G$, for all $\vec{x}\in \Omega$, i.e, positivity holds everywhere. On the other hand, notice that the scheme is implemented with quadrature, thus it suffices to enforce positivity only at quadrature points.

\paragraph{\bf Quadratures and basis}
We utilize different quadrature rules for different integral terms such as volume integrals and surface integrals. For $\IQ^k$ scheme, the quadrature rules employed in hyperbolic and parabolic subproblems are defined as follows: 
\begin{enumerate}[leftmargin=0.5cm]
\item For face integrals in ($\mathrm{H}$), we use the $(k+1)$-point Gauss quadrature. Denote the set of associated quadrature points here by $S_K^{\mathrm{H, face}}$ on a cell $K$.
\item For volume integrals in ($\mathrm{H}$), we use a quadrature rule constructed by the tensor product of Gauss quadrature and request this quadrature is accurate for at least $(2k+1)$-order polynomials. Denote the set of associated quadrature points here by $S_K^{\mathrm{H, vol}}$ on a cell $K$.
\item For all (face and volume) integrals in ($\mathrm{P}$), we use a quadrature rule constructed by the tensor product of $(k+1)$-point Gauss--Lobatto quadrature. Denote the set of associated quadrature points here by $S_K^{\mathrm{P}}$ on a cell $K$.
\end{enumerate}
In addition, we consider the points for weak positivity of the compressible Euler equations \cite{zhang2017positivity}, which are constructed by $(k+1)$-point Gauss quadrature tensor product with $N$-point Gauss--Lobatto quadrature in both $x$ and $y$ directions  and we request $2N-3\geq k$. Let $S_K^{\mathrm{H, aux}}$ denote a collection from these points, where each point in $S_K^{\mathrm{H, aux}}$ is located on the interior of a cell $K$. 
\par
As an example, we illustrate the quadrature points in $\IQ^2$ scheme.  
The red points on the left of Figure~\ref{fig:quadrature_rule} are used for computing the face integrals of numerical fluxes along the cell boundary when solving the hyperbolic subproblem. The black points together with the red points form a special quadrature for weak positivity. Notice, the black points are not used in computing any numerical integrals \cite{zhang2017positivity}.
The red points in the middle of Figure~\ref{fig:quadrature_rule} are used for computing the volume integrals of numerical fluxes when solving the hyperbolic subproblem. 
The blue points on the right of Figure~\ref{fig:quadrature_rule} are used for computing all of the integrals when solving the parabolic subproblem.
\par
Let $\hat{K} = [-\frac{1}{2},\frac{1}{2}]^d$ be the reference element.
For $\IQ^k$ scheme, we use $(k+1)^d$ Gauss--Lobatto points to construct Lagrange interpolation polynomials, which serve as basis functions. For example, the blue points in Figure~\ref{fig:quadrature_rule} are used for constructing the bases of our $\IQ^2$ scheme.
The total number of bases on $\hat{K}$, namely the number of local degrees of freedom is $\Nloc=(k+1)^d$. 
Let $\hat{\vec{q}}_\nu$ denote the $\nu^\mathrm{th}$ Gauss--Lobatto point on $\hat{K}$, where $\nu=0,\cdots,\Nloc-1$. We assign a basis with an index $j$, if it equals to $1$ when evaluated it at $\hat{\vec{q}}_j$. From this construction, we have $\hat{\varphi}_j(\hat{\vec{q}}_\nu) = \delta_{j\nu}$, where $\delta$ denotes the Kronecker delta.
Let $\vec{F}_i:~\hat{K}\rightarrow K_i$ denote the invertible mapping from the reference element $\hat{K}$ to $K_i\in \setE_h$, then the basis functions on element $K_i$ are defined by $\varphi_{ij} = \hat{\varphi}_j\circ\vec{F}_i^{-1}$.
Thus, we have $\varphi_{i_1j}(\vec{q}_{i_2\nu}) = \delta_{i_1i_2}\delta_{j\nu}$, which indicates the points $\vec{q}_{i\nu} = \vec{F}_i(\hat{\vec{q}}_\nu)$ are not only quadrature nodes but also representing all degrees of freedom on cell $K_i$.
It is obvious that these bases are numerically orthogonal with respect to $(k+1)^d$-point Gauss--Lobatto rule.
\begin{figure}[ht!]
\begin{center}
\begin{tabularx}{0.9\linewidth}{@{}C@{~~}C@{~~}C@{}}
\includegraphics[width=0.27\textwidth]{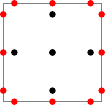} &
\includegraphics[width=0.27\textwidth]{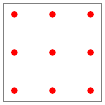} &
\includegraphics[width=0.27\textwidth]{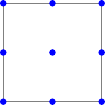} \\
\end{tabularx}
\caption{An illustration of quadratures used in $\IQ^2$ schemes. Left: Gauss quadrature tensor product with Gauss--Lobatto quadrature in both $x$ and $y$ directions. The points along the boundary are exactly $S_K^{\mathrm{H, face}}$, which are marked red. The other points in $S_K^{\mathrm{H, aux}}$ are marked black. Middle: Gauss quadrature tensor product with Gauss quadrature. The points in $S_K^{\mathrm{H, vol}}$ are marked red. Right: Gauss--Lobatto quadrature tensor product with Gauss--Lobatto quadrature. The points in $S_K^{\mathrm{P}}$ are marked blue.}
\label{fig:quadrature_rule}
\end{center}
\end{figure}

\paragraph{\bf The outline of proving positivity}
Let $S_K^\mathrm{H} = S_K^{\mathrm{H, face}}\cup S_K^{\mathrm{H, aux}}\cup S_K^{\mathrm{H, vol}}$ and let $S_h$ be the union of set $S_K^\mathrm{H}$, for all $K\in\setE_h$. 
On each time iteration of the fully discrete scheme, we apply the positivity-preserving limiter on the following quadrature points on each cell $K$.
\begin{enumerate}[leftmargin=0.5cm]
\item In Step~1 and Step~5, on each stage of SSP Runge--Kutta method, all points in set $S_K^{\mathrm{H}}$ need to be limited. 
As an example, for $\IQ^2$ scheme, all of the red and black points in Figure~\ref{fig:quadrature_rule}.
\item In Step~1, on the last stage of SSP Runge--Kutta method, all points in set $S_K^{\mathrm{H}}\cup S_K^{\mathrm{P}}$ need to be limited.
As an example, for $\IQ^2$ scheme, all of the red, black, and blue points in Figure~\ref{fig:quadrature_rule}.
\item In Step~4, all points in set $S_K^{\mathrm{H}}$ need to be limited. 
As an example, for $\IQ^2$ scheme, all of the red and black points in Figure~\ref{fig:quadrature_rule}.
\end{enumerate} 
\par
To prove our fully discrete scheme is positivity-preserving, we need to show $$\vec{U}_h^{n}(\vec{x})\in G,  \forall \vec{x}\in S_h ~~\Rightarrow~~ \vec{U}_h^{n+1}(\vec{x})\in G, \forall \vec{x}\in S_h$$ by the following steps:
\begin{enumerate}[leftmargin=0.5cm]
\item The positivity-preserving property of Runge–Kutta DG scheme for compressible Euler equations will be briefly reviewed in Section~\ref{sec:positivity:hyperbolic}.  
\item  In Section \ref{sec-pos-proj}, we will show that the simple positivity-preserving limiter can ensure positivity in the $L^2$ projection steps.
\item  In Section~\ref{sec:pos_P_high_order} and \ref{sec:positivity:parabolic}, we will show that the system matrix of \eqref{eq:CNS:P_full_dis3} in parabolic subproblem is monotone. Thus, the scheme preserves positivity of internal energy.
\end{enumerate}
% We have a collection of finite points, always exist a smallest one. Therefor, we can always find a sufficiently small $\epsilon > 0$ satisfies above.
\par
We emphasize that the first two steps above can be easily extended to unstructured meshes. But in the third step, the monotonicity of high order schemes only holds on uniform rectangular meshes. 
For the rest of this section, we only consider a uniform rectangular mesh for a computational domain $\Omega\subset\IR^d$.

\subsection{Positivity of hyperbolic subproblem and the positivity-preserving limiter}\label{sec:positivity:hyperbolic}
\par
One of the most popular approaches of constructing a positivity-preserving high order DG method for conservation laws was introduced by Zhang and Shu in \cite{zhang2010maximum, zhang2010positivity}, see also \cite{,zhang2012minimum,zhang2011positivity,xing2010positivity,wang2012robust,xu2017bound, zhang2017positivity}.
A high-order SSP Runge--Kutta method \eqref{rk3} is a convex combination of several forward Euler steps, thus the positivity of forward Euler time discretization of \eqref{eq:hyper_space_dis} also carries over to Runge--Kutta method \eqref{rk3} due to the convex combination.
\par

Define  the numerical admissible state set  $G^\epsilon$ as
\begin{align*}
G^\epsilon = \{\vec{U}=\transpose{[\rho,\vec{m},E]}\!:~\rho\geq\epsilon,~ \rho e(\vec{U}) = E - \frac{\|\vec{m}\|^2}{2\rho} \geq \epsilon\},
\end{align*}
where $\epsilon$ is a small positive number.
Let $\vec{U}_K(\vec{x})$ denote the DG solution polynomial on a cell $K$ and $\overline{\vec{U}}_K$ be its cell average on $K$.
The main results in \cite{zhang2010maximum, zhang2010positivity} include a sufficient condition for positivity of  cell averages $\overline{\vec{U}}_K^{n+1}\in G^\epsilon$
in the forward Euler discretization of high order DG schemes \eqref{eq:hyper_space_dis} and a simple positivity-preserving limiter to enforce the sufficient condition without destroying conservation and high order accuracy. 
To be specific, the sufficient condition for $\overline{\vec{U}}_K^{n+1}\in G^\epsilon$ is to have certain special quadrature point values of ${\vec{U}}_K^{n}$ to be in $G^\epsilon$, as well as a typical hyperbolic type CFL condition. We emphasize that this special quadrature merely serves as a sufficient condition for positivity of $\overline{\vec{U}}_K^{n+1}\in G^\epsilon$ and it should not be used for computing any integrals. We refer to \cite{zhang2017positivity} for a review of these conditions. 
\par
The positivity-preserving limiter modifies the DG polynomial solution 
$\vec{U}_h(\vec{x}) = \transpose{[\rho_h(\vec{x}), \vec{m}_h(\vec{x}), E_h(\vec{x})]}$
with the following two steps under the assumption that the cell average is positive $\overline{\vec{U}}_K \in G^\epsilon$.
\begin{itemize}[leftmargin=0.5cm]
\item[] 1. First enforce positivity of density by
\begin{align*}
\hat{\rho}_K(\vec{x})=\theta_\rho (\rho_K(\vec{x})-\overline{\rho}_K)+\overline{\rho}_K,
~~\text{where}~
\theta_{\rho}=\min\Big\{1, \frac{\overline{\rho}_K-\epsilon}{\overline{\rho}_K-\min\limits_{\vec{x}\in S_K}\rho_K(\vec{x})}\Big\}.
\end{align*}
In above, $\overline{\rho}_K$ denotes the cell average of $\rho_K$ on $K$. Notice that $\hat{\rho}_K$ and $\rho_K$ have the same cell average, and  $\hat{\rho}_K(\vec{x})={\rho}_K(\vec{x})$ if $\min\limits_{\vec{x}\in S_K}\rho_K(\vec{x})\geq \epsilon.$
\item[] 2. Define $\widehat{\vec{U}}(\vec{x})=\transpose{[\hat\rho(\vec{x}), \vec{m}(\vec{x}), E(\vec{x})]}$ and enforce positivity of internal energy by
\begin{align*}
\widetilde{\vec{U}}_K(\vec{x}) = \theta_e (\widehat{\vec{U}}(\vec{x}) - \overline{\vec{U}}_K) + \overline{\vec{U}}_K,
~~\text{where}~ 
\theta_{e}=\min\Big\{1, \frac{\overline {\rho e }_K-\epsilon}{\overline {\rho e}_K-\min\limits_{\vec{x}\in S_K}\rho e_K(\vec{x})}\Big\}.
\end{align*}
In above, $\overline{\rho e}_K=\overline E_K-\frac12\frac{\|\overline{\vec{m}}_K\|^2}{\overline\rho_K}$
and $\rho e(\vec{x})=E(\vec{x})-\frac12 \frac{\|\vec{m}(\vec{x})\|^2}{\rho(\vec{x})}$.
\end{itemize}
We refer to \cite{zhang2017positivity} for details of the sufficient condition of positivity of cell averages, the CFL condition, and the rigorous justification of the high order accuracy of such a simple limiter.  

\subsection{The positivity of the $L^2$ projection steps} \label{sec-pos-proj}
\par For the quadrature rule in the projection steps \eqref{eq:CNS:P_full_dis_L2proj1} and \eqref{eq:CNS:P_full_dis_L2proj2}, we simply use the tensor product of $(k+1)$-point Gauss--Lobatto quadrature. As an example, for $\IQ^2$ scheme, we use the blue points in Figure~\ref{fig:quadrature_rule}. 
It is straightforward to verify that this quadrature satisfy the condition for preserving conservation in Section~\ref{sec:numerical_scheme}, since it is exact for integrating $\IQ^k$ polynomials.
\par
Next we show the $L^2$ projections in \eqref{eq:CNS:P_full_dis_L2proj1} and \eqref{eq:CNS:P_full_dis_L2proj2} preserve positivity. 
Since the $L^2$ projection is local, we only need to consider a cell $K_i$.
Recall that the basis functions are constructed by using Lagrange interpolation polynomials at $(k+1)^d$ Gauss--Lobatto points and they are numerically orthogonal with respect to the employed Gauss--Lobatto rule.
Thus, the coefficients of basis functions also represent the values of DG solution polynomials at associated Gauss--Lobatto points. We use subscript $ij$ to denote the point value on cell $K_i$ at the $j^\mathrm{th}$ Gauss--Lobatto node.
We have the following results.
\begin{lemma}\label{lem:L2_proj_pos1}
If $\vec{U}_h^{\mathrm{H}}(\vec{x})\in G^\epsilon$, for all $\vec{x}\in S^\mathrm{H}_{K_{i}}$, then after applying the positivity-preserving limiter to $\vec{U}_h^{\mathrm{H}}$ on all points in $S^\mathrm{P}_{K_i}$ and taking the $L^2$ projection, we have $\rho_h^\mathrm{H}(\vec{q}_{ij})\geq\epsilon$ and $\rho_h^\mathrm{H}(\vec{q}_{ij}) e_h^\mathrm{H}(\vec{q}_{ij}) \geq \epsilon$, for all Gauss--Lobatto points $\vec{q}_{ij}\in S_{K_i}^{\mathrm{P}}$. 
\end{lemma}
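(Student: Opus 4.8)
The plan is to turn the variational $L^2$ projection \eqref{eq:CNS:P_full_dis_L2proj1} into a pointwise (collocation) identity at the Gauss--Lobatto nodes of $K_i$, exploiting that the nodal basis on $K_i$ is numerically orthogonal with respect to precisely the $(k+1)^d$-point Gauss--Lobatto rule that defines $\langle\cdot,\cdot\rangle$ in the projection step, and then to read off positivity from the defining property of the positivity-preserving limiter of Section~\ref{sec:positivity:hyperbolic}.

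First I would verify that the limiter is applicable on $K_i$, i.e. that the cell average $\overline{\vec{U}}{}^{\mathrm{H}}_{K_i}$ lies in $G^\epsilon$. Since $\vec{U}_h^{\mathrm{H}}|_{K_i}$ is a $\IQ^k$ polynomial and the hyperbolic volume rule associated with $S_{K_i}^{\mathrm{H,vol}}$ integrates $\IQ^k$ polynomials exactly, $\int_{K_i}\vec{U}_h^{\mathrm{H}}$ equals the corresponding Gauss sum, so $\overline{\vec{U}}{}^{\mathrm{H}}_{K_i}$ is a convex combination, with the positive normalized Gauss weights, of the values $\vec{U}_h^{\mathrm{H}}(\vec{x})$ for $\vec{x}\in S_{K_i}^{\mathrm{H,vol}}\subset S_{K_i}^{\mathrm{H}}$; each of these lies in $G^\epsilon$ by hypothesis and $G^\epsilon$ is convex, hence so is their average. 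Consequently the limiter's scaling factors $\theta_\rho,\theta_e\in[0,1]$ are well defined (and leave the cell average unchanged), and by its construction \cite{zhang2010positivity,zhang2017positivity} the limited polynomial --- which, abusing notation, I keep writing $\vec{U}_h^{\mathrm{H}}=\transpose{[\rho_h^{\mathrm{H}},\vec{m}_h^{\mathrm{H}},E_h^{\mathrm{H}}]}$ --- satisfies $\rho_h^{\mathrm{H}}(\vec{q}_{ij})\ge\epsilon$ and $E_h^{\mathrm{H}}(\vec{q}_{ij})-\frac{\|\vec{m}_h^{\mathrm{H}}(\vec{q}_{ij})\|^2}{2\rho_h^{\mathrm{H}}(\vec{q}_{ij})}\ge\epsilon$ at every node $\vec{q}_{ij}\in S_{K_i}^{\mathrm{P}}$. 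Next I would unwind \eqref{eq:CNS:P_full_dis_L2proj1} nodally: since $\varphi_{i_1j}(\vec{q}_{i_2\nu})=\delta_{i_1i_2}\delta_{j\nu}$ and $\langle\cdot,\cdot\rangle$ is the Gauss--Lobatto rule at these nodes, testing against $\varphi_{ij}$ (times a Cartesian unit vector, for the momentum equation) collapses every quadrature sum to a single term with the positive weight $w_j$, so the first identity reads $\vec{m}_h^{\mathrm{H}}(\vec{q}_{ij})=\rho_h^{\mathrm{H}}(\vec{q}_{ij})\,\vec{u}_h^{\mathrm{H}}(\vec{q}_{ij})$, whence $\vec{u}_h^{\mathrm{H}}(\vec{q}_{ij})=\vec{m}_h^{\mathrm{H}}(\vec{q}_{ij})/\rho_h^{\mathrm{H}}(\vec{q}_{ij})$ (legitimate because $\rho_h^{\mathrm{H}}(\vec{q}_{ij})\ge\epsilon>0$), and the second then reduces to $\rho_h^{\mathrm{H}}(\vec{q}_{ij})\,e_h^{\mathrm{H}}(\vec{q}_{ij})=E_h^{\mathrm{H}}(\vec{q}_{ij})-\frac{\|\vec{m}_h^{\mathrm{H}}(\vec{q}_{ij})\|^2}{2\rho_h^{\mathrm{H}}(\vec{q}_{ij})}$, which is exactly the quantity the limiter forced to be $\ge\epsilon$. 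Combined with $\rho_h^{\mathrm{H}}(\vec{q}_{ij})\ge\epsilon$, this is the assertion.

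I expect the only genuinely delicate point to be the collocation step just used. The products $\rho_h^{\mathrm{H}}\vec{u}_h^{\mathrm{H}}$ and $\rho_h^{\mathrm{H}}\|\vec{u}_h^{\mathrm{H}}\|^2$ have degree exceeding $2k-1$, so the $(k+1)^d$-point Gauss--Lobatto rule does not evaluate $\langle\rho_h^{\mathrm{H}}\vec{u}_h^{\mathrm{H}},\varphi_{ij}\rangle$ or $\langle\rho_h^{\mathrm{H}}\vec{u}_h^{\mathrm{H}},\vec{u}_h^{\mathrm{H}}\varphi_{ij}\rangle$ by true integration; what makes the argument go through is that $\langle\cdot,\cdot\rangle$ in \eqref{eq:CNS:P_full_dis_L2proj1} is defined \emph{as} that nodal quadrature sum --- the choice fixed in Section~\ref{sec-pos-proj} --- so the mass matrix is exactly diagonal and the product terms are pure collocations, with no aliasing to worry about. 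Everything else --- positivity of the Gauss and Gauss--Lobatto weights, convexity of $G^\epsilon$, and $\theta_\rho,\theta_e\in[0,1]$ --- is routine bookkeeping.
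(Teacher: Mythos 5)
Your proposal is correct and follows essentially the same route as the paper's proof: deduce $\overline{\vec{U}_h^{\mathrm{H}}}_{K_i}\in G^\epsilon$ from the nodal hypothesis, apply the limiter at the Gauss--Lobatto points, and use the numerical orthogonality of the Lagrange basis under the Gauss--Lobatto quadrature to collapse \eqref{eq:CNS:P_full_dis_L2proj1} into the pointwise identities $\vec{m}_{ij}^{\mathrm{H}}=\rho_{ij}^{\mathrm{H}}\vec{u}_{ij}^{\mathrm{H}}$ and $E_{ij}^{\mathrm{H}}=\rho_{ij}^{\mathrm{H}}e_{ij}^{\mathrm{H}}+\frac12\rho_{ij}^{\mathrm{H}}\|\vec{u}_{ij}^{\mathrm{H}}\|^2$, from which the claim follows. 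Your added justification of the cell-average step (convex combination via the exact volume quadrature) and your remark that the collocation argument works because $\langle\cdot,\cdot\rangle$ is \emph{defined} as the nodal quadrature sum are details the paper leaves implicit, but they do not change the argument.
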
 
\begin{proof}
The condition $\vec{U}_h^\mathrm{H}(\vec{x})\in G^\epsilon$, for all $\vec{x}\in S^\mathrm{H}_{K_i}$, implies $\overline{\vec{U}_h^\mathrm{H}}_{K_i} \in G^\epsilon$. 
Applying the positivity-preserving limiter on all Gauss--Lobatto points $\vec{q}_{ij} \in S^\mathrm{P}_{K_i}$, we obtain $\rho_h^\mathrm{H}(\vec{q}_{ij})\geq\epsilon$ and $\on{\rho e(\vec{U}_h^\mathrm{H})}{\vec{q}_{ij}} \geq\epsilon$.
By taking test functions $\vec{\theta}_h = \vec{e}_\ell\varphi_{ij}$ and $\chi_h = \varphi_{ij}$ in \eqref{eq:CNS:P_full_dis_L2proj1}, due to the numerical orthogonality of the Lagrange bases, we get
$\vec{m}_{ij}^\mathrm{H} = \rho_{ij}^\mathrm{H} \vec{u}_{ij}^\mathrm{H}$
and $E^\mathrm{H}_{ij} = \rho^\mathrm{H}_{ij} e^\mathrm{H}_{ij} + \frac{1}{2}\rho^\mathrm{H}_{ij}\norm{\vec{u}^\mathrm{H}_{ij}}{}^2$. 
Therefore, we have
\begin{align*}
\rho^\mathrm{H}_{ij} e^\mathrm{H}_{ij} 
= E^\mathrm{H}_{ij} - \frac{1}{2}\rho^\mathrm{H}_{ij}\norm{\vec{u}^\mathrm{H}_{ij}}{}^2 
= E^\mathrm{H}_{ij} - \frac{\|\vec{m}_{ij}^\mathrm{H}\|^2}{2\rho_{ij}^\mathrm{H}}
= \on{\rho e(\vec{U}_h^\mathrm{H})}{\vec{q}_{ij}} \geq\epsilon.
\end{align*}
\end{proof}
\begin{lemma} \label{lem:L2_proj_pos2}
If $\rho_h^\mathrm{P}(\vec{q}_{ij}) \geq \epsilon$ and $\rho_h^\mathrm{P}(\vec{q}_{ij}) e_h^\mathrm{P}(\vec{q}_{ij}) \geq \epsilon$, for all Gauss--Lobatto points $\vec{q}_{ij} \in S_{K_i}^\mathrm{P}$, then after taking the $L^2$ projection and applying the positivity-preserving limiter to $\vec{U}_h^{\mathrm{P}}$ on all points in $S^\mathrm{H}_{K_i}$ we have $\vec{U}_h^{\mathrm{P}}(\vec{x})\in G^\epsilon$, for all $\vec{x} \in S^{\mathrm{H}}_{K_i}$. 
\end{lemma}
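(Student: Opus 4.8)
The plan is to mirror the structure of the proof of Lemma~\ref{lem:L2_proj_pos1}, but run the implications in the reverse direction. The essential observation is that the $L^2$ projection in \eqref{eq:CNS:P_full_dis_L2proj2} is local on each cell $K_i$ and, because the basis functions are the Lagrange interpolants at the $(k+1)^d$ Gauss--Lobatto nodes and these bases are numerically orthogonal with respect to the Gauss--Lobatto rule, the projection is nothing but nodal evaluation at those points. Concretely, taking $\vec{\theta}_h = \vec{e}_\ell\varphi_{ij}$ and $\chi_h = \varphi_{ij}$ in \eqref{eq:CNS:P_full_dis_L2proj2} and using $\varphi_{i_1j}(\vec{q}_{i_2\nu}) = \delta_{i_1i_2}\delta_{j\nu}$ gives, on each Gauss--Lobatto point $\vec{q}_{ij}\in S_{K_i}^{\mathrm{P}}$,
\begin{align*}
\vec{m}_{ij}^\mathrm{P} = \rho_{ij}^\mathrm{P}\vec{u}_{ij}^\mathrm{P}, \qquad
E^\mathrm{P}_{ij} = \rho^\mathrm{P}_{ij} e^\mathrm{P}_{ij} + \tfrac{1}{2}\rho^\mathrm{P}_{ij}\norm{\vec{u}^\mathrm{P}_{ij}}{}^2.
\end{align*}
Hence $\rho e(\vec{U}_h^\mathrm{P})|_{\vec{q}_{ij}} = E^\mathrm{P}_{ij} - \tfrac{1}{2}\rho^\mathrm{P}_{ij}\norm{\vec{u}^\mathrm{P}_{ij}}{}^2 = \rho^\mathrm{P}_{ij}e^\mathrm{P}_{ij}$, so the hypotheses $\rho_h^\mathrm{P}(\vec{q}_{ij})\geq\epsilon$ and $\rho_h^\mathrm{P}(\vec{q}_{ij})e_h^\mathrm{P}(\vec{q}_{ij})\geq\epsilon$ immediately yield $\vec{U}_h^\mathrm{P}(\vec{q}_{ij})\in G^\epsilon$ at every point of $S_{K_i}^\mathrm{P}$.

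Next I would pass from positivity at the Gauss--Lobatto points $S_{K_i}^\mathrm{P}$ to positivity of the cell average $\overline{\vec{U}_h^\mathrm{P}}_{K_i}$. Because the $(k+1)^d$-point Gauss--Lobatto quadrature is exact for $\IQ^k$ polynomials and $\vec{U}_h^\mathrm{P}|_{K_i}\in\IQ^k(K_i)$ componentwise, the cell average equals the (positively weighted) quadrature average of the point values $\vec{U}_h^\mathrm{P}(\vec{q}_{ij})$. Since $G^\epsilon$ is convex, $\overline{\vec{U}_h^\mathrm{P}}_{K_i}$ is a convex combination of elements of $G^\epsilon$ and therefore lies in $G^\epsilon$. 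This is exactly the condition under which the positivity-preserving limiter of Section~\ref{sec:positivity:hyperbolic} is well defined.

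Finally, with $\overline{\vec{U}_h^\mathrm{P}}_{K_i}\in G^\epsilon$ established, I would invoke the properties of the limiter recalled in Section~\ref{sec:positivity:hyperbolic}: the density-scaling step guarantees $\hat\rho_{K_i}(\vec{x})\geq\epsilon$ for all $\vec{x}\in S_{K_i}^\mathrm{H}$, and the subsequent internal-energy-scaling step, combined with the concavity of $\rho e$ in $\vec{U}$ noted after the definition of $G$, guarantees $\rho e(\widetilde{\vec{U}}_{K_i}(\vec{x}))\geq\epsilon$ for all $\vec{x}\in S_{K_i}^\mathrm{H}$; together these give $\widetilde{\vec{U}}_h^\mathrm{P}(\vec{x})\in G^\epsilon$ for all $\vec{x}\in S_{K_i}^\mathrm{H}$, which is the claim. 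The only mild subtlety, and the step I would be most careful about, is that the limiter is applied at the points of $S_{K_i}^\mathrm{H}$, which is a \emph{different} point set from $S_{K_i}^\mathrm{P}$ where positivity was assumed; this is harmless precisely because the limiter needs only the cell average in $G^\epsilon$ (not pointwise positivity of the unlimited polynomial) to produce a polynomial that is in $G^\epsilon$ at whatever finite set of points it is evaluated on. So the crux is really the convexity argument of the middle paragraph; everything else is bookkeeping already carried out in Lemma~\ref{lem:L2_proj_pos1} and in \cite{zhang2017positivity}.
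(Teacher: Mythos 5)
Your proposal is correct and follows essentially the same route as the paper: nodal evaluation of the $L^2$ projection via numerical orthogonality of the Gauss--Lobatto Lagrange basis, then positivity of the cell average, then the limiter. The only cosmetic difference is that you pass to the cell average by invoking convexity of $G^\epsilon$ directly, whereas the paper applies Jensen's inequality to the concave function $\rho e$ (and notes $\rho_h^{\mathrm{P}}=\rho_h^{\mathrm{H}}$ for the density part) --- these are equivalent, since the convexity of $G^\epsilon$ is itself a consequence of that concavity.
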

\begin{proof}
The density $\rho_h^\mathrm{P}$ equals to $\rho_h^\mathrm{H}$. Thus, we only need to show the positivity of internal energy. 
By taking test functions $\vec{\theta}_h = \vec{e}_\ell\varphi_{ij}$ and $\chi_h = \varphi_{ij}$ in \eqref{eq:CNS:P_full_dis_L2proj2}, due to the numerical orthogonality of the Lagrange bases, we have $\vec{m}_{ij}^\mathrm{P} = \rho_{ij}^\mathrm{P} \vec{u}_{ij}^\mathrm{P}$ and $E^\mathrm{P}_{ij} = \rho^\mathrm{P}_{ij} e^\mathrm{P}_{ij} + \frac{1}{2}\rho^\mathrm{P}_{ij}\norm{\vec{u}^\mathrm{P}_{ij}}{}^2$. 
By $\rho^\mathrm{P}_{ij} = \rho_h^\mathrm{P}(\vec{q}_{ij})$ and $e^\mathrm{P}_{ij} = e_h^\mathrm{P}(\vec{q}_{ij})$, we have
\begin{align*}
\on{\rho e(\vec{U}^\mathrm{P}_h)}{\vec{q}_{ij}}
= E_{ij}^\mathrm{P} - \frac{\|\vec{m}_{ij}^\mathrm{P}\|^2}{2\rho_{ij}^\mathrm{P}}
= E_{ij}^\mathrm{P} - \frac{1}{2}\rho_{ij}^\mathrm{P}\|\vec{u}_{ij}^\mathrm{P}\|^2 
= \rho^\mathrm{P}_{ij} e^\mathrm{P}_{ij} \geq \epsilon.
\end{align*}
With the ideal gas equation of state, the $\rho e$ is concave with respect to $\vec{U}$, see \cite{zhang2017positivity}. By Jensen's inequality, we have
\begin{align*}
\overline{\rho e}_{K_i} 
= \rho e(\overline{\vec{U}^\mathrm{P}_h}_{K_i}) 
= \rho e\Big(\sum_{j=0}^{\Nloc-1}\hat{\omega}_j \vec{U}^\mathrm{P}_{ij}\Big)
\geq \sum_{j=0}^{\Nloc-1}\hat{\omega}_j \on{\rho e(\vec{U}^\mathrm{P}_h)}{\vec{q}_{ij}} \geq \epsilon,
\end{align*}
where the $\hat{\omega}_j$ denotes the $j^\mathrm{th}$ Gauss--Lobatto quadrature weights on the reference element. Thus, the cell average $\overline{\vec{U}_h^\mathrm{P}}_{K_i}\in G^\epsilon$.
Applying the positivity-preserving limiter on points in $S_{K_i}^{\mathrm{H}}$ gives $\vec{U}_h^\mathrm{P}(\vec{x})\in G^\epsilon$, for all $\vec{x}\in S_{K_i}^{\mathrm{H}}$.
\end{proof}
The Lemma~\ref{lem:L2_proj_pos1} implies: if the positivity-preserving limiter is applied on all Lagrange node points in the last stage of SSP Runge--Kutta method on Step~1, then after taking the $L^2$ projection on Step~2 the internal energy is positive at each Lagrange node point.
The Lemma~\ref{lem:L2_proj_pos2} implies: if the solution of \eqref{eq:CNS:P_full_dis3} is positive on all Lagrange node points, then applying the positivity-preserving limiter on Step~4 guarantees the input of Step~5 is positive on set $S_h$. 

\subsection{Positivity of high-order scheme for parabolic subproblem}\label{sec:pos_P_high_order}
\par A matrix $\vecc{A}$ is monotone if all entries of its inverse are nonnegative, namely $\vecc{A}^{-1}\geq 0$. In the rest of this paper, all inequalities related with matrices are entry-wise inequalities. A matrix $\vecc{A}$ is called an M-matrix if it can be expressed in the form $\vecc{A} = s\vecc{I} - \vecc{B}$, where $\vecc{B}\geq 0$ and $s$ is greater than or equal to the spectral radius of $\vecc{B}$. A non-singular M-matrix is inverse-positive, thus is monotone \cite{plemmons1977m}. 
\par
A convenient way to obtain a sufficient condition on the positivity of internal energy is by proving the monotonicity of a system matrix. 
To be precise,  consider a linear system $(\vecc{M} + \Delta{t}\vecc{L})\vec{x} = \vec{b}$, where the matrix $\vecc{M}$ is diagonal with strictly positive diagonal entries; the matrix $\vecc{L}$ is an approximation of the Laplace operator such that $\vecc{L}\vec{1}=\vec{0}$.
Assume  the right-hand side vector satisfies $\vecc{M}^{-1} \vec{b}\geq\epsilon$,
then $(\vecc{I} + \Delta{t}\vecc{M}^{-1}\vecc{L})\vec{x} = \vecc{M}^{-1}\vec{b} \geq \epsilon$.

Since $(\vecc{I} + \Delta{t}\vecc{M}^{-1}\vecc{L})\vec{1} = \vec{1}$, 
each row of $(\vecc{I} + \Delta{t}\vecc{M}^{-1}\vecc{L})^{-1}$ sums to one.
Notice that if $\vecc{I} + \Delta{t}\vecc{M}^{-1}\vecc{L}$ is monotone, then
each row of $(\vecc{I} + \Delta{t}\vecc{M}^{-1}\vecc{L})^{-1}$ has nonnegative entries thus forms a convex combination coefficients, thus $\vec{x} \geq \epsilon$.

Since  $\vecc{M}^{-1}>0$, $(\vecc{M} + \Delta{t}\vecc{L})^{-1}\geq 0\Leftrightarrow (\vecc{I} + \Delta{t}\vecc{M}^{-1}\vecc{L})^{-1}\geq 0$. Thus, the monotonicity of $\vecc{M} + \Delta{t}\vecc{L}$ is sufficient for positivity of $\vec{x}$. 
 
\par
In order to obtain a monotone system matrix, we use either the IIPG method with $\IQ^1$ element or the spectral element method with $\IQ^k$ ($k=2,3$) element to discretize the Laplace operator $-\laplace{e}$ in \eqref{eq:time:step3_2}.

\subsubsection{Preserve positivity through the IIPG method}
Consider \eqref{eq:CNS:P_full_dis3} in a matrix formulation. The entry of a matrix with row index $\Nloc i' + j'$ and column index $\Nloc i + j$ is denoted by $[\cdot]_{i'j';ij}$. The entry of a vector with index $\Nloc i' + j'$ is denoted by $[\cdot]_{i'j'}$. 
Given $\rho^{\mathrm{H}}_h$, $\rho^{\mathrm{P}}_h$, $\vec{u}^{\ast}_h$, and $e^{\mathrm{H}}_h$, we define the following matrices and vectors:
\begin{align*}
[\vecc{A}_{\mathcal{M}}]_{i'j';ij} &= \langle\rho^\mathrm{P}_h \varphi_{ij},\varphi_{i'j'}\rangle, &
[\vecc{A}_{\mathcal{D}}]_{i'j';ij} &= a_{\mathcal{D}}(\varphi_{ij},\varphi_{i'j'}), & 
[\vec{B}_\strain]_{i'j'} &= b_\strain(\vec{u}_h^{\ast},\varphi_{i'j'}), \\
[\vec{B}_{\mathcal{M}}]_{i'j'} &= \langle\rho_h^\mathrm{H} e_h^\mathrm{H},\varphi_{i'j'}\rangle, &
[\vec{B}_{\mathcal{D}}]_{i'j'} &= b_\mathcal{D}(\varphi_{i'j'}), &
[\vec{B}_\lambda]_{i'j'} &= b_\lambda(\vec{u}_h^{\ast},\varphi_{i'j'}).
\end{align*} 
Then, the matrix formulation of \eqref{eq:CNS:P_full_dis3} reads: find vector $\vec{X}_e^\mathrm{P}$, where $[\vec{X}_e^\mathrm{P}]_{ij} = e_{ij}^\mathrm{P}$, such that:
\begin{align}\label{eq:pos:paraboloc_mat_form}
(\vecc{A}_{\mathcal{M}} + \frac{\Delta t \lambda}{\Rey}\vecc{A}_{\mathcal{D}}) \vec{X}_e^\mathrm{P}
= \vec{B}_{\mathcal{M}} + \frac{\Delta t}{\Rey}\vec{B}_{\strain} + \frac{2\Delta t}{3\Rey}\vec{B}_\lambda + \frac{\Delta t \lambda}{\Rey}\vec{B}_{\mathcal{D}}.
\end{align}
Recall we use $(k+1)^d$-point Gauss--Lobatto quadrature rule to compute all of the numerical integrals in parabolic subproblem and the bases are numerically orthogonal. The matrix $\vecc{A}_\mathcal{M}$ is diagonal with strictly positive diagonal entries. The $e_{ij}^\mathrm{P}$ represents the value of solution polynomial $e_h^\mathrm{P}$ evaluated at Gauss--Lobatto point $\vec{q}_{ij}$. 
The following lemma shows that the right-hand side of system \eqref{eq:pos:paraboloc_mat_form} is positive.
\begin{lemma}\label{lem:pos:parabolic_right_side}
On each cell $K_i\in\setE_h$, if $\rho^\mathrm{H}_h(\vec{q}_{ij})>0$ and $e^\mathrm{H}_h(\vec{q}_{ij})>0$, for all $\vec{q}_{ij}\in S^\mathrm{P}_{K_i}$. Then, under $(k+1)^d$-point Gauss--Lobatto quadrature rule, for any penalty $\sigma\geq0$ and $\tilde{\sigma}\geq0$, each entry of the right-hand side of \eqref{eq:pos:paraboloc_mat_form} is positive.
\end{lemma}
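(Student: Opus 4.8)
The plan is to show that the right-hand side vector $\vec{B}_{\mathcal{M}} + \frac{\Delta t}{\Rey}\vec{B}_\strain + \frac{2\Delta t}{3\Rey}\vec{B}_\lambda + \frac{\Delta t\lambda}{\Rey}\vec{B}_{\mathcal{D}}$ has a strictly positive entry at every index $\Nloc i' + j'$, by exploiting the numerical orthogonality of the Lagrange basis under the $(k+1)^d$-point Gauss--Lobatto rule. First I would evaluate each term at a fixed degree of freedom $\vec{q}_{i'j'}$. Because the quadrature points coincide with the Lagrange nodes and $\varphi_{i'j'}(\vec{q}_{i\nu}) = \delta_{i'i}\delta_{j'\nu}$, each volume integral collapses to a single-point contribution weighted by the (positive) Gauss--Lobatto weight $\hat\omega_{j'}$ times the cell Jacobian. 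Concretely, $[\vec{B}_{\mathcal{M}}]_{i'j'} = \langle \rho_h^{\mathrm H} e_h^{\mathrm H}, \varphi_{i'j'}\rangle$ reduces to (a positive constant)$\,\cdot\,\rho_h^{\mathrm H}(\vec{q}_{i'j'})\, e_h^{\mathrm H}(\vec{q}_{i'j'})$, which is strictly positive by the hypothesis $\rho_h^{\mathrm H}(\vec{q}_{ij})>0$ and $e_h^{\mathrm H}(\vec{q}_{ij})>0$ on $S^{\mathrm P}_{K_i}$.

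Next I would argue that the three remaining terms are all nonnegative. For $[\vec{B}_\strain]_{i'j'} = b_\strain(\vec{u}_h^\ast,\varphi_{i'j'})$, the volume part $2\int_K \strain(\vec{u}_h^\ast):\strain(\vec{u}_h^\ast)\,\varphi_{i'j'}$ evaluated by the Gauss--Lobatto rule equals (positive weight)$\,\cdot\,|\strain(\vec{u}_h^\ast)(\vec{q}_{i'j'})|^2 \ge 0$ since $\varphi_{i'j'}$ is a cardinal function, and the two face-penalty terms $\frac{\sigma}{h}\int_e \jump{\vec{u}_h^\ast}\cdot\jump{\vec{u}_h^\ast}\avg{\varphi_{i'j'}}$ and $\frac{\sigma}{h}\int_e(\vec{u}_h^\ast-\vec{u}_\mathrm{D})\cdot(\vec{u}_h^\ast-\vec{u}_\mathrm{D})\varphi_{i'j'}$ are likewise nonnegative because, under the Gauss--Lobatto face quadrature, $\varphi_{i'j'}$ and $\avg{\varphi_{i'j'}}$ take only values in $\{0,\tfrac12,1\}$ at the face nodes while the integrands are squares and $\sigma\ge 0$. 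The term $[\vec{B}_\lambda]_{i'j'} = -\int_K(\div\vec{u}_h^\ast)^2\varphi_{i'j'}$ appears with a \emph{negative} sign in \eqref{eq:pos:paraboloc_mat_form}, so $\frac{2\Delta t}{3\Rey}[\vec{B}_\lambda]_{i'j'}$ contributes a nonpositive amount; the crux of this step is therefore to show that the combination $\frac{\Delta t}{\Rey}[\vec{B}_\strain]_{i'j'} + \frac{2\Delta t}{3\Rey}[\vec{B}_\lambda]_{i'j'}$ is still nonnegative. Using the algebraic identity $\vec{\tau}(\vec{u}):\grad\vec{u} = 2\strain(\vec{u}):\strain(\vec{u}) - \tfrac{2}{3}(\div\vec{u})^2$ together with the pointwise inequality $2|\strain(\vec{u})|^2 \ge \tfrac23(\div\vec{u})^2$ (equivalently, $\vec{\tau}(\vec{u}):\strain(\vec{u})\ge 0$, which is exactly the statement that the shear-stress dissipation is nonnegative), the volume contributions combine into (positive weight)$\,\cdot\,\vec{\tau}(\vec{u}_h^\ast)(\vec{q}_{i'j'}):\grad\vec{u}_h^\ast(\vec{q}_{i'j'})\ge 0$, and the leftover penalty terms in $b_\strain$ are separately nonnegative as above. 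Finally $[\vec{B}_{\mathcal{D}}]_{i'j'} = \frac{\tilde\sigma}{h}\sum_{e\in\partial\Omega_\mathrm{D}}\int_e e_\mathrm{D}\varphi_{i'j'}$ is nonnegative provided $e_\mathrm{D}\ge 0$ on the Dirichlet boundary (a physically necessary assumption) and $\tilde\sigma\ge 0$, again because the face values of $\varphi_{i'j'}$ are nonnegative.

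Adding the four pieces, every index receives a strictly positive contribution from $[\vec{B}_{\mathcal{M}}]_{i'j'}$ and nonnegative contributions from the rest, so the right-hand side of \eqref{eq:pos:paraboloc_mat_form} is entrywise positive, which is the assertion. The step I expect to require the most care is the $b_\strain$/$b_\lambda$ combination: one must check that the pointwise dissipation bound $2|\strain|^2\ge\tfrac23(\div\vec{u})^2$ and the exact identity $\vec{\tau}:\grad\vec{u}=2|\strain|^2-\tfrac23(\div\vec{u})^2$ interact correctly with the $\tfrac{\Delta t}{\Rey}$ versus $\tfrac{2\Delta t}{3\Rey}$ weights actually appearing in \eqref{eq:pos:paraboloc_mat_form}, and that the penalty pieces of $b_\strain$ (which have no analogue in $b_\lambda$) are handled as a separate nonnegative remainder rather than being absorbed into the dissipation identity. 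The rest is bookkeeping: writing each integral in terms of Gauss--Lobatto weights and cardinal-basis values, and noting that all weights and all relevant face values of the basis functions are nonnegative.
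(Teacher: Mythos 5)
Your proposal is correct and follows essentially the same route as the paper: numerical orthogonality of the Lagrange basis under Gauss--Lobatto quadrature makes $[\vec{B}_{\mathcal{M}}]_{i'j'}$ a positive weight times $\rho^{\mathrm{H}}e^{\mathrm{H}}$ at a node, the combination $\vec{B}_\strain+\tfrac{2}{3}\vec{B}_\lambda$ is handled exactly as in the paper via the pointwise bound $\strain(\vec{u})\!:\!\strain(\vec{u})\geq\tfrac{1}{3}(\div\vec{u})^2$ for $d\leq 3$ (your phrasing through $\vec{\tau}(\vec{u})\!:\!\grad\vec{u}\geq 0$ is the same inequality), and the penalty and boundary terms are separately nonnegative because the basis values at quadrature nodes are nonnegative and $e_\mathrm{D}>0$. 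No gaps.
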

\begin{proof} 
By numerical orthogonality of the Lagrange bases with respect to the $(k+1)^d$-point Gauss--Lobatto quadrature rule, the condition $\rho^\mathrm{H}_h(\vec{q}_{ij})>0$ and $e^\mathrm{H}_h(\vec{q}_{ij})>0$, for all $\vec{q}_{ij}\in S^\mathrm{P}_{K_i}$, implies $[\vec{B}_{\mathcal{M}}]_{ij} = \Delta{x}^2 \hat{\omega}_j \rho_{ij}^\mathrm{H} e_{ij}^\mathrm{H} >0$. Here, $\hat{\omega}_j$ denotes the $j^\mathrm{th}$ Gauss--Lobatto quadrature weight on the reference element.
\par
For the second and third terms on the right-hand side of \eqref{eq:pos:paraboloc_mat_form}, we recall the support of DG basis function $\varphi_{ij}$ is cell $K_i$ and 
\begin{multline}\label{eq:pos:parabolic_rightside}
b_\strain(\vec{u}_h^{\ast},\varphi_{ij}) + \frac{2}{3}b_\lambda(\vec{u}_h^{\ast},\varphi_{ij})
= 2 \int_{K_i} \Big(\strain{(\vec{u}_h^{\ast})}:\strain{(\vec{u}_h^{\ast})} - \frac{1}{3}(\div{\vec{u}_h^{\ast}})^2\Big)\varphi_{ij}\\
+ \frac{\sigma}{h} \int_{\partial{K_i}\subset\Gammah} \jump{\vec{u}_h^{\ast}}\cdot\jump{\vec{u}_h^{\ast}}\avg{\varphi_{ij}}
+ \frac{\sigma}{h} \int_{\partial{K_i}\subset\partial\Omega_\mathrm{D}} (\vec{u}_h^{\ast}-\vec{u}_\mathrm{D})\cdot(\vec{u}_h^{\ast}-\vec{u}_\mathrm{D}) \varphi_{ij}.
\end{multline}
Then, the term $[\vec{B}_{\strain}]_{ij} + \frac{2}{3}[\vec{B}_\lambda]_{ij}$ equals to $(k+1)^d$-point Gauss--Lobatto integral of \eqref{eq:pos:parabolic_rightside}. 
By tensor inequality $\strain(\vec{u}):\strain(\vec{u}) \geq \frac{1}{d}(\div{\vec{u}})^2$, we obtain $\on{(\strain{(\vec{u}_h^{\ast})}:\strain{(\vec{u}_h^{\ast})} - \frac{1}{3}(\div{\vec{u}_h^{\ast}})^2)}{\vec{q}_{ij}} \geq 0$, for all $\vec{q}_{ij} \in S^\mathrm{P}_{K_i}$ when dimension $d\leq3$.
Notice, from the bases construction, we always have $\varphi_{i_1 j}(\vec{q}_{i_2 \nu}) = \delta_{i_1 i_2}\delta_{j \nu} \geq0$. % Although, the $\IQ^1$ Lagrange bases are non-negative and the $\IQ^k$ Lagrange bases for $k\geq2$ can be negative.
Thus, as long as the penalty $\sigma\geq0$, we have $[\vec{B}_{\strain}]_{ij} + \frac{2}{3}[\vec{B}_\lambda]_{ij}\geq 0$.
\par
Finally, it is straightforward to see the last term on the right-hand side of \eqref{eq:pos:paraboloc_mat_form} is always non-negative, since the Dirichlet boundary condition $e_\mathrm{D}>0$ and penalty $\tilde{\sigma}\geq0$.
\end{proof}
\par
In Step~3 of the fully discrete scheme, we have $\rho_h^\mathrm{P} = \rho_h^\mathrm{H}$. Furthermore, the system matrix $\vecc{A}_{\mathcal{M}} + \frac{\Delta t \lambda}{\Rey}\vecc{A}_{\mathcal{D}}$ associated with the $\IQ^1$ IIPG discretization has an M-matrix structure unconditionally. We include the proof in \ref{sec:positivity:parabolic}. Therefore, we obtain $e_h^\mathrm{H}(\vec{q}_{ij})>0 \Rightarrow e_h^\mathrm{P}(\vec{q}_{ij})>0$, for all of the Gauss--Lobatto points $\vec{q}_{ij}\in S^\mathrm{P}_{K_i}$.

\subsubsection{Preserve positivity through the spectral element method}
Except the fourth order compact finite difference scheme \cite{li2018high}, no known high order schemes have an M-matrix structure. On the other hand,
M-matrix structure is only a sufficient but rather than a necessary condition for monotonicity. In particular, a matrix is monotone if it is a product of some M-matrices. For example,
$\vecc{A}=\vecc{M}_1\vecc{M}_2$ where $\vecc{M}_1$ and $\vecc{M}_2$ are both M-matrices, then $\vecc{A}$ is still monotone  since $\vecc{A}^{-1}=\vecc{M}_2^{-1}\vecc{M}_1^{-1}\geq 0$.

The $\IQ^k$ continuous finite element method implemented by $(k+1)^d$-point Gauss--Lobatto quadrature is also called the spectral element method \cite{maday1990optimal}. 
In \cite{li2020monotonicity}, it is proven that $\IQ^2$ spectral element method is a product of two M-matrices thus is monotone for a variable coefficient elliptic operator $-\div{(a\grad{u})}+cu$ under suitable mesh constraints.
In \cite{cross2020monotonicity}, $\IQ^3$ spectral element method  is proven to be  a product of four M-matrices for the Laplacian operator thus monotone.
The monotonicity of $\IQ^2$ spectral element method has been used to construct high order accurate positivity-preserving schemes for Keller--Segel, Allen--Cahn, and Fokker--Planck equations  \cite{hu2021positivity,shen2021discrete,liu-2022-monotone}. 
\par
In this paper, we simply apply the existing monotonicity results in
$\IQ^2$ spectral element method \cite{li2020monotonicity} and 
$\IQ^3$ spectral element method \cite{cross2020monotonicity} 
to the Laplacian operator in \eqref{eq:time:step3_2} and couple it with the DG discretization \eqref{eq:CNS:P_full_dis2_1} in parabolic subproblem. 
For the sake of simplicity, consider the thermally insulating boundary condition $\grad{e}\cdot\normal = 0$ on the entire boundary of domain $\Omega$. 
Define continuous piecewise $\IQ^k$ polynomial space
\begin{align*}
\tilde{M}_h^k = \big\{\chi_h\in C(\Omega):~\forall K \in \setE_h,\, \on{\chi_h}{K} \in \IQ^k(K)\big\}.
\end{align*}
Recall in Step~3 of the fully discrete scheme, when solving \eqref{eq:CNS:P_full_dis3}, the $\rho^{\mathrm{H}}_h$, $\rho^{\mathrm{P}}_h$, $\vec{u}^{\ast}_h$, and $e^{\mathrm{H}}_h$ are given data. We replace \eqref{eq:CNS:P_full_dis3} by introducing the bilinear form $a_\mathrm{CG}: \tilde{M}_h^k\times\tilde{M}_h^k \rightarrow \IR$ and the linear form $b_\mathrm{CG}: \tilde{M}_h^k \rightarrow \IR$, as follows:
\begin{align*}
a_\mathrm{CG}(e_h, \chi_h) &= \int_\Omega \rho^\mathrm{P}_h e_h \chi_h + \frac{\Delta t\lambda}{\Rey}\int_\Omega \grad{e_h}\cdot\grad{\chi_h},\\
b_\mathrm{CG}(\chi_h) &= \int_\Omega \rho^\mathrm{H}_h e^\mathrm{H}_h \chi_h + \frac{\Delta t}{\Rey}b_\strain(\vec{u}_h^{\ast},\chi_h) + \frac{2\Delta t}{3\Rey}b_\lambda(\vec{u}_h^{\ast},\chi_h).
\end{align*}
Then, the variational formulation for solving \eqref{eq:time:step3_2} becomes: find $e^{\mathrm{P}}_h \in \tilde{M}_h^k$, such that for all $\chi_h\in\tilde{M}_h^k$, the $a_\mathrm{CG}(e_h^{\mathrm{P}}, \chi_h) = b_\mathrm{CG}(\chi_h)$ holds. 
For $\IQ^k$ scheme, applying $(k+1)^d$-point Gauss–Lobatto quadrature to compute integrals, the \eqref{eq:CNS:P_full_dis3} is replaced by: 
given $(\rho^{\mathrm{H}}_h, \rho^{\mathrm{P}}_h, \vec{u}^{\ast}_h, e^{\mathrm{H}}_h) \in M_h^k\times M_h^k\times \mathbf{X}_h^k\times M_h^k$, solve for $e^{\mathrm{P}}_h \in \tilde{M}_h^k$, such that for all $\chi_h\in\tilde{M}_h^k$,
\begin{align}\label{eq:CNS:P_full_dis3_CG}
\langle\rho^\mathrm{P}_h e_h^\mathrm{P},\chi_h\rangle + \frac{\Delta t\lambda}{\Rey}\langle\grad{e_h^\mathrm{P}},\grad{\chi_h}\rangle 
= \langle\rho_h^\mathrm{H} e_h^\mathrm{H},\chi_h\rangle + \frac{\Delta t}{\Rey}b_\strain(\vec{u}_h^{\ast},\chi_h) + \frac{2\Delta t}{3\Rey}b_\lambda(\vec{u}_h^{\ast},\chi_h).
\end{align} 
\begin{remark}
For two dimensional problems, if we set penalty $\sigma=0$, namely employ the NIPG0 method in $\IQ^2$ and $\IQ^3$ discretization for $\div{\vec{\tau}(\vec{u})}$ and $\vec{\tau}(\vec{u}):\grad{\vec{u}}$, then \eqref{eq:CNS:P_full_dis3_CG} is further simplified. We have
\begin{align*}
\langle\rho^\mathrm{P}_h e_h^\mathrm{P},\chi_h\rangle + \frac{\Delta t\lambda}{\Rey}\langle\grad{e_h^\mathrm{P}},\grad{\chi_h}\rangle 
= \langle\rho_h^\mathrm{H} e_h^\mathrm{H},\chi_h\rangle + \frac{2\Delta t}{\Rey}\langle\strain{(\vec{u}_h^{\ast})}:\strain{(\vec{u}_h^{\ast})},\chi_h\rangle - \frac{2\Delta t}{3\Rey}\langle(\div{\vec{u}_h^{\ast}})(\div{\vec{u}_h^{\ast}}),\chi_h\rangle.
\end{align*}
The above formula only involves volume integrals, which is convenient for implementation. And more importantly, with the NIPG0 method, we get rid of the face penalties, which minimizes the numerical viscosity.
\end{remark}

The identity $\langle\grad{e_h^\mathrm{P}},\grad{1}\rangle = 0$ acts in the same role as $a_{\mathcal{D}}(e_h^\mathrm{P},1) = 0$ in proving the conservation of total energy. Replacing \eqref{eq:CNS:P_full_dis3} with \eqref{eq:CNS:P_full_dis3_CG} does not affect the proof of Theorem~\ref{thm:dis_momentum_conv}. Therefore, the conservations of density, momentum, and total energy still hold.
Similar to Lemma~3, it is straightforward to verify the right-hand side vector stems from \eqref{eq:CNS:P_full_dis3_CG} is still positive.
For $\IQ^2$ spectral element scheme, by the results in Section~4 in \cite{li2020monotonicity}, we obtain a sufficient condition of monotonicity of the system matrix of \eqref{eq:CNS:P_full_dis3_CG} as follows:
\begin{subequations}\label{eq:parabolic_step_size_condition}
\begin{align}
\label{q2-cfl}
\Delta{t} > \frac{\Rey}{3\lambda} \max_{i,j}{\rho^\mathrm{H}_{ij}}\, \Delta{x}^2.
\end{align}
For $\IQ^3$ spectral element scheme, in principle it is possible to extend the same proof for $-\Delta u$ in Section~6 in \cite{cross2020monotonicity} to an operator like $-\Delta u+c u$ with a variable coefficent $c$. Thus in principle the monotonicity of the system matrix of \eqref{eq:CNS:P_full_dis3_CG} using $\IQ^3$ spectral element holds under a time step contraint like
\begin{align}
\label{q3-cfl}
\Delta t > C(\Rey,\lambda,\rho_h^\mathrm{H})\, \Delta{x}^2,
\end{align}
where $C$ is a constant depending on $\Rey,\lambda,\rho_h^\mathrm{H}$. 
\end{subequations}

We emphasize that the time step constraints \eqref{q2-cfl} and \eqref{q3-cfl} are lower bounds, i.e., the time step cannot be as small as $\Delta x^2$, which is a practical constraint, rather than an impossible one to implement. 

Finally, the unique existence of $e_h^\mathrm{P}$ is a conclusion from the monotonicity of the system matrix, since it is invertible.
Therefore, we obtain $e_h^\mathrm{H}(\vec{q}_{ij}) > 0 \Rightarrow e_h^\mathrm{P}(\vec{q}_{ij}) > 0$, for all of the Gauss--Lobatto points $\vec{q}_{ij}\in S^\mathrm{P}_{K_i}$.

\subsection{Adaptive time-stepping strategy and implementation}
\par
We use SSP Runge--Kutta method in the fully discrete scheme to solve the hyperbolic subproblem. By \cite{zhang2010positivity,zhang2017positivity}, for the compressible Euler equations on a structure mesh, a sufficient condition on preserving positivity in a single forward Euler step with step size $\Delta{t}^\mathrm{H}$ is 
\begin{align}\label{eq:hyper_CFL}
\frac{\Delta{t}^\mathrm{H}}{\Delta{x}} \max_{e}{\alpha_{e}} \leq \frac{1}{2}\hat{\omega} = \frac{1}{2}\frac{1}{N(N-1)},
\end{align}
where N is smallest integer satisfying $2N-3\geq k$ for $\IQ^k$ basis. 
For the parabolic subproblem, the $\IQ^k$ ($k=2,3$) scheme is positivity-preserving under the condition \eqref{eq:parabolic_step_size_condition}, which is a lower bound on the time step size.
These constraints together imply that for a simulation the mesh resolution $\Delta{x}$ should be small enough such that a feasible time step size exist when solving subproblem ($\mathrm{H}$) followed by subproblem ($\mathrm{P}$) in Strang splitting sequentially.
However, we should not simply use a time step suggested by these constraints for the compressible NS equations because of the following reasons.
\begin{enumerate}[leftmargin=0.5cm]
\item Mathematically, the \eqref{eq:parabolic_step_size_condition} and \eqref{eq:hyper_CFL} can be achieved at the same time if $\Delta x$ is small enough. However, \eqref{eq:parabolic_step_size_condition} and \eqref{eq:hyper_CFL} are only sufficient, but not necessary for  preserving positivity in practice.
\item To enforce \eqref{eq:hyper_CFL} in SSP Runge--Kutta method, we need to estimate $\max_{e}{\alpha_{e}}$ for each stage. However, it is difficult to accurately estimate this quantity for the two inner time stages in a third order SSP Runge--Kutta method.
\item  The wave speed contains $\sqrt{\gamma p/\rho}$, which will be  inaccurate for extremely low density problems due to the round-off errors.
\end{enumerate}
Instead, we can apply the following simple adaptive time-stepping strategy. 
At each time step $t^n$, given $\vec{U}_h^n(\vec{x})\in G^\epsilon$ for all $\vec{x}\in S_h$, we start with a trial step size $\Delta{t}^\mathrm{trial}$ by  
\begin{align}\label{eq:hyperbolic_CFL}
\Delta{t}^\mathrm{trial} = a \hat{\omega} \frac{1}{\max_{e}{\alpha_{e}}} \Delta x,
\end{align}
where $a$ is a parameter.  We will specify its value in our experiments, see Section~\ref{sec:experiments}.  
For solving hyperbolic subproblem, the time-stepping strategy is the same as in Section~3.2 in \cite{wang2012robust}, which is listed below for completeness:
\begin{itemize}[leftmargin=0.5cm]
\item[] {\bf Algorithm~H}. At time $t^n$, select a trial hyperbolic step size $\Delta{t}^\mathrm{H}$. The input DG polynomial $\vec{U}_h^n$ satisfies $\vec{U}_h^n(\vec{x})\in G^\epsilon$, for all $\vec{x}\in S_h$. The parameter $\epsilon$ can be set as $\epsilon = \min\{10^{-13}, \overline{\rho}^n_K, \overline{\rho e}^n_K\}$. 
\item[] Step~H1. Given DG polynomial $\vec{U}_h^n$, compute the first stage to obtain $\vec{U}_h^{(1)}$.
\begin{itemize}[leftmargin=1.0cm, label=\labelitemi]
\item If the cell averages $\overline{\vec{U}}_K^{(1)} \in G^\epsilon$, for all $K\in\setE_h$, then apply a positivity-preserving limiter to obtain $\widetilde{\vec{U}}_h^{(1)}$ and go to Step H2.
\item Otherwise, recompute the first stage with halved step size $\Delta{t}^\mathrm{H} \leftarrow \frac{1}{2}\Delta{t}^\mathrm{H}$. Notice, when $\Delta{t}^\mathrm{H}$ satisfies the hyperbolic CFL \eqref{eq:hyper_CFL}, the $\overline{\vec{U}}_K^{(1)} \in G^\epsilon$ is guaranteed.
\end{itemize}
\item[] Step~H2. Given DG polynomial $\widetilde{\vec{U}}_h^{(1)}$, compute the second stage to obtain $\vec{U}_h^{(2)}$.
\begin{itemize}[leftmargin=1.0cm, label=\labelitemi]
\item If the cell averages $\overline{\vec{U}}_K^{(2)} \in G^\epsilon$, for all $K\in\setE_h$, then apply a positivity-preserving limiter to obtain $\widetilde{\vec{U}}_h^{(2)}$ and go to Step H3.
\item Otherwise, return to Step H1 and restart the computation with halved step size $\Delta{t}^\mathrm{H} \leftarrow \frac{1}{2}\Delta{t}^\mathrm{H}$. Notice, even if $\Delta{t}^\mathrm{H}$ satisfies the constraint \eqref{eq:hyper_CFL} in Step H1, the $\overline{\vec{U}}_K^{(2)}$ still may not belong to set $G^\epsilon$, since \eqref{eq:hyper_CFL} is based on $\vec{U}_h^n$ rather than $\widetilde{\vec{U}}_h^{(1)}$.
\end{itemize}
\item[] Step~H3. Given DG polynomial $\widetilde{\vec{U}}_h^{(2)}$, compute the third stage to obtain $\vec{U}_h^{(3)}$.
\begin{itemize}[leftmargin=1.0cm, label=\labelitemi]
\item If the cell averages $\overline{\vec{U}}_K^{(3)} \in G^\epsilon$, for all $K\in\setE_h$, then apply a positivity-preserving limiter to obtain $\vec{U}_h^{\mathrm{H}}$. We finish the current SSP Runge--Kutta.
\item Otherwise, return to Step H1 and restart the computation with halved step size $\Delta{t}^\mathrm{H} \leftarrow \frac{1}{2}\Delta{t}^\mathrm{H}$. Notice, even if $\Delta{t}^\mathrm{H}$ satisfies the constraint \eqref{eq:hyper_CFL} in Step H1, the $\overline{\vec{U}}_K^{(3)}$ still may not belong to set $G^\epsilon$, since \eqref{eq:hyper_CFL} is based on $\vec{U}_h^n$ rather than $\widetilde{\vec{U}}_h^{(2)}$.
\end{itemize}
\end{itemize}
 The adaptive time-stepping strategy for solving the compressible NS equations can be now defined as follows.
At initial, the $\vec{U}_h^0$ is constructed by $L^2$ projection of $\vec{U}_0$ with a positive-preserving limiter on $S_h$, e.g., we have $\vec{U}_h^0(\vec{x})\in G^\epsilon$, for all $\vec{x}\in S_h$.
\begin{itemize}[leftmargin=0.5cm]
\item[] {\bf Algorithm~CNS}. At time $t^n$, select $\Delta{t} = \Delta{t}^\mathrm{trial}$ as a desired time step size. The input DG polynomial $\vec{U}_h^n$ satisfies $\vec{U}_h^n(\vec{x})\in G^\epsilon$, for all $\vec{x}\in S_h$. The parameter  $\epsilon$ is taken as $\epsilon = \min\{10^{-13}, \overline{\rho}^n_K, \overline{\rho e}^n_K\}$.
\item[] Step~CNS1. Given DG polynomial $\vec{U}_h^n$, solve subproblem $(\mathrm{H})$ form time $t^n$ to $t^{n} + \frac{\Delta t}{2}$.
\begin{itemize}[leftmargin=1.0cm, label=\labelitemi]
\item Set $m=0$. Let $t^{n,0} = t^{n}$ and $\vec{U}_h^{n,0} = \vec{U}_h^n$.
\item Given $\vec{U}_h^{n,m}$ at time $t^{n,m}$, solve $(\mathrm{H})$ to compute $\vec{U}_h^{n,m+1}$ by the Algorithm~H. Let $t^{n,m+1} = t^{n,m} + \Delta{t}^\mathrm{H}$. 
If $t^{n,m+1} = t^{n} + \frac{\Delta t}{2}$, then apply a positive-preserving limiter for $\vec{U}_h^{n,m+1}$ on all Gauss--Lobatto points in $S^\mathrm{P}_K$, for all $K\in\setE_h$, we obtain $\vec{U}_h^\mathrm{H}$. Go to Step~CNS2. Otherwise, set $m\leftarrow m+1$ and repeat solving $(\mathrm{H})$ by Algorithm~H until reach $t^{n} + \frac{\Delta t}{2}$. 
Notice, when compute $\vec{U}_h^{n,m+1}$, we can take the minimum of $\Delta{t}^\mathrm{trial}$ and $t^{n} + \frac{\Delta t}{2}-t^{n,m}$ as a trail $\Delta{t}^\mathrm{H}$ to start Algorithm~H.
\end{itemize}
Step~CNS2. Given DG polynomial $\vec{U}_h^\mathrm{H}$, take $L^2$ projection to compute $(\vec{u}_h^\mathrm{H}, e_h^\mathrm{H})$.
\item[] Step~CNS3. Given DG polynomials $(\rho_h^\mathrm{H}, \vec{u}_h^\mathrm{H}, e_h^\mathrm{H})$, solve subproblem $(\mathrm{P})$ form time $t^n$ to $t^{n} + \Delta t$. 
\begin{itemize}[leftmargin=1.0cm, label=\labelitemi]
\item If a negative internal energy $e^\mathrm{P}_h(\vec{q}_{ij})$ emerge, then goto Step~CNS1 and restart the computation with doubled time step size $\Delta t \leftarrow 2\Delta t$. 
\item Otherwise, go to Step~CNS4. Notice, for $\IQ^k$ ($k=2,3$) scheme, when $\Delta{t}$ satisfies \eqref{eq:parabolic_step_size_condition}, the positivity of internal energy is guaranteed.
\end{itemize}
Step~CNS4. Given DG polynomials $(\rho_h^\mathrm{P}, \vec{u}_h^\mathrm{P}, e_h^\mathrm{P})$, take $L^2$ projection follows by applying a positivity-preserving limiter on all points in $S_h$ to compute $\vec{U}_h^\mathrm{P}$.
\item[] Step~CNS5. Given DG polynomial $\vec{U}_h^\mathrm{P}$, use adaptive time-stepping strategy to solve subproblem $(\mathrm{H})$ form time $t^n + \frac{\Delta t}{2}$ to $t^{n} + \Delta{t}$.
\end{itemize}

Notice that the time-stepping strategy above can easily result in an endless loop for a general spatial discretization. However,  since \eqref{eq:parabolic_step_size_condition} and \eqref{eq:hyper_CFL} are sufficient conditions for positivity,  \eqref{eq:parabolic_step_size_condition} and \eqref{eq:hyper_CFL}  ensure that there will be no endless loops when using this  time-stepping strategy with the fully discretized scheme in this paper.
\begin{remark}
Our $\IQ^1$ DG scheme for solving subproblem $(\mathrm{P})$ is unconditional positivity-preserving, since the associated system matrix enjoys an M-matrix structure unconditionally, see \ref{sec:positivity:parabolic}. Therefore, for the $\IQ^1$ DG scheme, we do not need to adapt time step size with respect to the parabolic subproblem, i.e., Step CNS3 always passes without recomputation.
In practice, we can relax the condition for doubling time step size in Step CNS3, since it is not necessary to request the internal energy to be positive at each Gauss--Lobatto point. We can double the time step size only when a negative cell average $\overline{\vec{U}^\mathrm{P}_h}_{K}$ in Step CNS4 emerges.
We only observed Step CNS3 recomputation in the first several time steps of $\IQ^2$ and $\IQ^3$ Sedov blast wave simulations. For all of the rest numerical experiments in Section~\ref{sec:experiments}, Step CNS3 recomputation is not triggered.
\end{remark}

%%%%%%%%%%%%%%%%%%%%%%%%%%%%%%%%%%%%%%%%%%%%%%%%%%%%%%%%%%%%%%%%%%%%%%%%%%%%%%%%%%%%%%%%%%%%%%%%%%%%%%%%%%%%%%%%%%%%%%%
\section{Numerical tests}\label{sec:experiments}
%%%%%%%%%%%%%%%%%%%%%%%%%%%%%%%%%%%%%%%%%%%%%%%%%%%%%%%%%%%%%%%%%%%%%%%%%%%%%%%%%%%%%%%%%%%%%%%%%%%%%%%%%%%%%%%%%%%%%%%
We consider some representative tests for validating our numerical scheme in one and two-dimensional spaces, including  the Lax shock tube, the double rarefraction, Sedov blast wave, shock diffraction, shock reflection, and shock reflection-diffraction problems.
\par
The parameters for all the tests are as follows. We use the ideal gas constants $\gamma = 1.4$ and Prandtl number $\Pr = 0.72$.
For the penalty parameters in IPDG method for solving ($\mathrm{P}$),
in the $\IQ^1$ scheme, we set $\sigma = 2$ on $\Gammah$, $\sigma = 4$ on $\partial{\Omega}$, and $\tilde{\sigma} = 2$; 
in the $\IQ^2$ and $\IQ^3$ schemes, we take NIPG0 method, namely set penalty $\sigma = 0$ on all faces. Since we use the continuous finite element to discretize the term $-\laplace{e}$ in $\IQ^2$ and $\IQ^3$ spaces, thus there is no $\tilde{\sigma}$ involved.

We emphasize that only the positivity-preserving limiter is used in the Runge--Kutta DG scheme for the hyperbolic subproblem, and no limiters are used in the parabolic subproblem, even though other limiters, such as TVB limiter \cite{zhang2012maximum} and WENO type limiters \cite{qiu2005runge,zhong2013simple,zhu2013runge}, for reducing oscillations could be used to improve quality of numerical solutions.

\subsection{Spatial order of accuracy for smooth solutions in two dimensions}
We test the accuracy in space for smooth solutions.
We utilize the manufactured solution method on domain $\Omega=[0,1]^2$ and set the end time $T = 0.1024$. The prescribed density, velocity, and internal energy are as follows:
\begin{align*}
\rho &= \exp{(-t)} \sin{2\pi (x+y)} + 2,\\
\vec{u} &= 
\begin{bmatrix}
\exp{(-t)} \cos{(2\pi x)} \sin{(2\pi y)} + 2\\
\exp{(-t)} \sin{(2\pi x)} \cos{(2\pi y)} + 2
\end{bmatrix},\\
e &= \textstyle{\frac{1}{2}}\exp{(-t)} \cos{(2\pi x)}\cos{(2\pi y)} + 1.
\end{align*} 
The total energy and pressure are computed by $E = \rho e + \frac{1}{2}\rho \norm{\vec{u}}{}^2$ and $p = (\gamma-1)\rho e$.
The system right-hand side functions are evaluated from above manufactured solutions, as well as the initial and boundary conditions are imposed by the same prescribed solutions.
\par
We choose $\Rey = 1$ and $\lambda=1$ and use the same IPDG penalties as in the physical simulations for solving ($\mathrm{P}$), e.g., for $\IQ^1$ scheme, we set $\sigma = 2$ on $\Gammah$, $\sigma = 4$ on $\partial{\Omega}$, and $\tilde{\sigma} = 2$; for $\IQ^2$ and $\IQ^3$ schemes, we take NIPG0 method by setting penalty $\sigma = 0$ on all faces. 
Note, there is no parameter $\tilde{\sigma}$ involved in $\IQ^2$ and $\IQ^3$ schemes, since we use the continuous finite element to discrete the term $-\laplace{e}$.
\par 
We obtain spatial convergence rates by computing the solutions on a sequence of uniformly refined meshes with fixed time step size $\Delta t = 2^{-4}\cdot10^{-4}$. 
This time step size is small enough, such that the spatial error dominates and the hyperbolic CFL is satisfied. 
Define the discrete $L^2_h$ error of density by
\begin{align*}
\norm{\rho_h^{n}-\rho(t^n)}{L^2_h}^2 = {\Delta x}^2 \sum_{i=0}^{\Nel-1} \sum_{\nu=0}^{N_\mathrm{q}^\mathrm{H, vol}-1}\omega_\nu \Big|\sum_{j=0}^{\Nloc-1}\rho_{ij}^n\,\hat{\varphi}_j(\hat{\vec{q}}_\nu) - \rho(t^n)\circ \vec{F}_i(\hat{\vec{q}}_\nu)\Big|^2,
\end{align*}
where $\omega_\nu$ and $\hat{\vec{q}}_\nu$ are the Gauss quadrature weights and points used in evaluating volume integrals in ($\mathrm{H}$).
The discrete $L^2_h$ errors for momentum and total energy are measured similarly.
If $\mathtt{err}_{\Delta x}$ denotes the error on a mesh with resolution $\Delta x$, then the rate is given by $\ln(\mathtt{err}_{\Delta x}/\mathtt{err}_{\Delta x/2})/\ln{2}$.
When the time step size is sufficiently small, such that the spatial error dominates, we observe second order convergence for $\IQ^1$ and $\IQ^2$ schemes and fourth order convergence for $\IQ^3$ scheme, see Table~\ref{tab:2D_convergence}. 
For odd-order spaces, we obtain the optimal order of convergence.
Since the NIPG method is suboptimal in even-order spaces, a second order convergence for $\IQ^2$ scheme is as expected. 
\begin{table}[htbp!]
\centering
\begin{tabularx}{\linewidth}{@{~}c@{~}|c@{~}|C@{~}|c@{~}|C@{~}|c@{~}|C@{~}|c@{~}}
\toprule
$k$ & $\Delta x=\Delta y$ & $\|\rho_h^{N_T}-\rho(T)\|_{L^2_h}$ & rate & $\|\vec{m}_h^{N_T}-\vec{m}(T)\|_{L^2_h}$ & rate & $\|E_h^{N_T}-E(T)\|_{L^2_h}$ & rate\\
\midrule
1 & $1/2^3$ & $6.397\cdot10^{-2}$ & ---   & $2.144\cdot10^{-1}$ & ---   & $4.392\cdot10^{-1}$ & ---   \\
~ & $1/2^4$ & $1.978\cdot10^{-2}$ & 1.693 & $5.297\cdot10^{-2}$ & 2.017 & $1.069\cdot10^{-1}$ & 2.039 \\
~ & $1/2^5$ & $5.194\cdot10^{-3}$ & 1.929 & $1.288\cdot10^{-2}$ & 2.040 & $2.729\cdot10^{-2}$ & 1.970 \\
\midrule
2 & $1/2^4$ & $9.257\cdot10^{-3}$ & ---   & $2.519\cdot10^{-2}$ & ---   & $4.538\cdot10^{-2}$ & ---   \\
~ & $1/2^5$ & $2.603\cdot10^{-3}$ & 1.830 & $7.005\cdot10^{-3}$ & 1.847 & $1.248\cdot10^{-2}$ & 1.863 \\
~ & $1/2^6$ & $6.847\cdot10^{-4}$ & 1.927 & $1.838\cdot10^{-3}$ & 1.930 & $3.327\cdot10^{-3}$ & 1.907 \\
\midrule
3 & $1/2^1$ & $1.100\cdot10^{-1}$ & ---   & $3.353\cdot10^{-1}$ & ---   & $5.739\cdot10^{-1}$ & ---   \\
~ & $1/2^2$ & $1.408\cdot10^{-2}$ & 2.996 & $3.645\cdot10^{-2}$ & 3.202 & $6.853\cdot10^{-2}$ & 3.066 \\
~ & $1/2^3$ & $9.518\cdot10^{-4}$ & 3.887 & $2.360\cdot10^{-3}$ & 3.949 & $4.663\cdot10^{-3}$ & 3.878 \\
\bottomrule
\end{tabularx}
\caption{Accuracy test: the $\IQ^k$ scheme using a very small time step for a smooth solution, where $k\in\{1,2,3\}$, errors and convergence rates for density, momentum, and total energy. }
\label{tab:2D_convergence}
\end{table}

\subsection{Lax shock tube problem}
The Lax shock tube problem a classical benchmark problem for gas dynamics equations. We choose the computational domain $\Omega = [-5,5]$ and set the simulation end time $T=1.3$. The initial condition is prescribed as follows:
\begin{align*}
\transpose{[\rho_0, u_0, p_0]} = 
\begin{cases}
\transpose{[0.445,\, 0.698,\, 3.528]} & \text{if}~~x\in[-5,0),\\
\transpose{[0.5,\, 0,\, 0.571]} & \text{if}~~x\in[0,5].
\end{cases}
\end{align*}
In addition, the Dirichlet boundary conditions $\transpose{[\rho, u, p]} = \transpose{[0.445,\, 0.698,\, 3.528]}$ on the left end of domain $\Omega$ and $\transpose{[\rho, u, p]} = \transpose{[0.5,\, 0,\, 0.571]}$ on the right end of domain $\Omega$ are supplemented.
\par
We uniformly partition domain $\Omega$ into $512$ cells. For this one-dimensional problem, the $\IQ^1$ scheme is considered. We take the parameter $a=0.125$ in \eqref{eq:hyperbolic_CFL} for adaptive time step size.
%The penalties of DG method for solving ($\mathrm{P}$) are: $\sigma = 2$ on $\Gammah$, $\sigma = 4$ on $\partial{\Omega}$, and $\tilde{\sigma} = 2$.
The Figure~\ref{fig:lax_shock_tub} shows simulation results of Reynolds number $\Rey=100$ and $\Rey=1000$.
The reference solution is generated by a second order finite difference scheme using a fifth order positivity-preserving WENO flux for $\vec{F}^\mathrm{a}$ with a second order approximation for diffusion on a mesh of $64000$ points \cite{zhang2017positivity}.
\begin{figure}[ht!]
\begin{center}
\begin{tabularx}{0.95\linewidth}{@{}c@{~~}c@{~}c@{~}c@{}}
\begin{sideways}{\footnotesize $\hspace{1.5cm} \Rey=100 \quad$}\end{sideways} &
\includegraphics[width=0.3\textwidth]{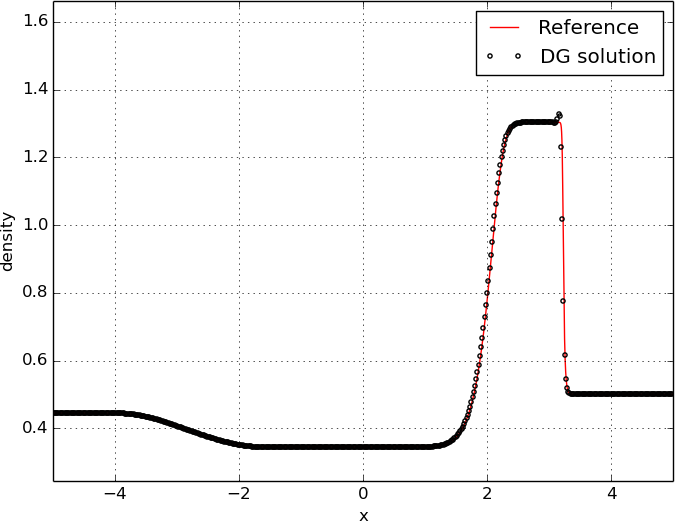} &
\includegraphics[width=0.3\textwidth]{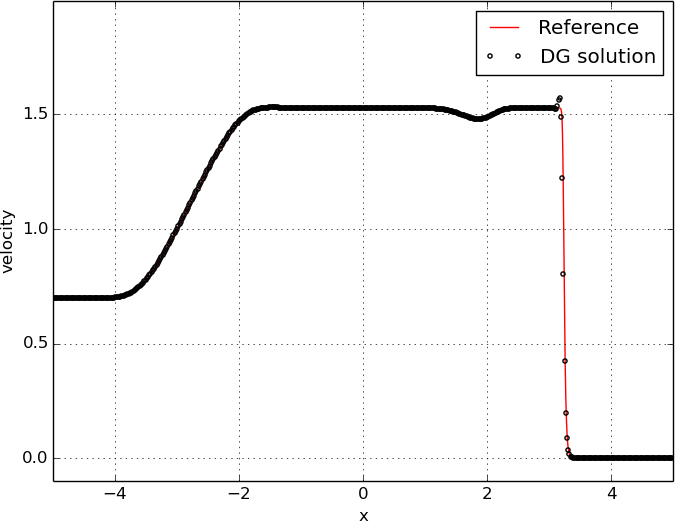} &
\includegraphics[width=0.3\textwidth]{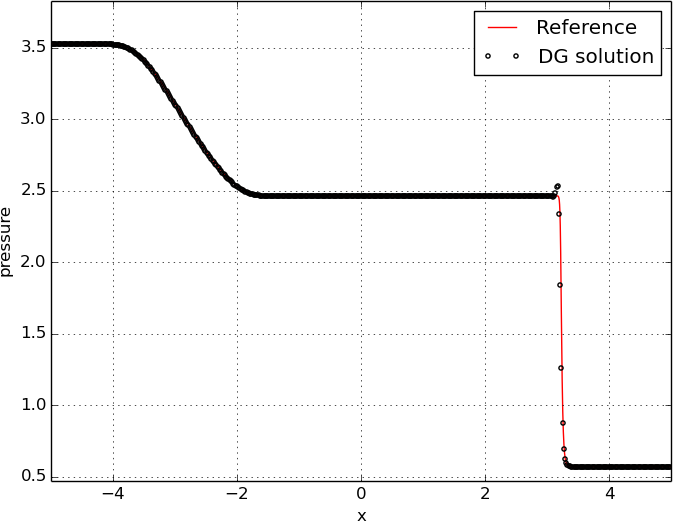} \\
%~ & density & velocity & pressure \\
\begin{sideways}{\footnotesize $\hspace{1.5cm} \Rey=1000 \quad$}\end{sideways} &
\includegraphics[width=0.3\textwidth]{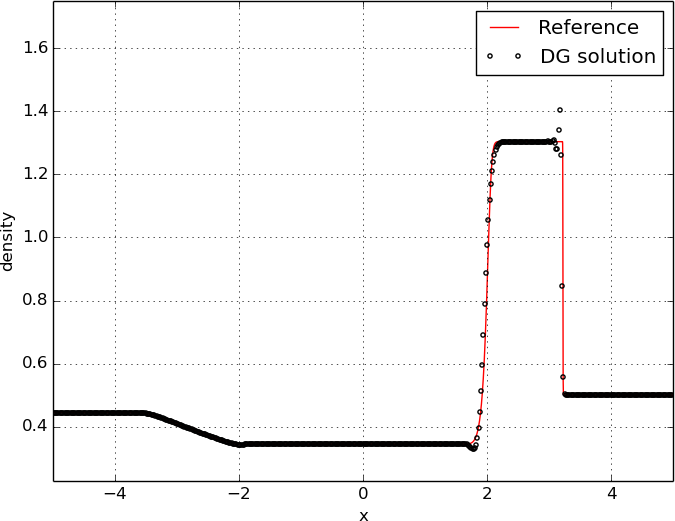} &
\includegraphics[width=0.3\textwidth]{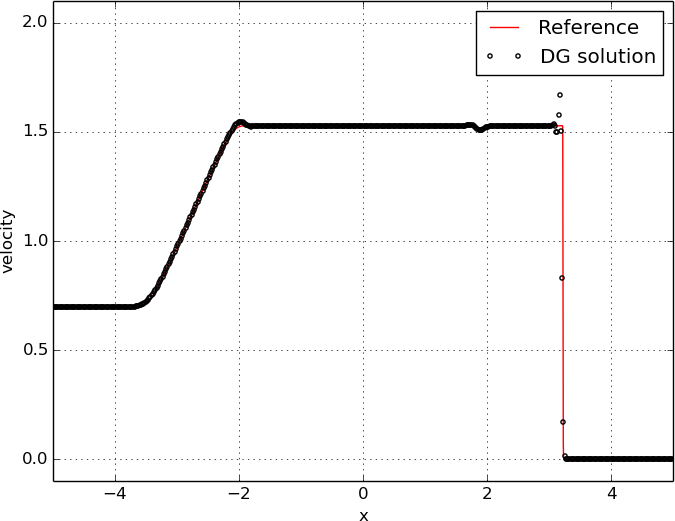} &
\includegraphics[width=0.3\textwidth]{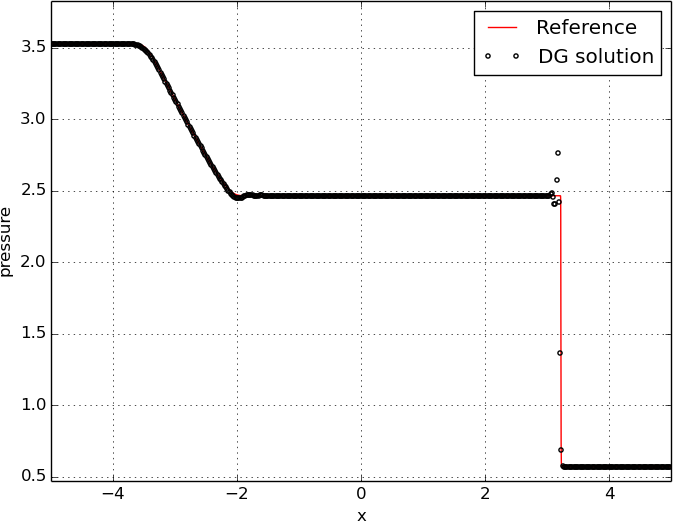} \\
~ & density & velocity & pressure \\
\end{tabularx}
\caption{Lax shock tube: the $\IQ^1$ scheme with only the positivity-preserving limiter on $512$ uniform cells. The snapshots are taken at $T=1.3$. Only cell averages are plotted.}
\label{fig:lax_shock_tub}
\end{center}
\end{figure}

\subsection{Double rarefaction}
This Riemann problem contains low density and low pressure. We choose the computational domain $\Omega = [-1,1]$ and set the simulation end time $T=0.6$. The initial condition is prescribed as follows:
\begin{align*}
\transpose{[\rho_0, u_0, p_0]} = 
\begin{cases}
\transpose{[7,\, -1,\, 0.2]} & \text{if}~~x\in[-1,0),\\
\transpose{[7,\, 1,\, 0.2]} & \text{if}~~x\in[0,1].
\end{cases}
\end{align*}
In addition, the Dirichlet boundary conditions $\transpose{[\rho, u, p]} = \transpose{[7,\, -1,\, 0.2]}$ on the left end of domain $\Omega$ and $\transpose{[\rho, u, p]} = \transpose{[7,\, 1,\, 0.2]}$ on the right end of domain $\Omega$ are supplemented.
\par
We uniformly partition domain $\Omega$ into $512$ cells. For this one-dimensional problem, the $\IQ^1$ scheme is considered. We take the parameter $a=0.125$ in \eqref{eq:hyperbolic_CFL} for adaptive time step size.
%The penalties of DG method for solving ($\mathrm{P}$) are: $\sigma = 2$ on $\Gammah$, $\sigma = 4$ on $\partial{\Omega}$, and $\tilde{\sigma} = 2$.
The Figure~\ref{fig:double_rare} shows simulation results of Reynolds number $\Rey=1000$.
The reference solution is generated by a second order finite difference scheme on a mesh of $32000$ points \cite{zhang2017positivity}.
\begin{figure}[ht!]
\begin{center}
\begin{tabularx}{0.95\linewidth}{@{}c@{~~}c@{~~}c@{}}
\includegraphics[width=0.285\textwidth]{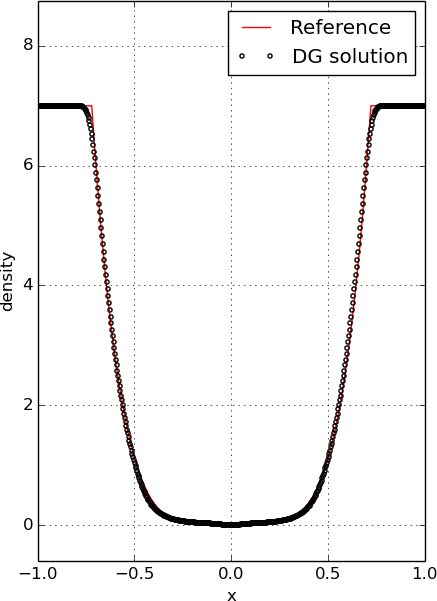} &
\includegraphics[width=0.3\textwidth]{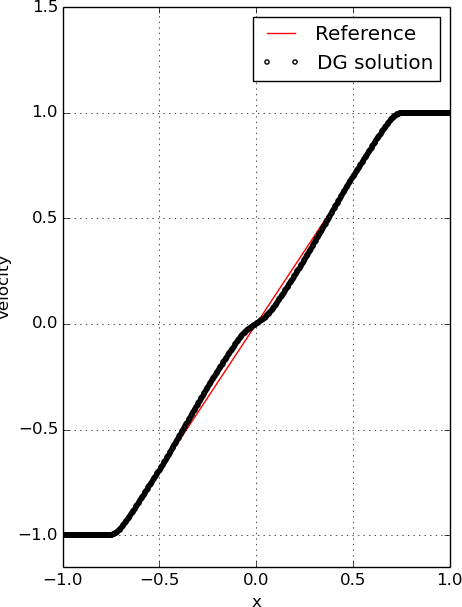}&
\includegraphics[width=0.3\textwidth]{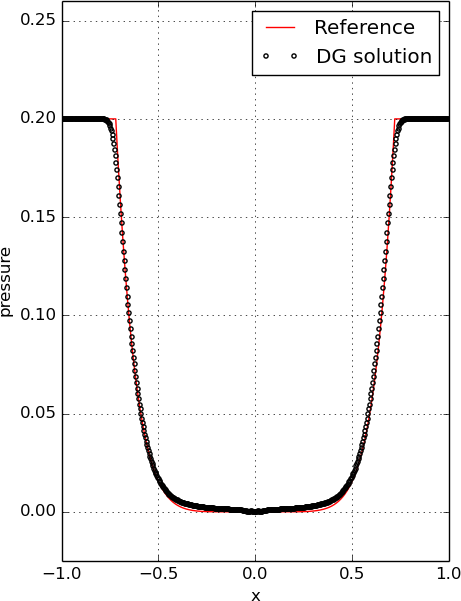}\\
density & velocity & pressure \\
\end{tabularx}
\caption{Double rarefaction: the $\IQ^1$ scheme with only the positivity-preserving limiter on $512$ uniform cells. The snapshots are taken at $T=0.6$. Only cell averages are plotted.}
\label{fig:double_rare}
\end{center}
\end{figure}

%%%%%%%%%%%%%%%%%%%%%%%%%%%%%%%%%%%%%%%%%%%%%%%%%%%%%%%%%%%%%%%%%%%%%%%%%%%%%%%%%%%%%%%%%%%%%%%%%%%%%%%%%%%%%%%%%%%%%%%
\subsection{Sedov blast wave}
%%%%%%%%%%%%%%%%%%%%%%%%%%%%%%%%%%%%%%%%%%%%%%%%%%%%%%%%%%%%%%%%%%%%%%%%%%%%%%%%%%%%%%%%%%%%%%%%%%%%%%%%%%%%%%%%%%%%%%%
The Sedov blast wave involves low density, low pressure, and a strong shock, which is of great utility as a verification test for a positivity-preserving scheme.
\par
We choose the computational domain $\Omega = [0,1.1]^2$ and set the simulation end time $T = 1$. We uniformly partition domain $\Omega$ by square cells with mesh resolution $\Delta x = 1.1/320$. 
The initials are prescribed as piecewise constants: density $\rho_0 = 1$ and velocity $\vec{u}_0 = \vec{0}$, for all points in $\Omega$; the total energy $E_0$ equals to $10^{-12}$ everywhere except the cell at the lower left corner, where $0.244816/{\Delta x}^2$ is used.
The boundary conditions are as follows. In subproblem ($\mathrm{H}$), we utilize reflective boundary condition on the left and bottom edges. The outflow boundary condition is employed on the right and top edges. In subproblem ($\mathrm{P}$), we supplement Neumann-type boundary conditions for both velocity and internal energy.
\par
We take parameter $a=0.5$ in \eqref{eq:hyperbolic_CFL} for $\IQ^1$ scheme and $a=1$ in \eqref{eq:hyperbolic_CFL} for $\IQ^2$ and $\IQ^3$ schemes for adaptive time step size.
The Figure~\ref{fig:sedov} displays snapshots of the density field at time $T=1$ with Reynolds number $\Rey=200$ and $\Rey = 1000$. The results are comparable to those in literature, e.g., \cite{zhang2017positivity}.
\begin{figure}[ht!]
\begin{center}
\begin{tabularx}{\linewidth}{@{}c@{~~}c@{~}c@{~}c@{~}c@{}}
\begin{sideways}{\footnotesize $\hspace{1.85cm} \Rey=200 \quad$}\end{sideways} &
\includegraphics[width=0.3\textwidth]{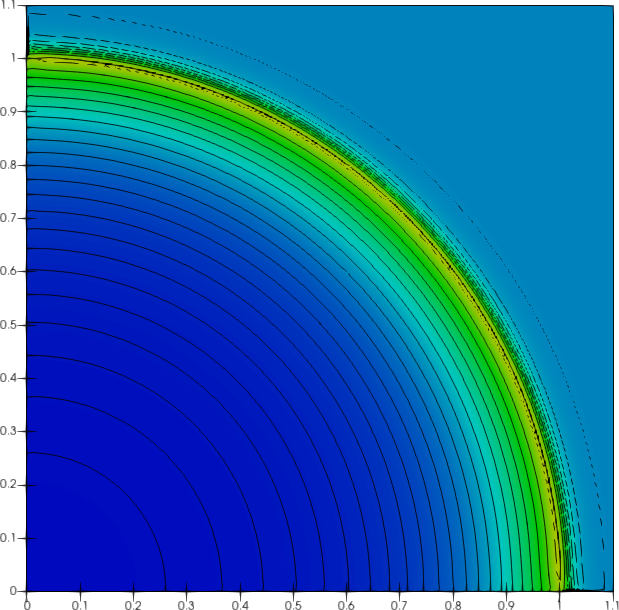} &
\includegraphics[width=0.3\textwidth]{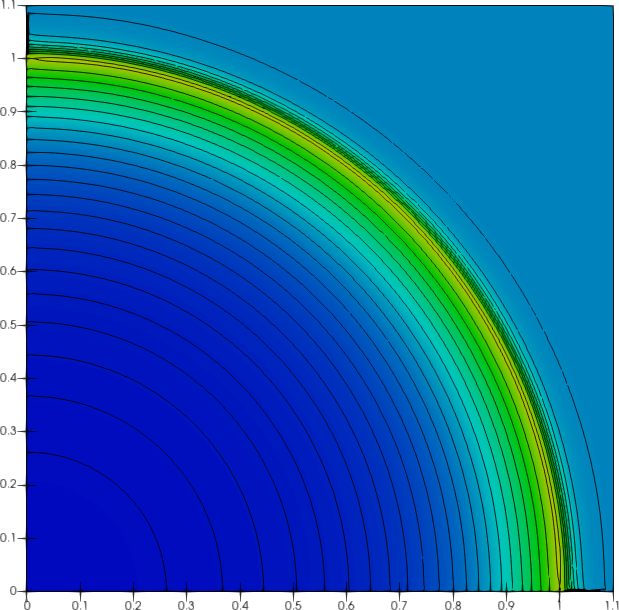} &
\includegraphics[width=0.3\textwidth]{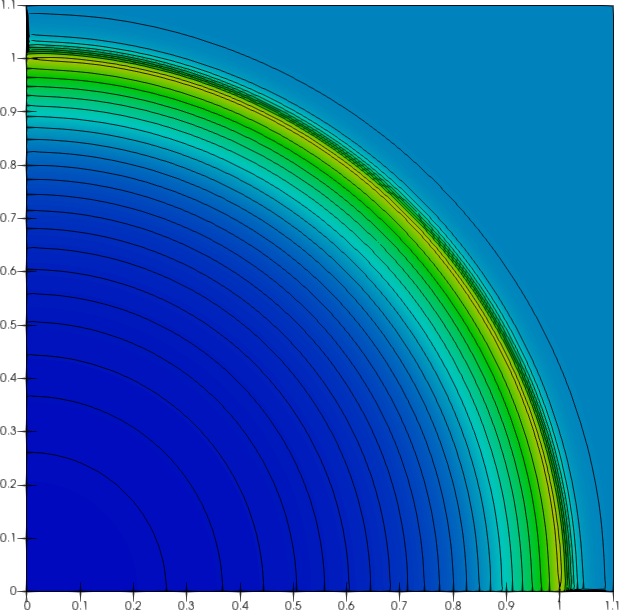} &
\includegraphics[width=0.045\textwidth]{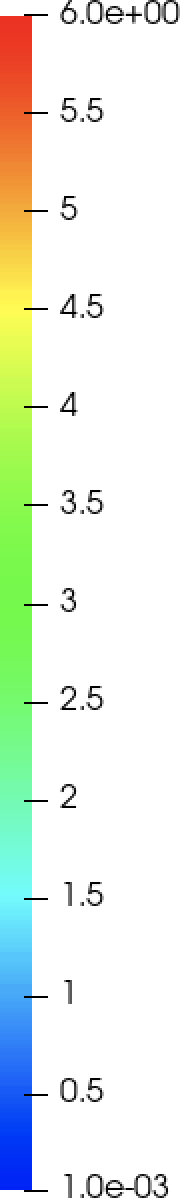} \\
\begin{sideways}{\footnotesize $\hspace{1.8cm} \Rey=1000 \quad$}\end{sideways} &
\includegraphics[width=0.3\textwidth]{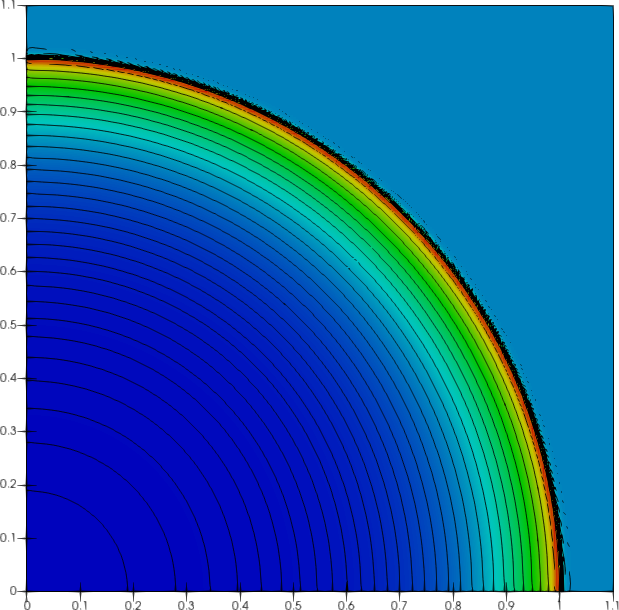} &
\includegraphics[width=0.3\textwidth]{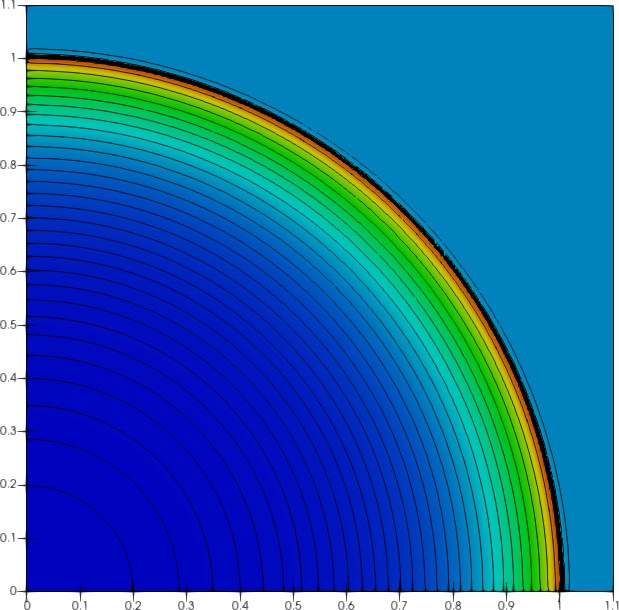} &
\includegraphics[width=0.3\textwidth]{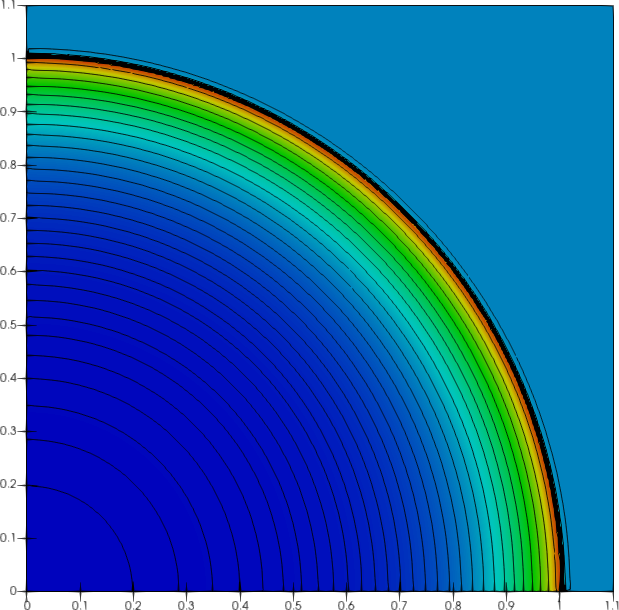} &
\includegraphics[width=0.045\textwidth]{{Figures/sedov/color_bar}.png}\\
\end{tabularx}
\caption{2D Sedov blast wave. From left to right: the $\IQ^1$, $\IQ^2$, and $\IQ^3$ schemes with only the positivity-preserving limiter on a $320\times 320$ uniform mesh. The snapshots of density profile are taken at $T=1$. Plot of density: $50$ exponentially distributed contour lines of density from $0.001$ to $6$.}
\label{fig:sedov}
\end{center}
\end{figure}

%%%%%%%%%%%%%%%%%%%%%%%%%%%%%%%%%%%%%%%%%%%%%%%%%%%%%%%%%%%%%%%%%%%%%%%%%%%%%%%%%%%%%%%%%%%%%%%%%%%%%%%%%%%%%%%%%%%%%%%
\subsection{Shock diffraction}
%%%%%%%%%%%%%%%%%%%%%%%%%%%%%%%%%%%%%%%%%%%%%%%%%%%%%%%%%%%%%%%%%%%%%%%%%%%%%%%%%%%%%%%%%%%%%%%%%%%%%%%%%%%%%%%%%%%%%%%
Let the computational domain $\Omega$ be the union of $[0,1]\times[6,12]$ and $[1,13]\times[0,12]$. We select the simulation end time $T = 2.3$.
The initial condition is a pure right-moving shock of Mach number $5.09$, initially located at $\{x=0.5, 6\leq y\leq 12\}$, moving into undisturbed air ahead of the shock with a density of $1.4$ and a pressure of $1$.
For the hyperbolic subproblem, the left boundary of $\Omega$ is inflow, the right and bottom boundaries of $\Omega$ are outflow, the fluid--solid boundaries $\{y=6, 0\leq x\leq 1\}$ and $\{x=1, 0\leq y\leq 6\}$ are reflective, and the flow values on top boundary are set to describe the exact motion of the Mach $5.09$ shock. %In subproblem ($\mathrm{P}$), associated with the inflow and following shock moving boundaries in hyperbolic subproblem, we supplement Dirichlet boundary conditions. For the rest boundaries, we apply Neumann-type boundary conditions.
\par
We uniformly partition $\Omega$ by square cells with mesh resolution $\Delta x = 1/96$ for $\IQ^1$ scheme and $\Delta x = 1/64$ for $\IQ^2$ and $\IQ^3$ schemes, respectively. 
We take parameter $a=0.5$ in \eqref{eq:hyperbolic_CFL} for $\IQ^1$ scheme and $a=1$ in \eqref{eq:hyperbolic_CFL} for $\IQ^2$ and $\IQ^3$ schemes for adaptive time step size.
The diffraction of high-speed shocks at a sharp corner generates low density and low pressure. We compare two groups of simulations with Reynolds number $\Rey=200$ and $\Rey=1000$. See Figure~\ref{fig:shock_diffraction} for a snapshots of the density field at time $T=2.3$. We only employ the positivity-preserving limiter. No special treatment is taken at the corner.
\begin{figure}[ht!]
\begin{center}
\begin{tabularx}{\linewidth}{@{}c@{~~}c@{~}c@{~}c@{~}c@{}}
\begin{sideways}{\footnotesize $\hspace{1.5cm} \Rey=200 \quad$}\end{sideways} &
\includegraphics[width=0.29\textwidth]{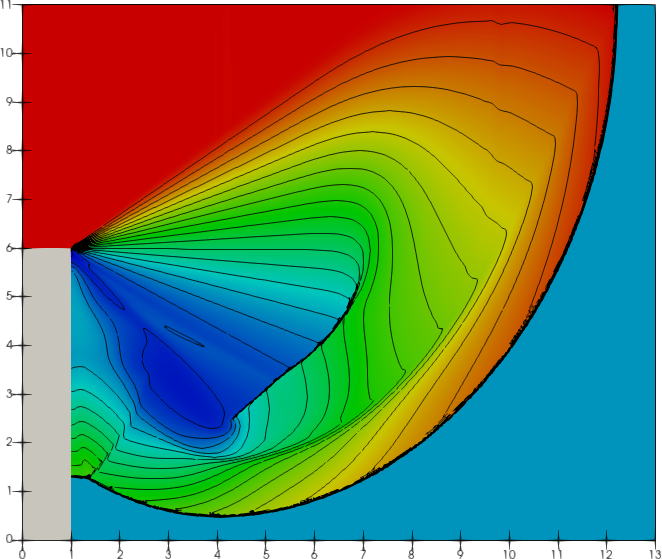} & 
\includegraphics[width=0.29\textwidth]{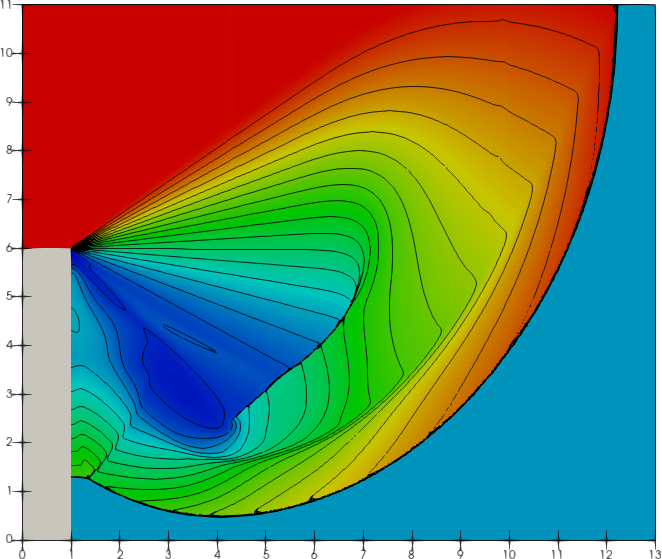} & \includegraphics[width=0.29\textwidth]{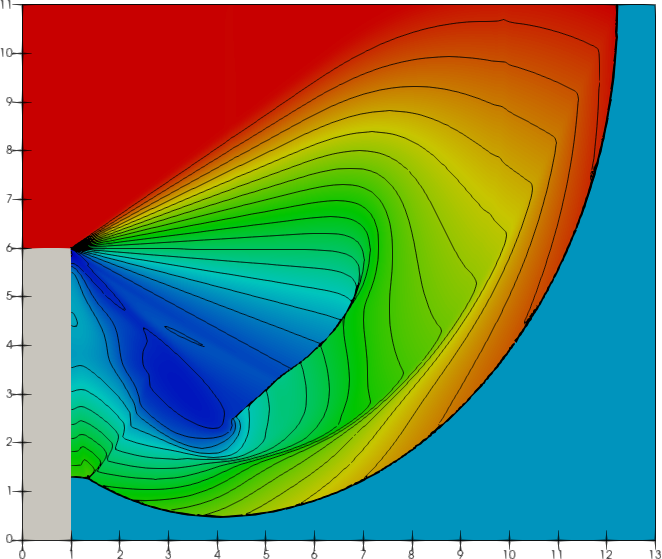} &
\includegraphics[width=0.0775\textwidth]{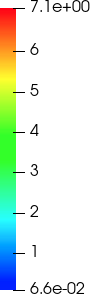} \\
\begin{sideways}{\footnotesize $\hspace{1.5cm} \Rey=1000 \quad$}\end{sideways} &
\includegraphics[width=0.29\textwidth]{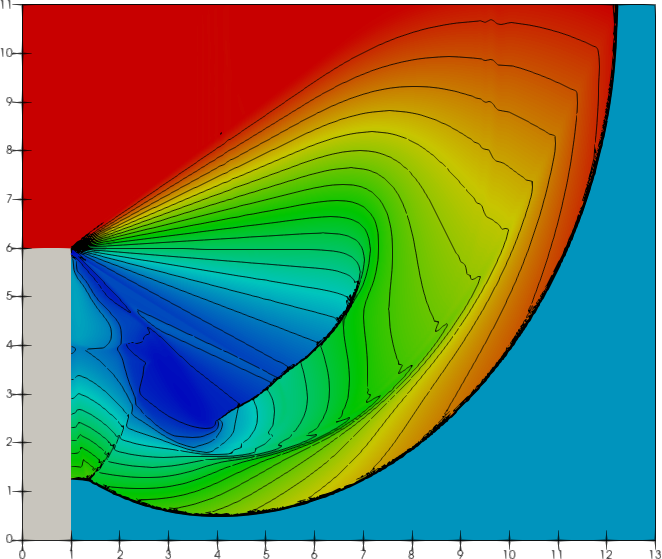} &
\includegraphics[width=0.29\textwidth]{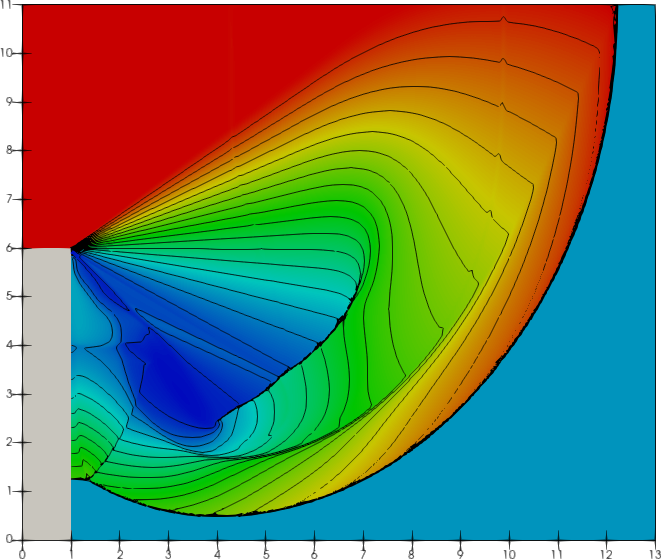} &
\includegraphics[width=0.29\textwidth]{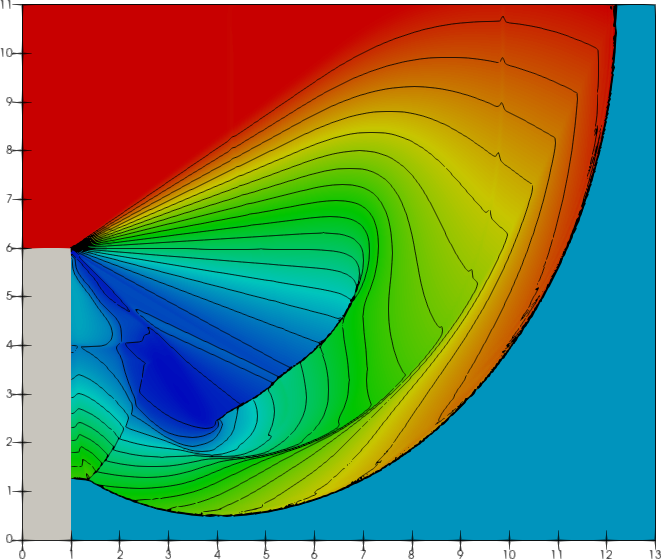} &
\includegraphics[width=0.0775\textwidth]{{Figures/shock_diffraction/color_bar}.png} \\
\end{tabularx}
\caption{Shock diffraction: the $\IQ^1$, $\IQ^2$, and $\IQ^3$ schemes with only the positivity-preserving limiter on a uniform mesh with resolution $\Delta x = 1/96$ for $\IQ^1$ scheme and $\Delta x = 1/64$ for $\IQ^2$ and $\IQ^3$ schemes. The snapshots of density profile are taken at $T=2.3$. Plot of density: $20$ equally space contour lines from $0.066227$ to $7.0668$.}
\label{fig:shock_diffraction}
\end{center}
\end{figure}

%%%%%%%%%%%%%%%%%%%%%%%%%%%%%%%%%%%%%%%%%%%%%%%%%%%%%%%%%%%%%%%%%%%%%%%%%%%%%%%%%%%%%%%%%%%%%%%%%%%%%%%%%%%%%%%%%%%%%%%
\subsection{Double Mach reflection of a Mach $10$ shock}
%%%%%%%%%%%%%%%%%%%%%%%%%%%%%%%%%%%%%%%%%%%%%%%%%%%%%%%%%%%%%%%%%%%%%%%%%%%%%%%%%%%%%%%%%%%%%%%%%%%%%%%%%%%%%%%%%%%%%%%
The double Mach reflection of a Mach $10$ shock is a widely used benchmark test problem \cite{woodward1984numerical}. This experiment studies a planar shock flow in a tube, which contains an oblique wall of thirty degree. In the beginning, the planar shock is perpendicular to the tube surface and move to right. Later, when the shock meets the oblique wall a complicated shock reflection occurs.
Following the numerical setup in \cite{cockburn1998runge}, we tilt the incident shock rather than the solid surface and select the computational domain $\Omega = [0,4]\times[0,1]$. We set the simulation end time $T = 0.2$.
\par
A Mach $10$ shock initially is positioned at point $(\frac{1}{6},0)$ and makes a sixty degree angle with $x$-axis. The line $6x-2\sqrt{3}y-1=0$ denotes the shock location and separates domain $\Omega$ into left and right zones.
For initials, the density equals to $8$, the velocity equals to $\transpose{[4.125\sqrt{3},-4.125]}$, and the pressure equals to $116.5$ in the post-shock region (left zone). 
And the undisturbed air ahead of the shock (right zone) has a density of $1.4$ and a pressure of $1$.
For the hyperbolic subproblem, the left boundary of $\Omega$ is inflow, the right boundary of $\Omega$ is outflow, part of the bottom boundary of $\Omega$ on $\{y=0, \frac{1}{6}\leq x\leq 4\}$ are reflective, and the post-shock condition is imposed at $\{y=0, 0\leq x< \frac{1}{6}\}$. On the boundary with post-shock condition, the density, velocity, and pressure are fixed in time with the initial values to make the reflected shock stick to the bottom wall. 
The flow values on top boundary are set to describe the exact motion of the Mach $10$ shock.
%For the parabolic subproblem, associated with the inflow, post-shock, and following shock moving boundaries in subproblem ($\mathrm{H}$), we supplement Dirichlet boundary conditions. For the rest boundaries, we apply Neumann-type boundary conditions.
\par
We uniformly partition $\Omega$ by square cells with the mesh resolution $\Delta x = 1/480$ for $\IQ^1$ scheme and $\Delta x = 1/240$ for $\IQ^2$ and $\IQ^3$ schemes. 
We take parameter $a=0.5$ in \eqref{eq:hyperbolic_CFL} for $\IQ^1$ scheme and $a=1$ in \eqref{eq:hyperbolic_CFL} for $\IQ^2$ and $\IQ^3$ schemes for adaptive time step size.
We compare two groups of simulations with Reynolds number $\Rey=100$ and $\Rey=1000$. The Figure~\ref{fig:shock_reflection_Re100} and Figure~\ref{fig:shock_reflection_Re1000} provide snapshots of the density fields at time $T=0.2$.
For high Reynolds number simulations, it is clear that the rollup is better-captured by the $\IQ^3$ scheme   than the $\IQ^1$ scheme, see Figure~\ref{fig:shock_reflection_Re1000}.
\begin{figure}[ht!]
\begin{center}
\begin{tabularx}{\linewidth}{@{}c@{~}c@{~}c@{}}
\includegraphics[width=0.55\textwidth]{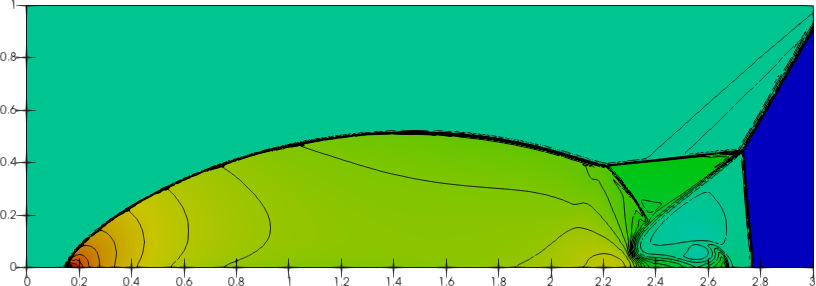} &
\includegraphics[width=0.375\textwidth]{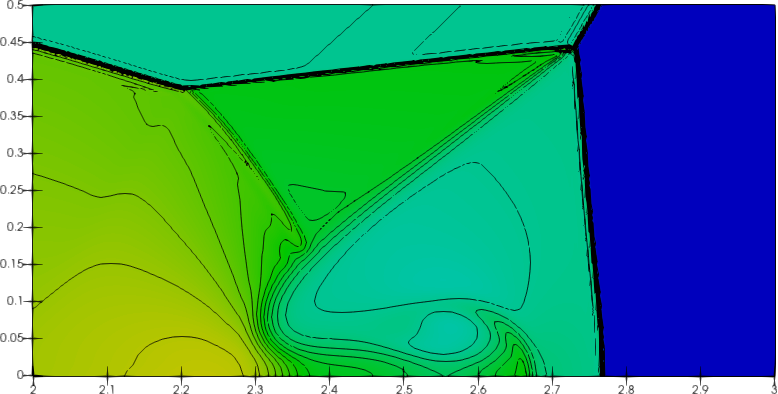} &
\includegraphics[width=0.061\textwidth]{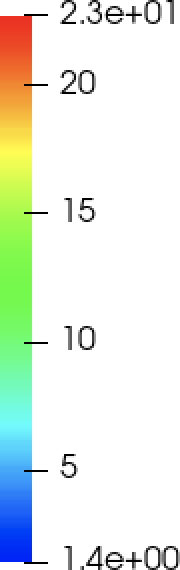} \\
\includegraphics[width=0.55\textwidth]{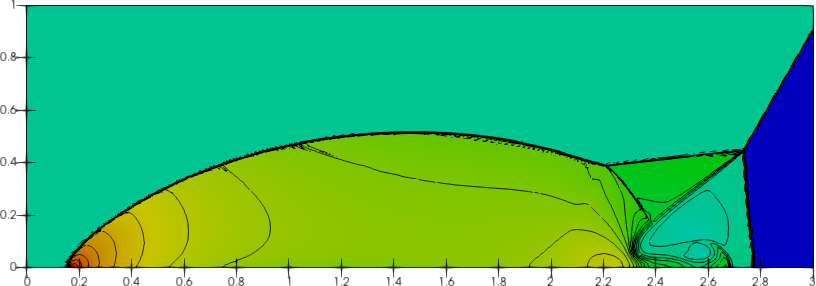} &
\includegraphics[width=0.375\textwidth]{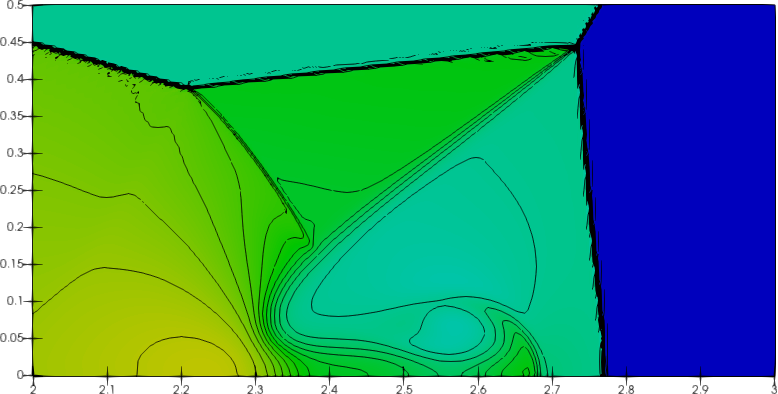} &
\includegraphics[width=0.061\textwidth]{{Figures/shock_reflection/color_bar}.png} \\
\includegraphics[width=0.55\textwidth]{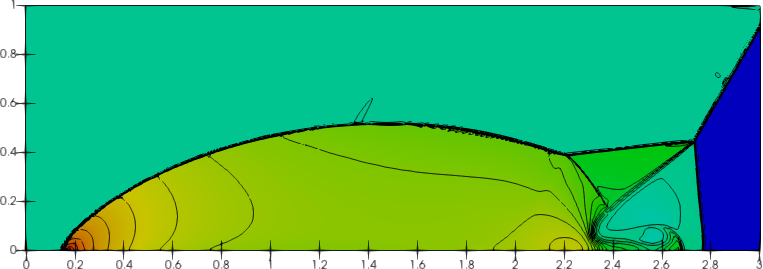} &
\includegraphics[width=0.375\textwidth]{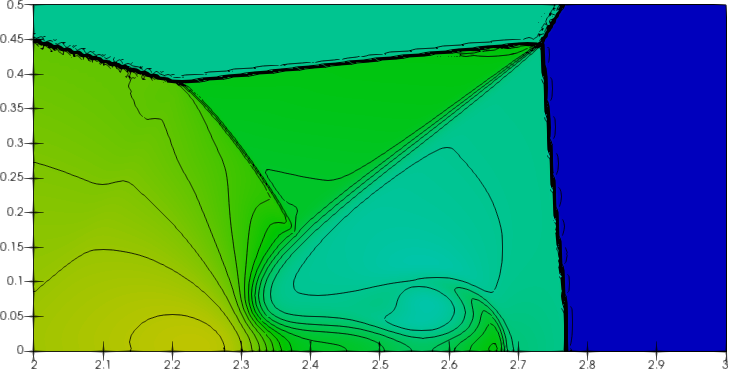} &
\includegraphics[width=0.061\textwidth]{{Figures/shock_reflection/color_bar}.png} \\
\end{tabularx}
\caption{Shock reflection. From top to bottom: simulation results of $\IQ^1$, $\IQ^2$, and $\IQ^3$ schemes for $\Rey=100$ with only the positivity-preserving limiter. The snapshots of density profile are taken at $T=0.2$. Plot of density: 30 equally space contour lines from $1.3965$ to $22.682$.}
\label{fig:shock_reflection_Re100}
\end{center}
\end{figure}
\begin{figure}[ht!]
\begin{center}
\begin{tabularx}{\linewidth}{@{}c@{~}c@{~}c@{}}
\includegraphics[width=0.55\textwidth]{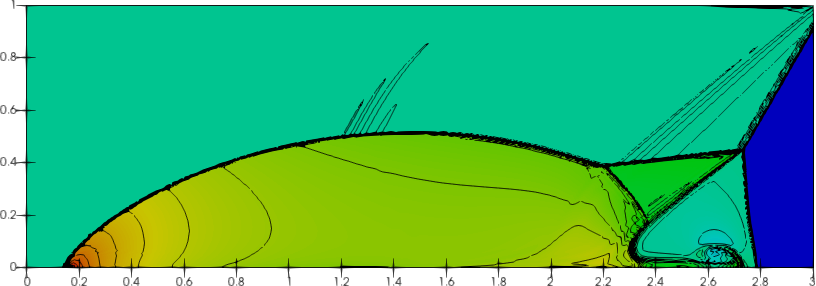} &
\includegraphics[width=0.375\textwidth]{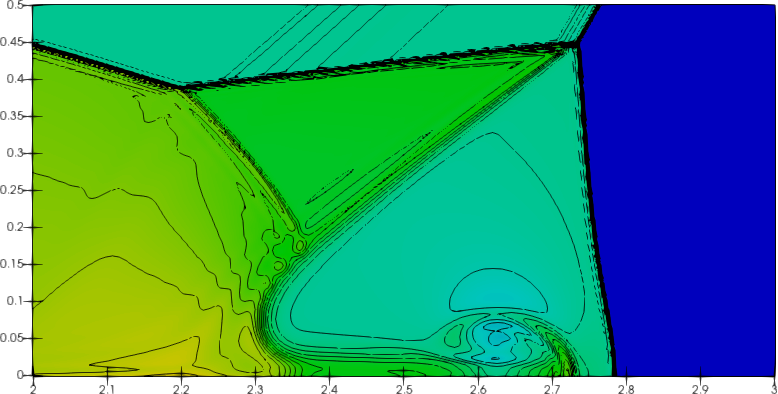} &
\includegraphics[width=0.061\textwidth]{{Figures/shock_reflection/color_bar}.png} \\
\includegraphics[width=0.55\textwidth]{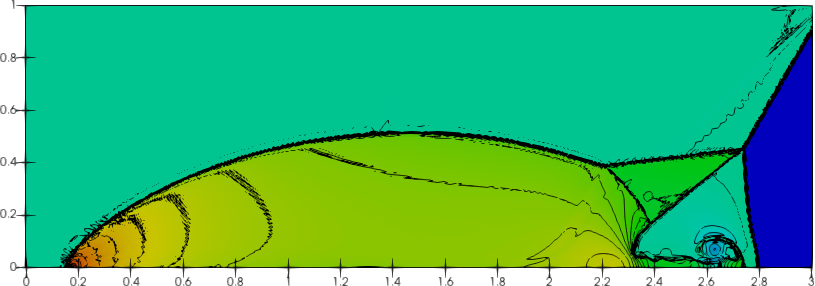} &
\includegraphics[width=0.375\textwidth]{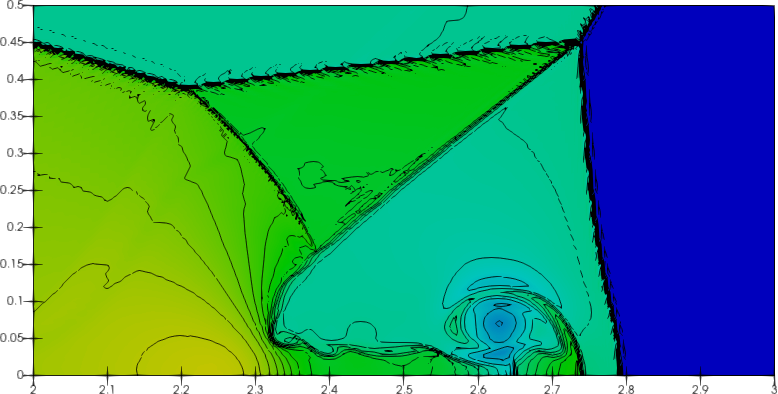} &
\includegraphics[width=0.061\textwidth]{{Figures/shock_reflection/color_bar}.png} \\
\includegraphics[width=0.55\textwidth]{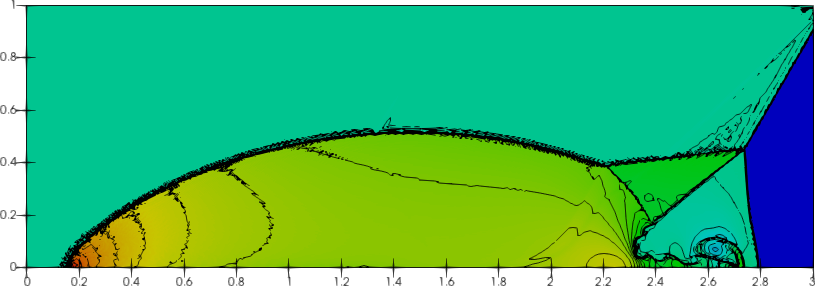} &
\includegraphics[width=0.375\textwidth]{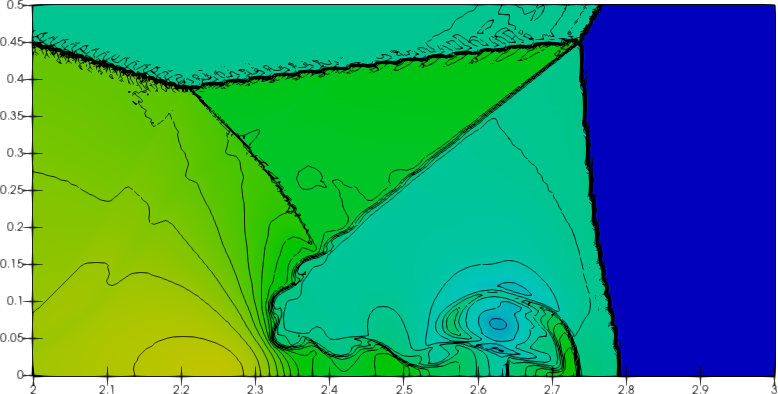} &
\includegraphics[width=0.061\textwidth]{{Figures/shock_reflection/color_bar}.png} \\
\end{tabularx}
\caption{Shock reflection. From top to bottom: simulation results of $\IQ^1$, $\IQ^2$, and $\IQ^3$ schemes for $\Rey=1000$ with only the positivity-preserving limiter. The snapshots of density profile are taken at $T=0.2$. Plot of density: 30 equally space contour lines from $1.3965$ to $22.682$.}
\label{fig:shock_reflection_Re1000}
\end{center}
\end{figure}

%%%%%%%%%%%%%%%%%%%%%%%%%%%%%%%%%%%%%%%%%%%%%%%%%%%%%%%%%%%%%%%%%%%%%%%%%%%%%%%%%%%%%%%%%%%%%%%%%%%%%%%%%%%%%%%%%%%%%%%
\subsection{Mach $10$ shock reflection and diffraction}\label{sec:experiments:ref_def}
%%%%%%%%%%%%%%%%%%%%%%%%%%%%%%%%%%%%%%%%%%%%%%%%%%%%%%%%%%%%%%%%%%%%%%%%%%%%%%%%%%%%%%%%%%%%%%%%%%%%%%%%%%%%%%%%%%%%%%%
This is the same test as in \cite{fan2022positivity}.
Let the computational domain $\Omega$ be the union of $[1,4]\times[-1,0]$ and $[0,4]\times[0,1]$. We select the simulation end time $T = 0.2$.
A Mach $10$ shock initially is positioned at point $(\frac{1}{6},0)$ and makes a sixty degree angle with $x$-axis. The line $6x-2\sqrt{3}y-1=0$ denotes the initial shock location and separates domain $\Omega$ into left zone and right zone.
For initials, the density equals to $8$, the velocity equals to $\transpose{[4.125\sqrt{3},-4.125]}$, and the pressure equals to $116.5$ in the post-shock region (left zone). 
And the undisturbed air ahead of the shock (right zone) has a density of $1.4$ and a pressure of $1$.
\par
For the hyperbolic subproblem, the left boundary of $\Omega$ is inflow, the right and bottom boundaries of $\Omega$ are outflow, part of the fluid--solid boundaries of $\Omega$ on $\{y=0, \frac{1}{6}\leq x\leq 1\}$ and $\{x=1, -1\leq y\leq 1\}$ are reflective, 
and the post-shock condition is imposed at $\{y=0, 0\leq x< \frac{1}{6}\}$. On the boundary with post-shock condition, the density, velocity, and pressure are fixed in time with the initial values to make the reflected shock stick to the solid wall. The flow values on top boundary are set to describe the exact motion of the Mach $10$ shock.
%For the parabolic subproblem, associated with the inflow, post-shock, and following shock moving boundaries in subproblem ($\mathrm{H}$), we supplement Dirichlet boundary conditions. For the rest boundaries, we apply Neumann-type boundary conditions.
\par
We take the parameter $a=0.5$ in \eqref{eq:hyperbolic_CFL} for $\IQ^1$ scheme and $a=1$ in \eqref{eq:hyperbolic_CFL} for $\IQ^2$ and $\IQ^3$ schemes for adaptive time step size. 
Consider three groups of numerical experiments. 
In the first group of tests, we choose $\IQ^1$ scheme and uniformly partition $\Omega$ by square cells with the mesh resolution $\Delta x = 1/480$. We various the Reynolds number in three different levels: $100$, $500$, and $1000$. From Figure~\ref{fig:shock_reflection_diffraction_1}, we see as the Reynolds number increases the rollup becomes stronger.
In the second group of tests, we fix the Reynolds number $\Rey=1000$ and compare the $\IQ^1$, $\IQ^2$, and $\IQ^3$ schemes with mesh resolution $\Delta x = 1/480$, $1/240$, and $1/120$. 
From Figure~\ref{fig:shock_reflection_diffraction_2}, we see even though the degrees of freedom for $\IQ^3$ simulation are significantly less than the $\IQ^1$ simulation, the rollup is well-captured in the $\IQ^3$ case.
% Q1 DOF (u - 12902400); Q2 DOF (u - 7257600); Q3 DOF (u - 3225600).
In the third group of tests, we take $\IQ^3$ scheme and compare simulation results under different mesh resolutions $\Delta x = 1/120$, $1/180$, $1/240$. From Figure~\ref{fig:shock_reflection_diffraction_3}, we see as mesh refinement, our scheme produces satisfactory non-oscillatory solutions when the physical diffusion is accurately resolved, which is consistent with the observations for fully explicit high order accurate schemes in \cite{zhang2017positivity}.
\begin{figure}[ht!]
\begin{center}
\begin{tabularx}{\linewidth}{@{}c@{~}c@{~}c@{~}c@{}}
\includegraphics[width=0.31\textwidth]{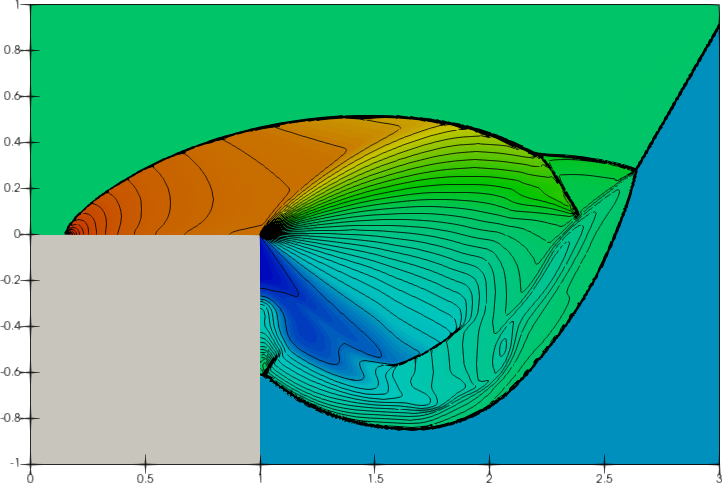} &
\includegraphics[width=0.31\textwidth]{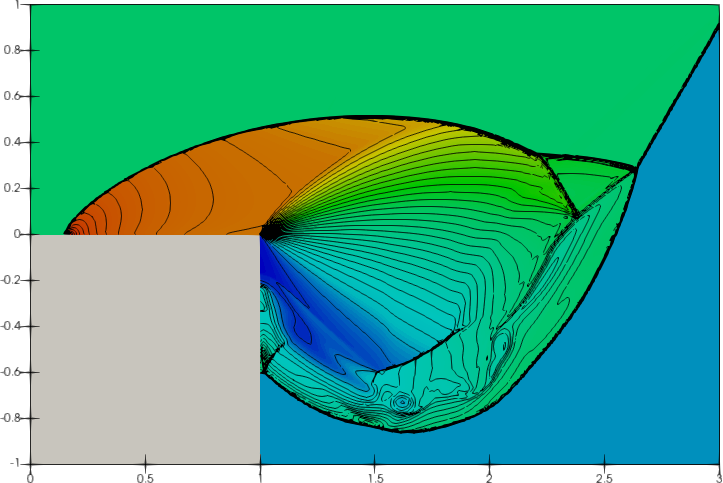} &
\includegraphics[width=0.31\textwidth]{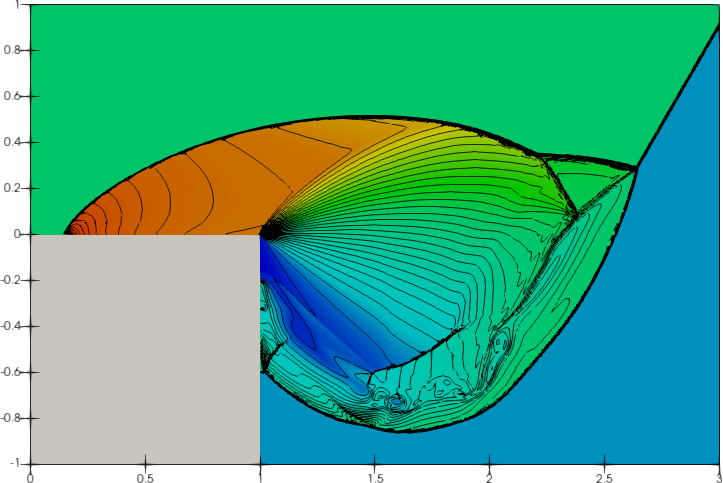} &
\includegraphics[width=0.041\textwidth]{{Figures/shock_reflection_diffraction/color_bar}.png} \\
\end{tabularx}
\caption{Mach $10$ shock reflection and diffraction. The snapshots of density profile are taken at $T=0.2$. Plot of density: 50 equally space contour lines from $0$ to $25$. From left to right: simulation results of Reynolds number $\Rey=100$, $500$, and $1000$ with mesh resolution $\Delta x = 1/480$.}
\label{fig:shock_reflection_diffraction_1}
\end{center}
\end{figure}
\begin{figure}[ht!]
\begin{center}
\begin{tabularx}{\linewidth}{@{}c@{~}c@{~}c@{~}c@{}}
\includegraphics[width=0.31\textwidth]{{Figures/shock_reflection_diffraction/Q1_Re1000_DG_h480_density_contour}.png} &
\includegraphics[width=0.31\textwidth]{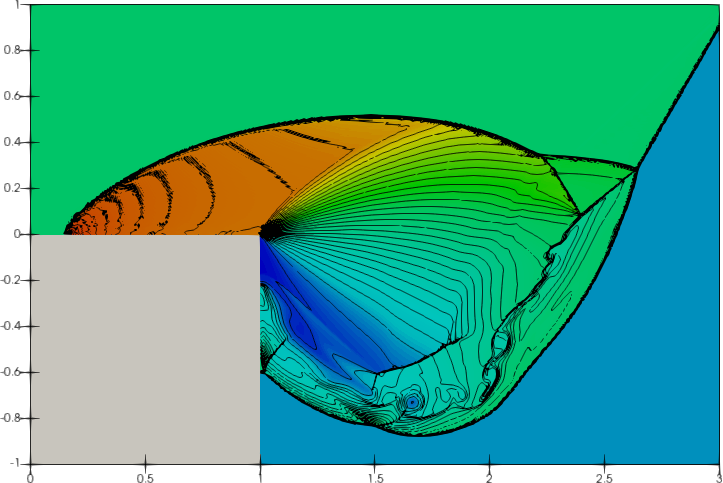} &
\includegraphics[width=0.31\textwidth]{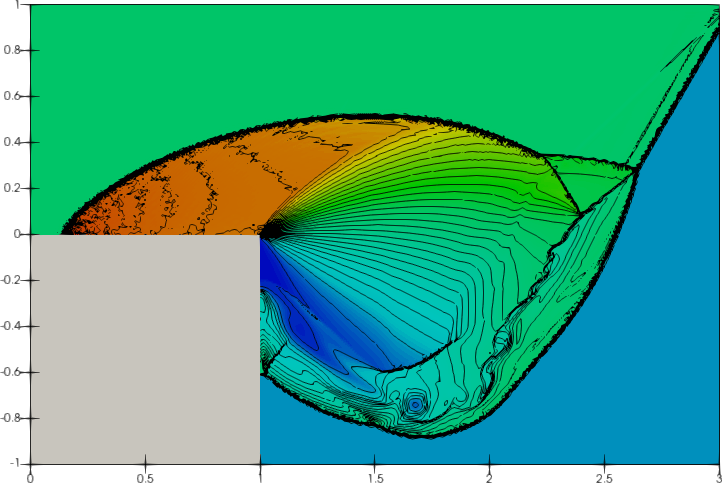} &
\includegraphics[width=0.041\textwidth]{{Figures/shock_reflection_diffraction/color_bar}.png} \\
\end{tabularx}
\caption{Mach $10$ shock reflection and diffraction. The snapshots of density profile are taken at $T=0.2$. Plot of density: 50 equally space contour lines from $0$ to $25$. From left to right: simulation results of $\IQ^1$, $\IQ^2$, and $\IQ^3$ schemes with mesh resolution $\Delta x = 1/480$, $1/240$, and $1/120$.}
\label{fig:shock_reflection_diffraction_2}
\end{center}
\end{figure}
\begin{figure}[ht!]
\begin{center}
\begin{tabularx}{\linewidth}{@{}c@{~}c@{~}c@{}}
\includegraphics[width=0.31\textwidth]{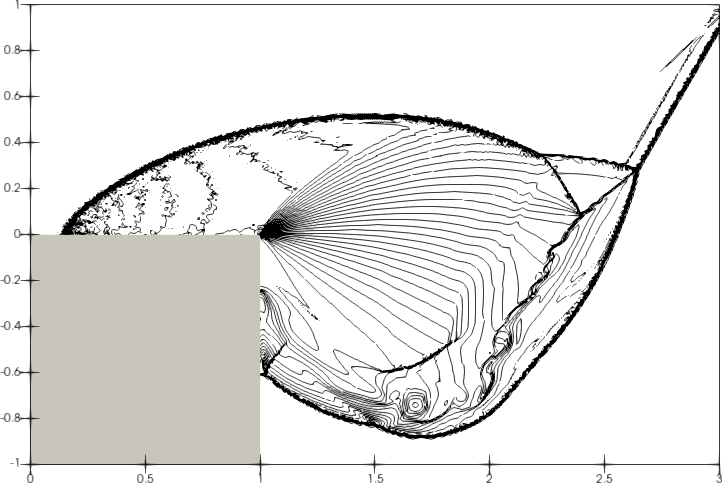} &
\includegraphics[width=0.31\textwidth]{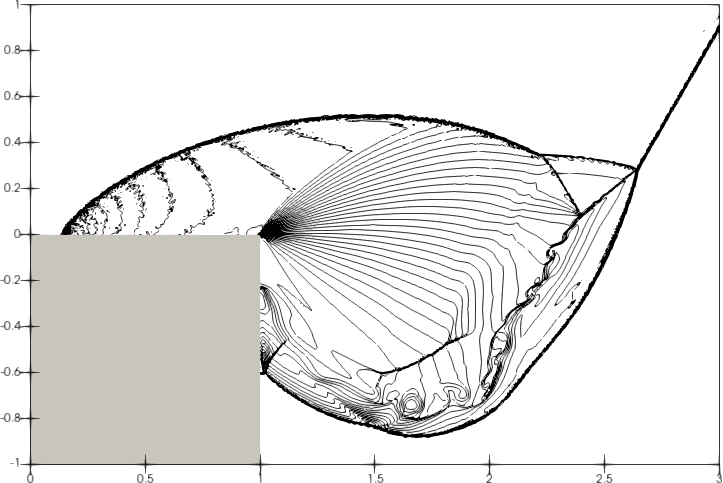} &
\includegraphics[width=0.31\textwidth]{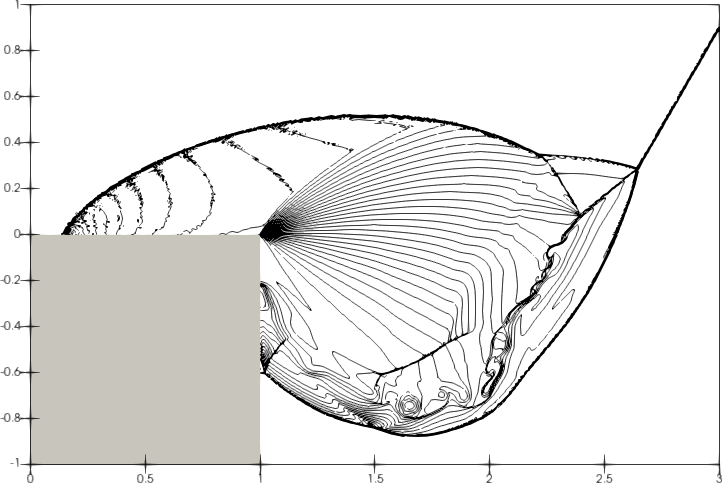} \\
\end{tabularx}
\caption{Mach $10$ shock reflection and diffraction. The snapshots of density profile are taken at $T=0.2$. Plot of density: 50 equally space contour lines from $0$ to $25$. Only contour lines are plotted. From left to right: simulation results of $\IQ^3$ scheme with mesh resolution $\Delta x = 1/120$, $1/180$, and $1/240$.}
\label{fig:shock_reflection_diffraction_3}
\end{center}
\end{figure}

\section{Concluding remarks}\label{sec-remark}
\par In this paper, we have constructed  an implicit-explicit scheme with high order polynomial basis for solving the compressible NS equations. Our scheme preserves the local conservation of density,  global conservation of momentum and total energy, and positivity of density and internal energy, under a CFL constraint like $\Delta t = \mathcal{O}(\Delta x)$. Even though the time accuracy is at most first order, 
numerical tests suggest that the $\IQ^2$ scheme and $\IQ^3$ scheme are not only robust but also producing better numerical solutions than the low order $\IQ^1$ scheme. 
Numerical experiments also indicate that our $\IQ^3$ scheme with only positivity-preserving limiter produces satisfactory non-oscillatory solutions when physical diffusion is accurately resolved.

%% Acknowledgments.
\section*{Acknowledgments}
Research is supported by NSF DMS-2208518.

%% Appendix
\appendix
%%%%%%%%%%%%%%%%%%%%%%%%%%%%%%%%%%%%%%%%%%%%%%%%%%%%%%%%%%%%%%%%%%%%%%%%%%%%%%%%%%%%%%%%%%%%%%%%%%%%%%%%%%%%%%%%%%%%%%%
\section{The M-matrix structure of the $\IQ^1$ DG scheme for parabolic subproblem}\label{sec:positivity:parabolic} 
%%%%%%%%%%%%%%%%%%%%%%%%%%%%%%%%%%%%%%%%%%%%%%%%%%%%%%%%%%%%%%%%%%%%%%%%%%%%%%%%%%%%%%%%%%%%%%%%%%%%%%%%%%%%%%%%%%%%%%%
 
The non-singular M-matrix is an inverse-positive matrix, which serves as a convenient tool for proving the positivity of internal energy. There are many equivalent definitions or characterizations of M-matrix. A comprehensive review of M-matrix can be found in \cite{plemmons1977m}. Here, we state a sufficient but not necessary condition to verify the nonsingular M-matrix.
\begin{lemma}\label{thm:M_matrix2}
For a real square matrix $\vecc{A}$ with positive diagonal entries and nonpositive off-diagonal entries, it is a nonsingular M-matrix if all the row sums of $\vecc{A}$ are nonnegative and at least one row sum is positive.
\end{lemma}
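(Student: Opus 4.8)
The plan is to realize $\vecc{A}$ in the canonical M-matrix splitting $\vecc{A} = s\vecc{I} - \vecc{B}$ with $\vecc{B}\ge 0$, and then to upgrade the weak spectral bound $\rho(\vecc{B})\le s$ (which alone only yields an M-matrix) to the strict bound $\rho(\vecc{B}) < s$, which is precisely the property of being a \emph{nonsingular} M-matrix in the sense recalled in Section~\ref{sec:pos_P_high_order}. First I would set $s := \max_i A_{ii}$, which is positive because the diagonal entries are positive, and $\vecc{B} := s\vecc{I} - \vecc{A}$. Then $B_{ii} = s - A_{ii}\ge 0$ by the choice of $s$, and $B_{ij} = -A_{ij}\ge 0$ for $i\ne j$ because the off-diagonal entries of $\vecc{A}$ are nonpositive; hence $\vecc{B}\ge 0$.

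Next I would bound $\rho(\vecc{B})$ by the row sums. For each row index $i$, the $i$-th row sum of $\vecc{B}$ is $(\vecc{B}\vec{1})_i = s - (\vecc{A}\vec{1})_i \le s$, using nonnegativity of the row sums of $\vecc{A}$. Since $\vecc{B}\ge 0$, this means $\|\vecc{B}\|_\infty = \max_i (\vecc{B}\vec{1})_i \le s$, so $\rho(\vecc{B}) \le \|\vecc{B}\|_\infty \le s$; already at this point $\vecc{A}$ is an M-matrix. Moreover, in the row where $\vecc{A}$ has a positive row sum the corresponding row sum of $\vecc{B}$ is \emph{strictly} below $s$.

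To conclude nonsingularity I would invoke the Perron--Frobenius refinement for nonnegative matrices: if $\vecc{B}\ge 0$ is irreducible, then $\rho(\vecc{B})$ lies strictly between the minimal and the maximal row sum of $\vecc{B}$ unless all row sums of $\vecc{B}$ coincide. Here one row sum of $\vecc{B}$ is below $s$ while all are at most $s$, so they are not all equal and therefore $\rho(\vecc{B}) < s$; combined with $\vecc{B}\ge 0$ this is exactly a nonsingular M-matrix, and inverse-positivity follows from \cite{plemmons1977m}. A more self-contained alternative is a discrete maximum principle: from $\vecc{A}\vec{x}=\vec{0}$ one writes $x_i = \sum_{j\ne i}\frac{|A_{ij}|}{A_{ii}}x_j$ with nonnegative weights summing to at most one, so an index attaining the extreme value of $x$ forces $x_j = x_i$ for every $j$ with $A_{ij}\ne 0$; propagating this equality through the connectivity graph of $\vecc{A}$ reaches a row with strictly positive row sum, forcing $\vec{x} = \vec{0}$.

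The step I expect to be the real obstacle is exactly this last one: the weak bound $\rho(\vecc{B})\le s$, hence the M-matrix property, is immediate, but strict positivity of $\det\vecc{A}$ genuinely requires the irreducibility (graph-connectedness) of $\vecc{A}$ — a block-diagonal $\vecc{A}$ with one singular discrete-Laplacian block satisfies every stated hypothesis yet is singular — so the lemma, and its application to the $\IQ^1$ IIPG stiffness matrix, relies on $\vecc{A}$ being irreducible. For the assembled matrix $\vecc{A}_{\mathcal{M}} + \frac{\Delta t\lambda}{\Rey}\vecc{A}_{\mathcal{D}}$ arising in \eqref{eq:pos:paraboloc_mat_form} this is automatic, and in fact every row sum is strictly positive there, so strict diagonal dominance already gives nonsingularity directly via Gershgorin's theorem.
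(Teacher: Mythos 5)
The paper does not actually prove Lemma~\ref{thm:M_matrix2}; it is quoted as a known sufficient condition with only a pointer to the survey \cite{plemmons1977m}, so your argument can only be measured against the literature. Your splitting $s=\max_i A_{ii}>0$, $\vecc{B}=s\vecc{I}-\vecc{A}\ge 0$, and the row-sum bound $\rho(\vecc{B})\le\norm{\vecc{B}}{\infty}\le s$ are exactly the standard route and correctly deliver the (possibly singular) M-matrix property. The genuinely valuable part of your write-up is the observation you flag as the real obstacle: the lemma as literally stated is \emph{false}, and no argument can close the gap from the stated hypotheses alone. Your block-diagonal counterexample is valid; concretely,
\[
\begin{pmatrix} 1 & -1 & 0\\ -1 & 1 & 0\\ 0 & 0 & 1\end{pmatrix}
\]
has positive diagonal, nonpositive off-diagonal entries, nonnegative row sums with one row sum positive, yet is singular. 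The statement must be supplemented either by irreducibility of $\vecc{A}$ (after which your Perron--Frobenius row-sum refinement, or the discrete maximum principle, gives $\rho(\vecc{B})<s$), or by the weaker ``weakly chained'' condition that every row with zero row sum is connected in the adjacency graph of $\vecc{A}$ to a row with positive row sum, or most simply by requiring \emph{all} row sums to be strictly positive, in which case strict diagonal dominance plus the sign pattern gives a nonsingular M-matrix directly (Levy--Desplanques/Gershgorin).

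As you note, the only use of the lemma in \ref{sec:positivity:parabolic} is for the matrix $\vecc{A}_{\mathcal{M}}+\frac{\Delta t\lambda}{\Rey}\vecc{A}_{\mathcal{D}}$ of \eqref{eq:pos:paraboloc_mat_form}, where $\vecc{A}_{\mathcal{D}}$ has zero row sums and $\vecc{A}_{\mathcal{M}}$ is diagonal with strictly positive entries, so every row sum of the assembled matrix is strictly positive and the simplest repair applies; the conclusions of the appendix are unaffected, only the hypothesis of the lemma needs strengthening. One small caution on your maximum-principle variant: the identity $x_i=\sum_{j\ne i}\frac{|A_{ij}|}{A_{ii}}x_j$ with nonnegative subunit weights forces propagation of equality only after fixing the sign of the extremum (argue with $\max_j x_j$ when it is positive and with $-\vec{x}$ otherwise, and note that the weights summing to one at an interior maximum is itself a consequence of the zero row sum there); this is routine but should be written out if the lemma is to be proved rather than cited.
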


\paragraph{\bf One-dimensional case}
Assume the computational domain $\Omega=[-L,L]$, where $L>0$, is uniformly partitioned into $\Nel$ intervals (cells) with spacing $\Delta{x}$. Let $-L = x_0 < x_1 < \cdots < x_{\Nel} = L$ denote the grid points. 
On cell $K_i = [x_{i}, x_{i+1}]$, where $i = 0, \cdots, \Nel-1$, the piecewise linear bases are defined as follows:
$\varphi_{i0}(x) = \frac{1}{\Delta{x}}(x_{i+1}-x)$ and 
$\varphi_{i1}(x) = \frac{1}{\Delta{x}}(x-x_{i})$. And if $x\notin K_i$, the $\varphi_{i0}$ and $\varphi_{i1}$ equal to $0$. 
\par
In one dimension, the matrix from IIPG discretization of the Laplace operator evaluated by 2-point Gauss--Lobatto quadrature enjoys an M-matrix structure. This result is well-known in literature, for instance, see \cite{horvath2013discrete}. 
Let us present the matrix $\vecc{A}_\mathcal{D}$ explicitly.
For simplicity, we only show $\vecc{A}_\mathcal{D}$ with respect to pure Neumann boundary condition. Enforcing part or entire Dirichlet boundary does not break the M-matrix structure. 
\begin{align*}
\vecc{A}_\mathcal{D} = 
\begin{pmatrix}
\red{\frac{1}{\Delta{x}}} & -\frac{1}{\Delta{x}} & 0 & 0 &  &  &  &  &  & \\ 
-\frac{1}{2\Delta{x}} & \red{\frac{1+2\tilde{\sigma}}{2\Delta{x}}} & \frac{1-2\tilde{\sigma}}{2\Delta{x}} & -\frac{1}{2\Delta{x}} &  &  &  &  &  & \\ 
 & \ddots & \ddots & \ddots & \ddots &  &  &  &  & \\ 
 &  & -\frac{1}{2\Delta{x}} & \frac{1-2\tilde{\sigma}}{2\Delta{x}} & \red{\frac{1+2\tilde{\sigma}}{2\Delta{x}}} & -\frac{1}{2\Delta{x}} &  &  &  & \\ 
 &  &  &  & -\frac{1}{2\Delta{x}} & \red{\frac{1+2\tilde{\sigma}}{2\Delta{x}}} & \frac{1-2\tilde{\sigma}}{2\Delta{x}} & -\frac{1}{2\Delta{x}} &  & \\ 
 &  &  &  &  & \ddots & \ddots & \ddots & \ddots & \\ 
 &  &  &  &  &  & -\frac{1}{2\Delta{x}} & \frac{1-2\tilde{\sigma}}{2\Delta{x}} & \red{\frac{1+2\tilde{\sigma}}{2\Delta{x}}} & -\frac{1}{2\Delta{x}} \\ 
 &  &  &  &  &  & 0 & 0 & -\frac{1}{\Delta{x}} & \red{\frac{1}{\Delta{x}}}
\end{pmatrix}.
\end{align*}
In above, we mark all diagonal entries in red color.
Obviously, when the penalty parameter $\tilde{\sigma} > 1/2$, the diagonal entries of $\vecc{A}_\mathcal{D}$ are positive. All the off-diagonal entries of $\vecc{A}_\mathcal{D}$ are non-positive. The row sum of $\vecc{A}_\mathcal{D}$ equals zero. %This is easy to understand since the Laplace operator apply on constant is $0$. 
In addition, since the Lagrange bases are numerically orthogonal with respect to the Gauss--Lobatto quadrature, the mass matrix is diagonal with positive diagonal entries $[\vecc{A}_\mathcal{M}]_{ij;ij} = \Delta{x} \hat{\omega}_j \rho_{ij}^\mathrm{P}$. Thus the row sum of matrix $\vecc{A}_{\mathcal{M}} + \frac{\Delta t \lambda}{\Rey}\vecc{A}_{\mathcal{D}}$ is positive.
Above all, by Lemma~\ref{thm:M_matrix2}, the system matrix $\vecc{A}_{\mathcal{M}} + \frac{\Delta t \lambda}{\Rey}\vecc{A}_{\mathcal{D}}$ is a non-singular M-matrix, therefore is monotone.

\paragraph{\bf Two-dimensional case} 
In this part, we show the matrix corresponds to the IIPG discretization of $-\laplace{e}$ with $2^2$-point Gauss--Lobatto quadrature enjoys the M-matrix structure.
To the best knowledge of the authors, this is the first time that an M-matrix structure is reported with respect to IPDG method for the Laplace operator in two dimension. 
\par
Consider the computational domain $\Omega$ is uniformly partitioned into $\Nel$ square cells with side length $\Delta{x}$.
The $\IQ^1$ Lagrange bases on reference element $\hat{K} = [-\frac{1}{2},\frac{1}{2}]^2$ are defined as follows: for $\hat{\vec{x}} = \transpose{[\hat{x}, \hat{y}]} \in \hat{K}$,
\begin{align*}
\hat{\varphi}_0(\hat{\vec{x}}) &= (\frac{1}{2} - \hat{x})(\frac{1}{2} - \hat{y}), &
\hat{\varphi}_1(\hat{\vec{x}}) &= (\frac{1}{2} + \hat{x})(\frac{1}{2} - \hat{y}), \\
\hat{\varphi}_2(\hat{\vec{x}}) &= (\frac{1}{2} - \hat{x})(\frac{1}{2} + \hat{y}), &
\hat{\varphi}_3(\hat{\vec{x}}) &= (\frac{1}{2} + \hat{x})(\frac{1}{2} + \hat{y}).
\end{align*}
Denote the lower left corner of a cell $K_i\in\setE_h$ by $\vec{a}_{i0}$. 
The mapping $\vec{F}_i:~ \hat{K}\rightarrow K_i$ and its inverse $\vec{F}_i^{-1}:~ K_i\rightarrow \hat{K}$ are defined by
\begin{align*}
\vec{F}_i(\hat{\vec{x}}) = \Delta{x}\Big(\hat{\vec{x}} + \frac{1}{2}
\begin{bmatrix}
1 \\ 
1
\end{bmatrix}\Big) + \vec{a}_{i0}
\quad\text{and}\quad
\vec{F}_i^{-1}(\vec{x}) = \frac{1}{\Delta{x}}(\vec{x} - \vec{a}_{i0}) - \frac{1}{2}
\begin{bmatrix}
1 \\ 
1
\end{bmatrix}.
\end{align*}
Then, the bases on cell $K_i$ are $\varphi_{ij} = \hat{\varphi}_j\circ\vec{F}_i^{-1}$, where $j = 0, \cdots, 3$. 
Let $\hat{\grad} = \transpose{[\hat{\partial}_{\hat{x}}, \hat{\partial}_{\hat{y}}]}$ denote the gradient on $\hat{K}$. We list the gradient of the basis functions on the reference element, as follows:
\begin{align*}
\hat{\grad}\hat{\varphi}_0 = \frac{1}{2}
\begin{bmatrix}
-1+2\hat{y} \\ 
-1+2\hat{x}
\end{bmatrix},&&
\hat{\grad}\hat{\varphi}_1 = \frac{1}{2}
\begin{bmatrix}
1-2\hat{y} \\ 
-1-2\hat{x}
\end{bmatrix},&&
\hat{\grad}\hat{\varphi}_2 = \frac{1}{2}
\begin{bmatrix}
-1-2\hat{y} \\ 
1-2\hat{x}
\end{bmatrix},&&
\hat{\grad}\hat{\varphi}_3 = \frac{1}{2}
\begin{bmatrix}
1+2\hat{y} \\ 
1+2\hat{x}
\end{bmatrix}.
\end{align*}
We index the two faces of $\hat{K}$ which are perpendicular to $x$-axis by $\hat{e}_0$ and $\hat{e}_1$ and index the two faces which are perpendicular to $y$-axis by $\hat{e}_2$ and $\hat{e}_3$, namely 
\begin{align*}
\hat{e}_0&=\{\hat{x}=-1/2,\, -1/2\leq\hat{y}\leq1/2\}, & 
\hat{e}_1&=\{\hat{x}=1/2,\, -1/2\leq\hat{y}\leq1/2\},\\ 
\hat{e}_2&=\{\hat{y}=-1/2,\, -1/2\leq\hat{x}\leq1/2\}, & 
\hat{e}_3&=\{\hat{y}=1/2,\, -1/2\leq\hat{x}\leq1/2\}.
\end{align*}
Define shift mappings with respect to the faces of the reference element as follows:
\begin{align*}
\hat{\vec{\vartheta}}_0(\hat{\vec{x}}) &= \hat{\vec{x}} + \transpose{[1,0]}\quad\text{if}~\hat{\vec{x}}\in\hat{e}_0,&
\hat{\vec{\vartheta}}_1(\hat{\vec{x}}) &= \hat{\vec{x}} - \transpose{[1,0]}\quad\text{if}~\hat{\vec{x}}\in\hat{e}_1,\\
\hat{\vec{\vartheta}}_2(\hat{\vec{x}}) &= \hat{\vec{x}} + \transpose{[0,1]}\quad\text{if}~\hat{\vec{x}}\in\hat{e}_2,&
\hat{\vec{\vartheta}}_3(\hat{\vec{x}}) &= \hat{\vec{x}} - \transpose{[0,1]}\quad\text{if}~\hat{\vec{x}}\in\hat{e}_3.
\end{align*}
\par
Let us evaluate entries in matrix $\vecc{A}_\mathcal{D}$. We consider the thermally insulating boundary condition $\grad{e}\cdot\normal=0$ on the entire boundary of domain $\Omega$. Enforcing part or entire Dirichlet boundary does not break the M-matrix structure.
Let matrix $\vecc{D}=\mathrm{diag}(\vecc{D}_0,\cdots,\vecc{D}_{\Nel-1})$ be a block diagonal matrix, where each diagonal subblock $\vecc{D}_{i'}\in\IR^{4\times 4}$ is defined by: 
for any $j'$, $j \in\{0,\cdots, 3\}$, the entry at ${j'}^\mathrm{th}$ row and ${j}^\mathrm{th}$ column of $\vecc{D}_{i'}$ is the Gauss--Lobatto integral of the expression 
\begin{align*}
&\int_{K_{i'}} \grad{\varphi_{i'j}}\cdot\grad{\varphi_{i'j'}} 
-\frac{1}{2}\sum_{m=0}^3 \int_{e_m\in \Gamma_h} \grad{\varphi_{i'j}}\cdot\normal_{K_{i'}}\,\varphi_{i'j'} 
+ \frac{\tilde{\sigma}}{h}\sum_{m=0}^3 \int_{e_m\in \Gamma_h} \varphi_{i'j}\varphi_{i'j'}\\
=&\, \int_{\hat{K}} \hat{\grad}\hat{\varphi}_j \cdot \hat{\grad}\hat{\varphi}_{j'}
-\frac{1}{2}\sum_{m=0}^3 \iota_m \int_{\hat{e}_m} \hat{\grad}{\hat{\varphi}_j}\cdot\hat{\normal}_{\hat{K}}\, \hat{\varphi}_{j'}
+ \frac{\tilde{\sigma}}{\sqrt{2}}\sum_{m=0}^3 \iota_m \int_{\hat{e}_m}\hat{\varphi}_j\hat{\varphi}_{j'}.
\end{align*}
In above, $\iota_m$ is an indicator, which equals to $1$, if the face $e_m$ of element $K_{i'}$ is an interior face, and otherwise equals to $0$.
We mark all the diagonal entries of $\vecc{A}_\mathcal{D}$ in red color. The diagonal subblocks of $\vecc{A}_\mathcal{D}$ are: for $i' = 0,\cdots,\Nel-1$, 
\begin{align}\label{eq:IIPG_mat_diag_block}
\vecc{D}_{i'} =
\begin{pmatrix}
\red{1 + (\iota_0+\iota_2)(\frac{\tilde{\sigma}}{2\sqrt{2}}-\frac{1}{4})} & -\frac{1}{2} + \frac{\iota_0}{4} & -\frac{1}{2} + \frac{\iota_2}{4} & 0 \\ 
-\frac{1}{2} + \frac{\iota_1}{4} & \red{1 + (\iota_1+\iota_2)(\frac{\tilde{\sigma}}{2\sqrt{2}}-\frac{1}{4})} & 0 & -\frac{1}{2} + \frac{\iota_2}{4} \\ 
-\frac{1}{2} + \frac{\iota_3}{4} & 0 & \red{1 + (\iota_0+\iota_3)(\frac{\tilde{\sigma}}{2\sqrt{2}}-\frac{1}{4})} & -\frac{1}{2} + \frac{\iota_0}{4} \\ 
0 & -\frac{1}{2} + \frac{\iota_3}{4} & -\frac{1}{2} + \frac{\iota_1}{4} & \red{1 + (\iota_1+\iota_3)(\frac{\tilde{\sigma}}{2\sqrt{2}}-\frac{1}{4})}
\end{pmatrix}.
\end{align}
\par
Before computing the off-diagonal subblocks of $\vecc{A}_\mathcal{D}$, let us take a look at an example of a square domain $\Omega=[0,L]^2$, where $L>0$. For any patition of the domain $\Omega$ with more than $2\times2$ square cells, we divide all cells into three categories: all faces are interior faces; only one face is a boundary face; only two faces are boundary faces. See the blue, green, and red cells in the schematic Figure~\ref{fig:2D_IIPG_matrix}.
Using \eqref{eq:IIPG_mat_diag_block}, we get if all faces of a cell $K_{i'}$ are interior faces, then the associated diagonal subblock
\begin{align*}
\vecc{D}_{i'} =
\begin{pmatrix}
\red{\frac{1}{2} + \frac{\tilde{\sigma}}{\sqrt{2}}} & -\frac{1}{4} & -\frac{1}{4} & 0 \\ 
-\frac{1}{4} & \red{\frac{1}{2} + \frac{\tilde{\sigma}}{\sqrt{2}}} & 0 & -\frac{1}{4} \\ 
-\frac{1}{4} & 0 & \red{\frac{1}{2} + \frac{\tilde{\sigma}}{\sqrt{2}}} & -\frac{1}{4} \\ 
0 & -\frac{1}{4} & -\frac{1}{4} & \red{\frac{1}{2} + \frac{\tilde{\sigma}}{\sqrt{2}}}
\end{pmatrix}.
\end{align*}
If only one face of a cell $K_{i'}$ is a boundary face, then dependents on the boundary face location, the associated diagonal subblock belongs to the following four cases.
\begin{align*}
e_0\!\subset\!\partial{\Omega}\!:\!~
\vecc{D}_{i'} \!&=
{\small\begin{pmatrix}
\red{\frac{3}{4} + \frac{\tilde{\sigma}}{2\sqrt{2}}} & -\frac{1}{2} & -\frac{1}{4} & 0 \\ 
-\frac{1}{4} & \red{\frac{1}{2} + \frac{\tilde{\sigma}}{\sqrt{2}}} & 0 & -\frac{1}{4}  \\ 
-\frac{1}{4} & 0 & \red{\frac{3}{4} + \frac{\tilde{\sigma}}{2\sqrt{2}}} & -\frac{1}{2} \\ 
0 & -\frac{1}{4} & -\frac{1}{4} & \red{\frac{1}{2} + \frac{\tilde{\sigma}}{\sqrt{2}}}
\end{pmatrix}}, &
e_1\!\subset\!\partial{\Omega}\!:\!~
\vecc{D}_{i'} \!&=
{\small\begin{pmatrix}
\red{\frac{1}{2} + \frac{\tilde{\sigma}}{\sqrt{2}}} & -\frac{1}{4} & -\frac{1}{4} & 0  \\ 
-\frac{1}{2} & \red{\frac{3}{4} + \frac{\tilde{\sigma}}{2\sqrt{2}}} & 0 & -\frac{1}{4} \\ 
-\frac{1}{4} & 0 & \red{\frac{1}{2} + \frac{\tilde{\sigma}}{\sqrt{2}}} & -\frac{1}{4}  \\ 
0 & -\frac{1}{4} & -\frac{1}{2} & \red{\frac{3}{4} + \frac{\tilde{\sigma}}{2\sqrt{2}}}
\end{pmatrix}},\\
e_2\!\subset\!\partial{\Omega}\!:\!~
\vecc{D}_{i'} \!&=
{\small\begin{pmatrix}
\red{\frac{3}{4} + \frac{\tilde{\sigma}}{2\sqrt{2}}} & -\frac{1}{4} & -\frac{1}{2} & 0 \\ 
-\frac{1}{4} & \red{\frac{3}{4} + \frac{\tilde{\sigma}}{2\sqrt{2}}} & 0 & -\frac{1}{2} \\ 
-\frac{1}{4} & 0 & \red{\frac{1}{2} + \frac{\tilde{\sigma}}{\sqrt{2}}} & -\frac{1}{4}  \\ 
0 & -\frac{1}{4} & -\frac{1}{4} & \red{\frac{1}{2} + \frac{\tilde{\sigma}}{\sqrt{2}}}
\end{pmatrix}}, &
e_3\!\subset\!\partial{\Omega}\!:\!~
\vecc{D}_{i'} \!&=
{\small\begin{pmatrix}
\red{\frac{1}{2} + \frac{\tilde{\sigma}}{\sqrt{2}}} & -\frac{1}{4} & -\frac{1}{4} & 0  \\ 
-\frac{1}{4} & \red{\frac{1}{2} + \frac{\tilde{\sigma}}{\sqrt{2}}} & 0 & -\frac{1}{4}  \\ 
-\frac{1}{2} & 0 & \red{\frac{3}{4} + \frac{\tilde{\sigma}}{2\sqrt{2}}} & -\frac{1}{4} \\ 
0 & -\frac{1}{2} & -\frac{1}{4} & \red{\frac{3}{4} + \frac{\tilde{\sigma}}{2\sqrt{2}}}
\end{pmatrix}}.
\end{align*}
If only two faces of a cell $K_{i'}$ are boundary faces, then dependents on the boundary face location, the associated diagonal subblock belongs to the following four cases.
\begin{align*}
e_0,e_2\!\subset\!\partial{\Omega}\!:\!~
\vecc{D}_{i'} \!&=
{\small\begin{pmatrix}
\red{1} & -\frac{1}{2} & -\frac{1}{2} & 0 \\ 
-\frac{1}{4} & \red{\frac{3}{4} + \frac{\tilde{\sigma}}{2\sqrt{2}}} & 0 & -\frac{1}{2} \\ 
-\frac{1}{4} & 0 & \red{\frac{3}{4} + \frac{\tilde{\sigma}}{2\sqrt{2}}} & -\frac{1}{2} \\ 
0 & -\frac{1}{4} & -\frac{1}{4} & \red{\frac{1}{2} + \frac{\tilde{\sigma}}{\sqrt{2}}}
\end{pmatrix}}, &
e_1,e_2\!\subset\!\partial{\Omega}\!:\!~
\vecc{D}_{i'} \!&=
{\small\begin{pmatrix}
\red{\frac{3}{4} + \frac{\tilde{\sigma}}{2\sqrt{2}}} & -\frac{1}{4} & -\frac{1}{2} & 0 \\ 
-\frac{1}{2} & \red{1} & 0 & -\frac{1}{2} \\ 
-\frac{1}{4} & 0 & \red{\frac{1}{2} + \frac{\tilde{\sigma}}{\sqrt{2}}} & -\frac{1}{4}  \\ 
0 & -\frac{1}{4} & -\frac{1}{2} & \red{\frac{3}{4} + \frac{\tilde{\sigma}}{2\sqrt{2}}}
\end{pmatrix}},\\
e_0,e_3\!\subset\!\partial{\Omega}\!:\!~
\vecc{D}_{i'} \!&=
{\small\begin{pmatrix}
\red{\frac{3}{4} + \frac{\tilde{\sigma}}{2\sqrt{2}}} & -\frac{1}{2} & -\frac{1}{4} & 0 \\ 
-\frac{1}{4} & \red{\frac{1}{2} + \frac{\tilde{\sigma}}{\sqrt{2}}} & 0 & -\frac{1}{4}  \\ 
-\frac{1}{2} & 0 & \red{1} & -\frac{1}{2} \\ 
0 & -\frac{1}{2} & -\frac{1}{4} & \red{\frac{3}{4} + \frac{\tilde{\sigma}}{2\sqrt{2}}}
\end{pmatrix}}, & 
e_1,e_3\!\subset\!\partial{\Omega}\!:\!~
\vecc{D}_{i'} \!&=
{\small\begin{pmatrix}
\red{\frac{1}{2} + \frac{\tilde{\sigma}}{\sqrt{2}}} & -\frac{1}{4} & -\frac{1}{4} & 0  \\ 
-\frac{1}{2} & \red{\frac{3}{4} + \frac{\tilde{\sigma}}{2\sqrt{2}}} & 0 & -\frac{1}{4} \\ 
-\frac{1}{2} & 0 & \red{\frac{3}{4} + \frac{\tilde{\sigma}}{2\sqrt{2}}} & -\frac{1}{4} \\ 
0 & -\frac{1}{2} & -\frac{1}{2} & \red{1}
\end{pmatrix}}.
\end{align*}
\par
Let matrix $\vecc{F} = \vecc{A}_\mathcal{D} - \vecc{D}$, namely $\vecc{F}$ contains all the off-diagonal subblocks of $\vecc{A}_\mathcal{D}$, where each off-diagonal subblock is associated with integrals on a cell face. To be more accurate, each off-diagonal subblock $\vecc{F}_{i'i}^m\in\IR^{4\times 4}$, where $i'\neq i$ and $K_{i'}\cap K_{i} = e_m$ with $m \in\{0,\cdots,3\}$, is defined by:
for any $j'$, $j \in\{0,\cdots, 3\}$, the entry on ${j'}^\mathrm{th}$ row and ${j}^\mathrm{th}$ column of $\vecc{F}_{i'i}^m$ is the Gauss--Lobatto integral of the expression
\begin{align*}
-\frac{1}{2}\int_{e_m} \grad{\varphi_{ij}}\cdot\normal_{K_{i'}}\,\varphi_{i'j'} 
-\frac{\tilde{\sigma}}{h}\int_{e_m} \varphi_{ij}\varphi_{i'j'} 
= -\frac{1}{2}\int_{\hat{e}_m}\hat{\grad}\hat{\varphi}_j\circ\hat{\vec{\vartheta}}_m\cdot\hat{\normal}_{\hat{K}}\,\hat{\varphi}_{j'}
-\frac{\tilde{\sigma}}{\sqrt{2}}\int_{\hat{e}_m}\hat{\varphi}_j\circ\hat{\vec{\vartheta}}_m\hat{\varphi}_{j'}.
\end{align*}
Therefore, the matrix $\vecc{F}$ only contains the following four types of non-zero off-diagonal subblocks, namely when $i'\neq i$ and $K_{i'}\cap K_{i} \neq \emptyset$, 
\begin{align*}
\vecc{F}_{i'i}^0 &=
\begin{pmatrix}
-\frac{1}{4} & \frac{1}{4}-\frac{\tilde{\sigma}}{2\sqrt{2}} & 0 & 0\\ 
0 & 0 & 0 & 0\\ 
0 & 0 & -\frac{1}{4} & \frac{1}{4}-\frac{\tilde{\sigma}}{2\sqrt{2}}\\ 
0 & 0 & 0 & 0\\
\end{pmatrix}, &
\vecc{F}_{i'i}^1 &=
\begin{pmatrix}
0 & 0 & 0 & 0\\ 
\frac{1}{4}-\frac{\tilde{\sigma}}{2\sqrt{2}} & -\frac{1}{4} & 0 & 0\\ 
0 & 0 & 0 & 0\\ 
0 & 0 & \frac{1}{4}-\frac{\tilde{\sigma}}{2\sqrt{2}} & -\frac{1}{4}
\end{pmatrix},\\
\vecc{F}_{i'i}^2 &=
\begin{pmatrix}
-\frac{1}{4} & 0 & \frac{1}{4}-\frac{\tilde{\sigma}}{2\sqrt{2}} & 0\\ 
0 & -\frac{1}{4} & 0 & \frac{1}{4}-\frac{\tilde{\sigma}}{2\sqrt{2}}\\ 
0 & 0 & 0 & 0\\ 
0 & 0 & 0 & 0
\end{pmatrix}, &
\vecc{F}_{i'i}^3 &=
\begin{pmatrix}
0 & 0 & 0 & 0\\ 
0 & 0 & 0 & 0\\  
\frac{1}{4}-\frac{\tilde{\sigma}}{2\sqrt{2}} & 0 & -\frac{1}{4} & 0\\ 
0 & \frac{1}{4}-\frac{\tilde{\sigma}}{2\sqrt{2}} & 0 & -\frac{1}{4}
\end{pmatrix}.
\end{align*}
Obviously, when the penalty parameter $\tilde{\sigma}>\frac{\sqrt{2}}{2}$, the diagonal entries of $\vecc{A}_\mathcal{D}$ are positive. All the off-diagonal entries of $\vecc{A}_\mathcal{D}$ are non-positive. The row sum of $\vecc{A}_\mathcal{D}$ equals zero.
In addition, since the Lagrange bases are numerically orthogonal with respect to the Gauss--Lobatto quadrature, the mass matrix is diagonal with positive diagonal entries $[\vecc{A}_\mathcal{M}]_{ij;ij} = \Delta{x}^2 \hat{\omega}_j \rho_{ij}^\mathrm{P}$. Thus the row sum of matrix $\vecc{A}_{\mathcal{M}} + \frac{\Delta t \lambda}{\Rey}\vecc{A}_\mathcal{D}$ is positive.
Above all, by Lemma~\ref{thm:M_matrix2}, the system matrix $\vecc{A}_{\mathcal{M}} + \frac{\Delta t \lambda}{\Rey}\vecc{A}_{\mathcal{D}}$ is a non-singular M-matrix, therefore is monotone.
Here, we highlight our system matrix holds the M-matrix structure unconditionally.
\begin{figure}[ht!]
\begin{center}
\begin{tabularx}{0.85\linewidth}{@{}C@{}C@{~~}C@{}}
\includegraphics[width=0.2\textwidth]{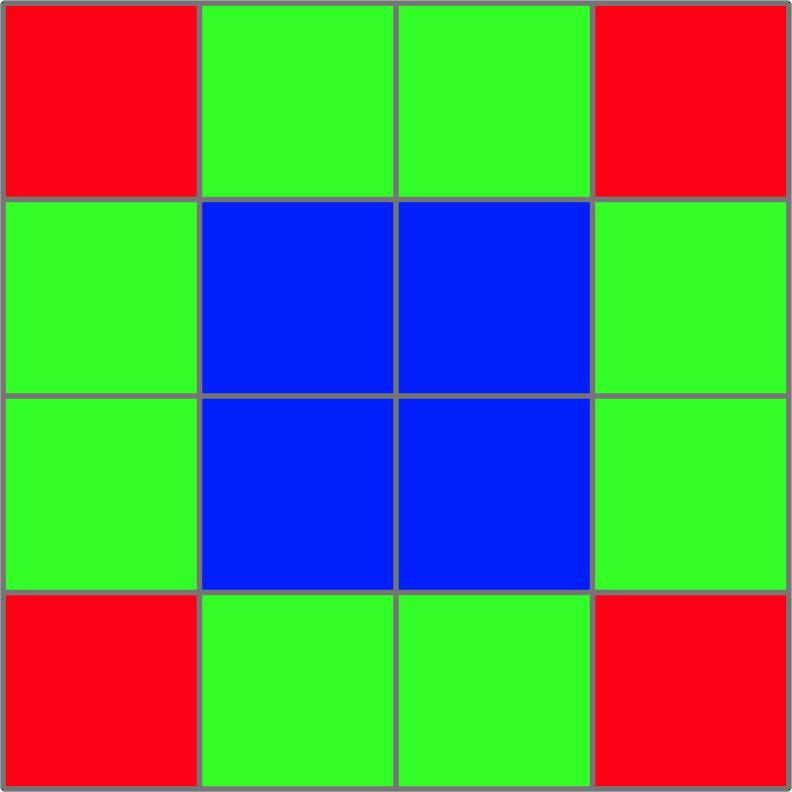} &
\includegraphics[width=0.2\textwidth]{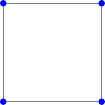} &
\includegraphics[width=0.25\textwidth]{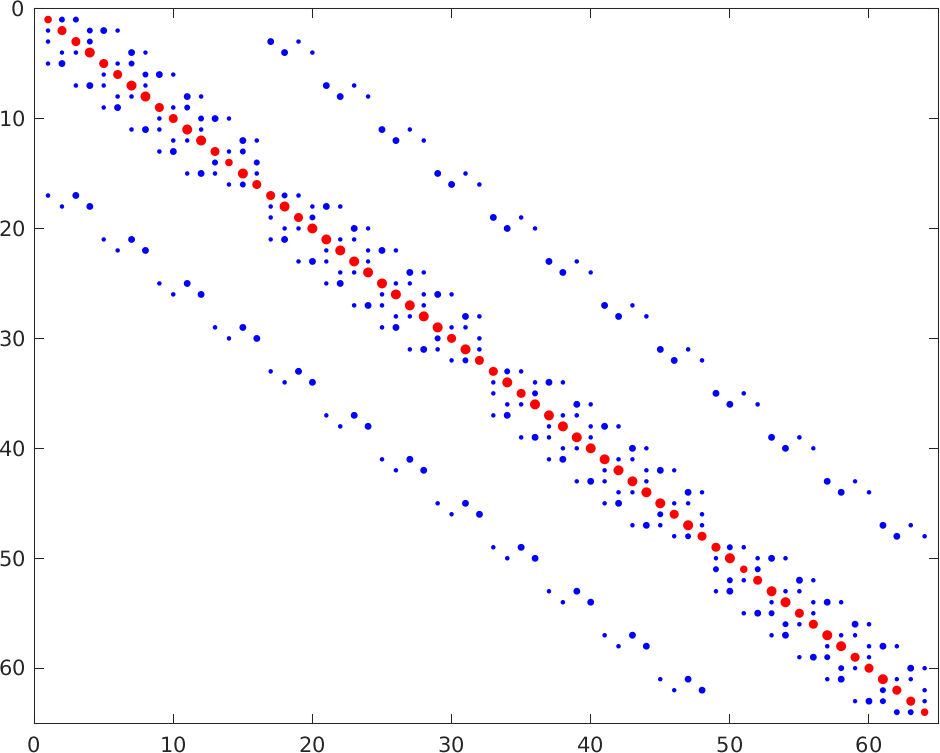}\\
\end{tabularx}
\caption{A schematic graph of the domain partition, quadrature, and the M-matrix structure of $\vecc{A}_\mathcal{D}$. Left: a $4\times4$ mesh of domain $[0,1]^2$. The cells with zero, one, and two boundary faces are marked in blue, green, and red. Middle: $2^2$-point Gauss--Lobatto quadrature used in $\IQ^1$ scheme for computing integrals in parabolic subproblem. Right: sparsity pattern of $\vecc{A}_\mathcal{D}$ associated with a $4\times4$ mesh of the domain $[0,1]^2$. The positive and negative entries are plotted by red and blue dots.}
\label{fig:2D_IIPG_matrix}
\end{center}
\end{figure}

%% If you have bibdatabase file and want bibtex to generate the
%% bibitems, please use
%%
%%  \bibliographystyle{elsarticle-num} 
%%  \bibliography{<your bibdatabase>}
%\section*{References}
\bibliographystyle{elsarticle-num}
\bibliography{bibliography} % Load bibliography.bib.

\begin{thebibliography}{10}
\expandafter\ifx\csname url\endcsname\relax
  \def\url#1{\texttt{#1}}\fi
\expandafter\ifx\csname urlprefix\endcsname\relax\def\urlprefix{URL }\fi
\expandafter\ifx\csname href\endcsname\relax
  \def\href#1#2{#2} \def\path#1{#1}\fi

\bibitem{zhang2010positivity}
X.~Zhang, C.-W. Shu, On positivity-preserving high order discontinuous
  {G}alerkin schemes for compressible {E}uler equations on rectangular meshes,
  Journal of Computational Physics 229~(23) (2010) 8918--8934.

\bibitem{LAI2022555}
N.-A. Lai, C.~Liu, A.~Tarfulea,
  \href{https://www.sciencedirect.com/science/article/pii/S0022039622005034}{Positivity
  of temperature for some non-isothermal fluid models}, Journal of Differential
  Equations 339 (2022) 555--578.
\newblock \href {http://dx.doi.org/https://doi.org/10.1016/j.jde.2022.08.025}
  {\path{doi:https://doi.org/10.1016/j.jde.2022.08.025}}.
\newline\urlprefix\url{https://www.sciencedirect.com/science/article/pii/S0022039622005034}

\bibitem{liu2021brinkman}
C.~Liu, J.-E. Sulzbach, The brinkman-fourier system with ideal gas equilibrium,
  Discrete and Continuous Dynamical Systems 42~(1) (2021) 425--462.

\bibitem{grapsas2016unconditionally}
D.~Grapsas, R.~Herbin, W.~Kheriji, J.-C. Latch{\'e}, An unconditionally stable
  staggered pressure correction scheme for the compressible {N}avier--{S}tokes
  equations, The SMAI journal of computational mathematics 2 (2016) 51--97.

\bibitem{zhang2017positivity}
X.~Zhang, On positivity-preserving high order discontinuous {G}alerkin schemes
  for compressible {N}avier--{S}tokes equations, Journal of Computational
  Physics 328 (2017) 301--343.

\bibitem{zhang2010maximum}
X.~Zhang, C.-W. Shu, On maximum-principle-satisfying high order schemes for
  scalar conservation laws, Journal of Computational Physics 229~(9) (2010)
  3091--3120.

\bibitem{zhang2012maximum}
X.~Zhang, Y.~Xia, C.-W. Shu, Maximum-principle-satisfying and
  positivity-preserving high order discontinuous {G}alerkin schemes for
  conservation laws on triangular meshes, Journal of Scientific Computing
  50~(1) (2012) 29--62.

\bibitem{fan2021positivity}
C.~Fan, X.~Zhang, J.~Qiu, {Positivity-preserving high order finite volume
  hybrid Hermite WENO schemes for compressible Navier-Stokes equations},
  Journal of Computational Physics 445 (2021) 110596.

\bibitem{fan2022positivity}
C.~Fan, X.~Zhang, J.~Qiu, {Positivity-preserving high order finite difference
  WENO schemes for compressible Navier-Stokes equations}, Journal of
  Computational Physics 467 (2022) 111446.

\bibitem{guermond2021second}
J.-L. Guermond, M.~Maier, B.~Popov, I.~Tomas, Second-order invariant domain
  preserving approximation of the compressible {N}avier--{S}tokes equations,
  Computer Methods in Applied Mechanics and Engineering 375 (2021) 113608.

\bibitem{demkowicz1990new}
L.~Demkowicz, J.~Oden, W.~Rachowicz, A new finite element method for solving
  compressible {N}avier--{S}tokes equations based on an operator splitting
  method and hp adaptivity, Computer methods in applied mechanics and
  engineering 84~(3) (1990) 275--326.

\bibitem{zhang2012maximum-liu}
X.~Zhang, Y.~Liu, C.-W. Shu, Maximum-principle-satisfying high order finite
  volume weighted essentially nonoscillatory schemes for convection-diffusion
  equations, SIAM Journal on Scientific Computing 34~(2) (2012) A627--A658.

\bibitem{yan2014maximum}
J.~Yan, Maximum principle satisfying direct discontinuous {G}alerkin method and
  its variation for convection diffusion equations, Mathematics of Computation.

\bibitem{chen2016third}
Z.~Chen, H.~Huang, J.~Yan, Third order maximum-principle-satisfying direct
  discontinuous {G}alerkin methods for time dependent convection diffusion
  equations on unstructured triangular meshes, Journal of Computational Physics
  308 (2016) 198--217.

\bibitem{srinivasan2018positivity}
S.~Srinivasan, J.~Poggie, X.~Zhang, {A positivity-preserving high order
  discontinuous Galerkin scheme for convection--diffusion equations}, Journal
  of Computational Physics 366 (2018) 120--143.

\bibitem{sun2018discontinuous}
Z.~Sun, J.~A. Carrillo, C.-W. Shu, {A discontinuous Galerkin method for
  nonlinear parabolic equations and gradient flow problems with interaction
  potentials}, Journal of Computational Physics 352 (2018) 76--104.

\bibitem{li2018high}
H.~Li, S.~Xie, X.~Zhang, A high order accurate bound-preserving compact finite
  difference scheme for scalar convection diffusion equations, SIAM Journal on
  Numerical Analysis 56~(6) (2018) 3308--3345.

\bibitem{plemmons1977m}
R.~J. Plemmons, {M}-matrix characterizations. {I}--nonsingular {M}-matrices,
  Linear Algebra and its Applications 18~(2) (1977) 175--188.

\bibitem{hohn1981some}
W.~H{\"o}hn, H.~D. Mittelmann, Some remarks on the discrete maximum-principle
  for finite elements of higher order, Computing 27~(2) (1981) 145--154.

\bibitem{li2020monotonicity}
H.~Li, X.~Zhang, On the monotonicity and discrete maximum principle of the
  finite difference implementation of ${C}^{0}$-${Q}^{2}$ finite element
  method, Numerische Mathematik 145~(2) (2020) 437--472.

\bibitem{cross2020monotonicity}
L.~Cross, X.~Zhang, On the monotonicity of high order discrete {L}aplacian,
  arXiv preprint arXiv:2010.07282.

\bibitem{bassi1997high}
F.~Bassi, S.~Rebay, {A high-order accurate discontinuous finite element method
  for the numerical solution of the compressible Navier--Stokes equations},
  Journal of computational physics 131~(2) (1997) 267--279.

\bibitem{cockburn1998local}
B.~Cockburn, C.-W. Shu, The local discontinuous {G}alerkin method for
  time-dependent convection-diffusion systems, SIAM journal on numerical
  analysis 35~(6) (1998) 2440--2463.

\bibitem{castillo2000priori}
P.~Castillo, B.~Cockburn, I.~Perugia, D.~Sch{\"o}tzau, An a priori error
  analysis of the local discontinuous {G}alerkin method for elliptic problems,
  SIAM Journal on Numerical Analysis 38~(5) (2000) 1676--1706.

\bibitem{peraire2008compact}
J.~Peraire, P.-O. Persson, The compact discontinuous {G}alerkin ({CDG}) method
  for elliptic problems, SIAM Journal on Scientific Computing 30~(4) (2008)
  1806--1824.

\bibitem{uranga2009implicit}
A.~Uranga, P.-O. Persson, M.~Drela, J.~Peraire, Implicit large eddy simulation
  of transitional flows over airfoils and wings, in: 19th AIAA Computational
  Fluid Dynamics, American Institute of Aeronautics and Astronautics, Inc.,
  2009, p. 4131.

\bibitem{liu2010direct}
H.~Liu, J.~Yan, The direct discontinuous {G}alerkin ({DDG}) method for
  diffusion with interface corrections, Communications in Computational Physics
  8~(3) (2010) 541.

\bibitem{zhang2012fourier}
M.~Zhang, J.~Yan, Fourier type error analysis of the direct discontinuous
  {G}alerkin method and its variations for diffusion equations, Journal of
  Scientific Computing 52~(3) (2012) 638--655.

\bibitem{liu2015optimal}
H.~Liu, Optimal error estimates of the direct discontinuous {G}alerkin method
  for convection-diffusion equations, Mathematics of computation 84~(295)
  (2015) 2263--2295.

\bibitem{cockburn2009hybridizable}
B.~Cockburn, B.~Dong, J.~Guzm{\'a}n, M.~Restelli, R.~Sacco, {A hybridizable
  discontinuous Galerkin method for steady-state convection-diffusion-reaction
  problems}, SIAM Journal on Scientific Computing 31~(5) (2009) 3827--3846.

\bibitem{peraire2010hybridizable}
J.~Peraire, N.~Nguyen, B.~Cockburn, A hybridizable discontinuous {G}alerkin
  method for the compressible {E}uler and {N}avier--{S}tokes equations, in:
  48th AIAA aerospace sciences meeting including the new horizons forum and
  aerospace exposition, 2010, p. 363.

\bibitem{nguyen2011implicit}
N.~C. Nguyen, J.~Peraire, B.~Cockburn, An implicit high-order hybridizable
  discontinuous {G}alerkin method for the incompressible {N}avier--{S}tokes
  equations, Journal of Computational Physics 230~(4) (2011) 1147--1170.

\bibitem{Rivierebook}
B.~Riviere, Discontinuous Galerkin Methods for Solving Elliptic and Parabolic
  Equations: Theory and Implementation, Frontiers in Applied Mathematics,
  Society for Industrial and Applied Mathematics, 2008.

\bibitem{riviere1999improved}
B.~Rivi{\`e}re, M.~F. Wheeler, V.~Girault, Improved energy estimates for
  interior penalty, constrained and discontinuous {G}alerkin methods for
  elliptic problems. {P}art {I}, Computational Geosciences 3~(3) (1999)
  337--360.

\bibitem{riviere2001priori}
B.~Rivi{\`e}re, M.~F. Wheeler, V.~Girault, A priori error estimates for finite
  element methods based on discontinuous approximation spaces for elliptic
  problems, SIAM Journal on Numerical Analysis 39~(3) (2001) 902--931.

\bibitem{masri2022discontinuous}
R.~Masri, C.~Liu, B.~Riviere, A discontinuous {G}alerkin pressure correction
  scheme for the incompressible {N}avier--{S}tokes equations: {S}tability and
  convergence, Mathematics of Computation 91~(336) (2022) 1625--1654.

\bibitem{wang2013weak}
J.~Wang, X.~Ye, A weak {G}alerkin finite element method for second-order
  elliptic problems, Journal of Computational and Applied Mathematics 241
  (2013) 103--115.

\bibitem{wang2016weak}
J.~Wang, X.~Ye, A weak {G}alerkin finite element method for the {S}tokes
  equations, Advances in Computational Mathematics 42~(1) (2016) 155--174.

\bibitem{arnold2002unified}
D.~N. Arnold, F.~Brezzi, B.~Cockburn, L.~D. Marini, Unified analysis of
  discontinuous {G}alerkin methods for elliptic problems, SIAM journal on
  numerical analysis 39~(5) (2002) 1749--1779.

\bibitem{shu2014discontinuous}
C.-W. Shu, Discontinuous {G}alerkin method for time-dependent problems: survey
  and recent developments, Recent developments in discontinuous {G}alerkin
  finite element methods for partial differential equations (2014) 25--62.

\bibitem{hu2021positivity}
J.~Hu, X.~Zhang, {Positivity-preserving and energy-dissipative finite
  difference schemes for the Fokker–Planck and Keller–Segel equations}, IMA
  Journal of Numerical AnalysisDrac014.
\newblock \href {http://dx.doi.org/10.1093/imanum/drac014}
  {\path{doi:10.1093/imanum/drac014}}.

\bibitem{shen2021discrete}
J.~Shen, X.~Zhang, Discrete maximum principle of a high order finite difference
  scheme for a generalized {A}llen--{C}ahn equation, Communications in
  Mathematical Sciences 20~(5) (2022) 1409--1436.

\bibitem{liu-2022-monotone}
C.~Liu, Y.~Gao, X.~Zhang, Structure preserving schemes for {F}okker-{P}lanck
  equations of irreversible processes, arXiv preprint arXiv:2210.16628.

\bibitem{shu1988total}
C.-W. Shu, Total-variation-diminishing time discretizations, SIAM Journal on
  Scientific and Statistical Computing 9~(6) (1988) 1073--1084.

\bibitem{zhang2012minimum}
X.~Zhang, C.-W. Shu, A minimum entropy principle of high order schemes for gas
  dynamics equations, Numerische Mathematik 121~(3) (2012) 545--563.

\bibitem{zhang2011positivity}
X.~Zhang, C.-W. Shu, Positivity-preserving high order discontinuous {G}alerkin
  schemes for compressible {E}uler equations with source terms, Journal of
  Computational Physics 230~(4) (2011) 1238--1248.

\bibitem{xing2010positivity}
Y.~Xing, X.~Zhang, C.-W. Shu, Positivity-preserving high order well-balanced
  discontinuous {G}alerkin methods for the shallow water equations, Advances in
  Water Resources 33~(12) (2010) 1476--1493.

\bibitem{wang2012robust}
C.~Wang, X.~Zhang, C.-W. Shu, J.~Ning, Robust high order discontinuous
  {G}alerkin schemes for two-dimensional gaseous detonations, Journal of
  Computational Physics 231~(2) (2012) 653--665.

\bibitem{xu2017bound}
Z.~Xu, X.~Zhang, Bound-preserving high-order schemes, in: Handbook of Numerical
  Analysis, Vol.~18, Elsevier, 2017, pp. 81--102.

\bibitem{maday1990optimal}
Y.~Maday, E.~M. R{\o}nquist, Optimal error analysis of spectral methods with
  emphasis on non-constant coefficients and deformed geometries, Computer
  Methods in Applied Mechanics and Engineering 80~(1-3) (1990) 91--115.

\bibitem{qiu2005runge}
J.~Qiu, C.-W. Shu, {R}unge--{K}utta discontinuous {G}alerkin method using
  {WENO} limiters, SIAM Journal on Scientific Computing 26~(3) (2005) 907--929.

\bibitem{zhong2013simple}
X.~Zhong, C.-W. Shu, A simple weighted essentially nonoscillatory limiter for
  {R}unge--{K}utta discontinuous {G}alerkin methods, Journal of Computational
  Physics 232~(1) (2013) 397--415.

\bibitem{zhu2013runge}
J.~Zhu, X.~Zhong, C.-W. Shu, J.~Qiu, {R}unge--{K}utta discontinuous {G}alerkin
  method using a new type of {WENO} limiters on unstructured meshes, Journal of
  Computational Physics 248 (2013) 200--220.

\bibitem{woodward1984numerical}
P.~Woodward, P.~Colella, The numerical simulation of two-dimensional fluid flow
  with strong shocks, Journal of computational physics 54~(1) (1984) 115--173.

\bibitem{cockburn1998runge}
B.~Cockburn, C.-W. Shu, The {R}unge--{K}utta discontinuous {G}alerkin method
  for conservation laws {V}: multidimensional systems, Journal of Computational
  Physics 141~(2) (1998) 199--224.

\bibitem{horvath2013discrete}
T.~L. Horv{\'a}th, M.~E. Mincsovics, Discrete maximum principle for interior
  penalty discontinuous {G}alerkin methods, Central European Journal of
  Mathematics 11~(4) (2013) 664--679.

\end{thebibliography}

\end{document}